\theoremstyle{plain}
\newtheorem{thm}{Theorem}
\newtheorem{cor}{Corollary}
\newtheorem{prop}{Proposition}
\newtheorem{countex}{Counter example} 
\newtheorem{lemma}{Lemma} 
\theoremstyle{definition}
\newtheorem{defn}{Definition}
\newtheorem{remark}{Remark}
\newtheorem{heuristic}{Heuristic}
\newcommand{\norm}[1]{\lVert #1 \rVert}
\newcommand{\diag}{\mathrm{diag}}
\newcommand{\abs}[1]{\lvert #1 \rvert}
\newcommand{\bigO}{\text{\large O}}
\newcommand{\D}{\mathrm{D}}
\newcommand{\C}{\mathcal{C}}
\newcommand{\NEW}[1]{{\em #1}}
\title{Perron vector optimization applied to search engines}
\author{Olivier Fercoq \footnotemark[1] \footnotemark[2]
}
\author{Olivier Fercoq\corref{1}\corref{2}}%
\begin{document}

\ifthenelse{\boolean{arxiv}}
{
 \footnotetext[1]{INRIA Saclay and CMAP Ecole Polytechnique\\
 {\tt\small olivier.fercoq@inria.fr}}%
 \footnotetext[2]{The doctoral work of the author is supported by Orange Labs through the research contract CRE~3795 with INRIA}%

\maketitle
}
{}

\begin{abstract}
In the last years, Google's PageRank optimization problems have been extensively studied. In that case, the ranking is given by the invariant measure of a stochastic matrix. In this paper, we consider the more general situation in which the ranking is determined by the Perron eigenvector of a nonnegative, but not necessarily stochastic, matrix, in order to cover Kleinberg's HITS algorithm. We also give some results for Tomlin's HOTS algorithm. The problem consists then in finding an optimal outlink strategy subject to design constraints and for a given search engine.

We study the relaxed versions of these problems, which means that we should accept weighted hyperlinks. We provide an efficient algorithm for the computation of the matrix of partial derivatives of the criterion, that uses the low rank property of this matrix. We give a scalable algorithm that couples gradient and power iterations and gives a local minimum of the Perron vector optimization problem. We prove convergence by considering it as an approximate gradient method.

We then show that optimal linkage stategies of HITS and HOTS optimization problems verify a threshold property. We report numerical results on fragments of the real web graph for these search engine optimization problems.
\end{abstract}

\ifthenelse{\boolean{arxiv}}
{}
{
\maketitle
}

\section{Introduction}

\paragraph{Motivation}

Internet search engines use a variety of algorithms to sort web pages based on their text
content or
on the hyperlink structure of the web.
In this paper, we focus on algorithms that use the latter hyperlink structure, 
called link-based algorithms. The basic notion for all these algorithms is the web graph,
which is a digraph with a node for each web page and an arc between pages~$i$ 
and~$j$ if there is a~hyperlink from page~$i$ to page~$j$. Famous link-based algorithms are PageRank~\cite{Brin-Anatomy}, HITS~\cite{Kleinberg-HITS}, SALSA~\cite{Lempel-Salsa} and HOTS~\cite{Tomlin-HOTS}.
See also~\cite{LangvilleMeyer-SurveyEigenvectorMethods, LanMey-Beyond} for surveys on these algorithms.
The main problem of this paper is the optimization of the ranking of a given web site.
It consists in finding an optimal outlink
strategy maximizing a given ranking subject to design constraints.

One of the main ranking methods relies on the PageRank introduced by Brin and Page~\cite{Brin-Anatomy}.
It is defined as the invariant measure of a~walk made by a~random surfer
on the web graph. When reading a~given page, the surfer either
selects a~link from the current page (with a~uniform probability),
and moves to the page pointed by that link, or interrupts his current search,
and then moves to an~arbitrary page, which is selected
according to given ``zapping'' probabilities.
The rank of a~page is defined as its frequency of visit
by the random surfer. It is interpreted as the ``popularity'' of the page.
The PageRank optimization problem has been studied in several works:
\cite{AvrLit-OptStrat, Viennot-2006, NinKer-PRopt, IshiiTempo-FragileDataPRcomp, Blondel-PRopt, Fercoq-PRopt}.
The last two papers showed that PageRank optimization problems have a Markov decision process structure and both papers provided efficient algorithm that converge to a global optimum.
Cs\'aji, Jungers and Blondel in~\cite{Blondel-PRopt} showed that optimizing the PageRank score of a single web page
is a polynomial problem. Fercoq, Akian, Bouhtou and Gaubert in~\cite{Fercoq-PRopt} 
gave an alternative Markov decision process model and an efficient algorithm for the PageRank optimization problem 
with linear utility functions and more general design constraints, showing in particular that
any linear function of the PageRank vector can be optimized in polynomial time.

In this paper, we consider the more general situation in which the
ranking is determined
by the Perron eigenvector of a nonnegative, but not necessarily
stochastic, matrix. 
The Perron-Frobenius theorem (see~\cite{BermanPlemmons-nonnegMat} for instance) states that 
any nonnegative matrix $A$ has a nonnegative principal eigenvalue called the Perron root and nonnegative principal eigenvectors.
If, in addition, $A$ is irreducible, then the Perron root is simple and the (unique up to a multiplicative constant)
nonnegative eigenvector, called the Perron vector, has only positive entries.
This property makes it a good candidate to sort web pages. 
The ranking algorithms considered differ in the way of constructing from the web graph 
a nonnegative irreducible
matrix from which we determine the Perron vector. 
Then, the bigger is the Perron vector's coordinate corresponding
to a web page, the higher this web page is in the ranking.
In~\cite{Keener-FootbalTeams}, such a ranking
is proposed for football teams. The paper~\cite{Saaty-PerronRank} uses the Perron vector
to rank teachers from pairwise comparisons. 
See also~\cite{Vigna-spectralRanking} for a survey on the subject. When it comes to web page ranking,
the PageRank is the Perron eigenvector of the transition matrix described above
but the HITS and SALSA algorithms also rank pages according to a Perron vector.

The HITS algorithm~\cite{Kleinberg-HITS} is not purely a link-based algorithm.
It is composed of two steps and the output depends on the query of the user.
Given a query, we first select a seed of pages that are relevant to the query according to
their text content. This seed is then extended with pages linking to them, pages to which they link
and all the hyperlinks between the pages selected.
We thus obtain a subgraph of the web graph focused on the query.
Then, the second step assigns each page two scores: a hub score $v$ and an authority score $u$ such that
good hubs should point to good authorities and good authorities should be pointed to by good hubs.
Introducing the adjacency matrix $A$ of the focused graph,
this can be written as $v=\rho Au$ and $u=\rho A^T v$ with $\rho \in \mathbb{R}_{+}$,
which means that the vector of hub scores is the Perron eigenvector of the matrices $A^T A$
and that the vector of authority scores is the Perron eigenvector of $AA^T$.
The construction of HITS' focused subgraph is a combination of text content relevancy with the query
and of hyperlink considerations. Maximizing the probability of appearance of a web page on this subgraph
is thus a composite problem out of the range of this paper. We shall however
study the optimization of HITS authority, for a given focused subgraph.

The SALSA algorithm~\cite{Lempel-Salsa} shares the same first step as HITS.
The second step consists in the computation of the invariant measure
of a stochastic matrix which consists in a normalization of the rows of $A^T A$.
In fact, with natural assumptions, this measure is proportionnal to the indegree of the web page. The authors
show that the interest of the ranking algorithm lies in the combination of the
two steps and not in one or the other alone. Thus from a hyperlink point of view,
optimizing the rank in SALSA simply consists in maximizing the number of hyperlinks
pointing to the target page. We shall not study SALSA optimization any further.

We also studied the optimization of Tomlin's HOTS scores~\cite{Tomlin-HOTS}.
In this case, the ranking is the vector of dual variables of an optimal flow problem. The flow
represents an optimal distribution of web surfers on the web graph in the sense of
entropy minimization. The dual variable, one by page, is interpreted as the ``temperature'' of
the page, the hotter a page the better.
Tomlin showed that this vector is solution of a nonlinear fix point equation: it may be seen
as a nonlinear eigenvector. Indeed, we show that most of the arguments available in the case
of Perron vector optimization can be adapted to HOTS optimization.
We think that this supports Tomlin's remark that
''malicious manipulation of the dual values of a large scale
nonlinear network optimization model [\ldots] would be an interesting topic``.

\paragraph{Contribution}

In this paper, we study the problem of optimizing the Perron eigenvector
of a controlled matrix and apply it to PageRank, HITS and HOTS optimization.
Our first main result is the development of a scalable algorithm
for the local optimization of a scalar function of the Perron eigenvector over a set of nonnegative irreducible matrices.
Indeed, the global Perron vector optimization over a convex set of nonnegative matrices is
NP-hard, so we focus on the searching of local optima.
We give in Theorem~\ref{thm:fastDer} a power-type algorithm for the computation of the matrix of the partial
derivatives of the objective, based on the fact
that it is a rank 1 matrix. This theorem shows that computing the partial derivatives of
the objective has the same computational cost as computing the Perron vector by the power method,
which is the usual method when dealing with the large and sparse matrices built from the web graph.
Then we give an optimization algorithm that couples power and gradient iterations (Algorithms~\ref{alg:ArmijoApprox} and~\ref{alg:Master}).
Each step of the optimization algorithm involves a suitable number of power
iterations and a descent step. By considering this algorithm to be an approximate
projected gradient algorithm~\cite{Polak-consistent, Pironneau-ApproxGrad}, we prove that the algorithm converges
to a stationary point (Theorem~\ref{thm:approxGrad}). Compared with
the case when the number of power iterations is not adapted dynamically, we 
got a speedup between 3 and 20 in our numerical experiments (Section~\ref{sec:num})
together with a more precise convergence result.

Our second main result is the application of Perron vector optimization to the optimization
of scalar functions of HITS authority or HOTS scores.
We derive optimization algorithms and, 
thanks to the low rank of the matrix of partial derivatives, we show that the
optimal linkage strategies of both problems satisfy a threshold property (Propositions~\ref{prop:shapeHits} and~\ref{prop:HotsOptStrats}).
This property was already proved for PageRank optimization in~\cite{NinKer-PRopt, Fercoq-PRopt}.
As in~\cite{IshiiTempo-FragileDataPRcomp, Blondel-PRopt, Fercoq-PRopt}
we partition the set of potential links $(i,j)$ into three subsets, consisting respectively of the set of \NEW{obligatory links}, the set of
\NEW{prohibited links} and the set of \NEW{facultative links}.
When choosing a subset of the facultative links, we get
a graph from which we get any of the three ranking vectors.
We are then looking for the subset of facultative links
that maximizes a given utility function.
We also study the associated relaxed problems, where we accept
weighted adjacency matrices. This assumes that the webmaster can influence
the importance of the hyperlinks 
of the pages she controls, for instance by choosing the size of the font, 
the color or the position of the link within the page.
In fact, we shall solve the relaxed problems
and then give conditions or heuristics to get an admissible
strategy for the discrete problems.

\paragraph{Related works}

As explained in the first part of the introduction, 
this paper extends the study of PageRank optimization developped
in~\cite{AvrLit-OptStrat, Viennot-2006, NinKer-PRopt, IshiiTempo-FragileDataPRcomp, Blondel-PRopt, Fercoq-PRopt}
to HITS authority~\cite{Kleinberg-HITS} and HOTS~\cite{Tomlin-HOTS} optimization.

We based our study of Perron eigenvector optimization on two other domains: eigenvalue optimization and
eigenvector sensitivity.
There is a vast literature on eigenvalue and eigenvector sensitivity with many domains of application 
(see the survey~\cite{Hafta-sensitivity} for instance).
These works cope with perturbations of a given system. They consider
general square matrices and any eigenvalue or eigenvector. 
They give the directional derivatives of the eigenvalue and eigenvector of a matrix
with respect to a given perturbation of this matrix~\cite{Nelson-eigenvectorDer, Meyer-eigenvectorDer}. 
Perron eigenvalue and eigenvector sensitivity was
developped in~\cite{DeuNeu-PerronRootDer, DeuNeu-PerronVectorDer}.

This led to the development of eigenvalue optimization.
In~\cite{cullum-eigenValueOpt, Overton-largeScale, shapiro-eigenValueOpt} 
the authors show that the minimization of a convex 
function of the eigenvalues of symmetric matrices subject to linear constraints is a convex problem
and can be solved with semi-definite programming.
Eigenvalue optimization of nonsymmetric matrices is a more difficult problem.
In general, the eigenvalue is a nonconvex nonlipschitz function of the entries of the matrix.
The last section of~\cite{LewisOverton-eigenvalueOpt} proposes
a method to reduce the nonsymmetric case to the symmetric case 
by adding (many) additional variables.
Another approach is developped in~\cite{Overton-nonsymEigenValueOpt}:
the author derives descent directions and optimality conditions from 
the perturbation theory and uses so-called dual matrices.

In the context of population dynamics, the problem of the 
maximization of the growth rate of a population can
be modeled by the maximization of the Perron value 
of a given family of matrices.
This technique is used in~\cite{Logofet-calibration}
to identify the parameters of a population dynamic model, in~\cite{BiCFerGLOu-short} for
chemotherapy optimization purposes.
An approach based on branching Markov decision processes is presented in~\cite{Rothblum1982}. 
Perron value optimization also appears in other contexts
like in the minimization of the interferences in a network~\cite{Boche-perronvalueOpt}.

Apart from the stochastic case which can be solved by Markov decision techniques, like for PageRank, 
the search for a matrix with optimal eigenvectors does not seem to
have been much considered in the literature.
Indeed, the problem is not well defined since when an eigenvalue is not simple,
the associated eigenspace may have dimension greater than 1.

\paragraph{Organization}

The paper is organized as follows. In Section~\ref{sec:NPhard}, we introduce Perron eigenvector and eigenvalue optimization problems
and show that these problems 
 are NP-hard problems on convex sets of matrices. We also
point out some problems solvable in polynomial time.
In Section~\ref{sec:fastDer}, we give in Theorem~\ref{thm:fastDer} a~method for the efficient computation
of the derivative of objective function.
Then in Section~\ref{sec:approxGradient}, we give the coupled power and gradient iterations
and its convergence proof.
In Sections~\ref{sec:HITS} and \ref{sec:HOTS}, we show how HITS authority optimization
problems and HOTS optimization problems reduce to Perron vector optimization.
Finally, we report numerical results on a fragment of the real web graph in Section~\ref{sec:num}.

\section{Perron vector and Perron value optimization problems} \label{sec:NPhard}

Let $M \in \mathbb{R}^{n \times n}$ be a (elementwise) nonnegative matrix. We say that $M$ is irreducible if
it is not similar to a block upper triangular matrix with two blocks via a permutation.
Equivalently, define the directed graph with $n$ nodes and an edge between node $i$ and $j$ if
and only if $M_{i,j}>0$: $M$ is irreducible if and only if this graph is strongly connected.

We denote by $\rho(M)$ the principal eigenvalue of the 
irreducible nonnegative matrix $M$, called the Perron root.
By Perron-Frobenius theorem (see~\cite{BermanPlemmons-nonnegMat} for instance), we know that
$\rho(M)>0$ and that this eigenvalue is simple.
Given a normalization $N$, we denote by $u(M)$ 
the corresponding normalized eigenvector, called the Perron vector.
The normalization is necessary since the Perron vector is only defined
up to positive multiplicative constant. The normalization function $N$
should be homogeneous and we require $u(M)$ to verify $N(u(M))=1$.
The Perron-Frobenius theorem asserts that $u(M)>0$ elementwise. 
The Perron eigenvalue optimization problem on the set $\mathcal{M}$ can be written as:
\begin{equation} \label{eq:perronEigOpt}
 \min_{M \in \mathcal{M}} f(\rho(M))
\end{equation}
The Perron vector optimization problem can be written as:
\begin{equation} \label{eq:perronVecOpt}
 \min_{M \in \mathcal{M}} f(u(M))
\end{equation}
We assume that $f$
is a real valued continuously differentiable function; $\mathcal{M}$ is a set of irreducible nonnegative matrices
such that $\mathcal{M}=h(\C)$ with $h$ continuously differentiable and $\C$ a closed convex set.
These hypotheses allow us to use algorithms such as projected gradient for the 
searching of stationary points.

We next observe that the minimization of the Perron root and the optimization
of a scalar function of the Perron vector are NP-hard problems and
that only exceptional special cases appear to be polynomial time solvable
by current methods. Consequently, we shall focus
on the local resolution of these problems, with an emphasis on large sparse problems
like the ones encountered for web applications.
We consider the two following problems:

PERRONROOT\_MIN: given a rational linear function $A: \mathbb{R}^{n \times n} \to \mathbb{R}^m$ and a vector $b$ in $\mathbb{Q}^m$,
find a matrix $M$ that minimizes $\rho(M)$
on the polyhedral set $\{ M \in \mathbb{R}^{n \times n} \; | \; A(M) \leq b \;,\; M \geq 0 \}$.

PERRONVECTOR\_OPT: given a rational linear function $A: \mathbb{R}^{n \times n} \to \mathbb{R}^m$, a vector $b$ in $\mathbb{Q}^m$, a rational function $f: \mathbb{R}^n \to \mathbb{R}$ and a rational normalization function $N$,
find a matrix $M$ that minimizes $f(u(M))$
on the polyhedral 
set $\{ M \in \mathbb{R}^{n \times n} \; | \; A(M) \leq b \;,\; M \geq 0 \}$,
where $u(M)$ verifies $N(u(M))=1$.


In general, determining whether all matrices in an 
interval family are stable is a NP-hard problem~\cite{BlondelTsitsiklis}.
The corresponding problem for nonnegative matrices is
to determine whether the maximal Perron root is smaller than a given number.
Indeed, as the Perron root is a monotone function of
the entries of the matrix (see Proposition~\ref{prop:eigenvalueDer} below), this problem is trivial on interval matrix families.
However, we shall prove NP-hardness of PERRONROOT\_MIN and PERRONVECTOR\_OPT by reduction
of linear multiplicative programming problem to each of these problems.
The linear multiplicative problem is:

LMP: given a $n \times m$ rational matrix $A$ and a vector $b$ in $\mathbb{Q}^m$,
find a vector $x \in \mathbb{R}^n$ that minimizes $x_1 x_2$
on the polyhedral set $\{ x \in \mathbb{R}^n \; | \; A x \leq b \;,\; x_1, x_2 \geq 0 \}$.


A theorem of Matsui~\cite{Matsui-NPhardMultOpt} states that LMP is NP-hard.
We shall need a slightly stronger result about a weak version of LMP:

Weak-LMP: given $\epsilon>0$, a $n \times m$ rational matrix $A$ and a vector $b$ in $\mathbb{Q}^m$,
 find a vector $x \in Q$ such that $x_1 x_2 \leq y_1 y_2 +\epsilon$ for all $y \in Q$,
where $Q =\{ x \in \mathbb{R}^n \; | \; A x \leq b \;,\; x_1, x_2 \geq 0 \}$.

\begin{lemma}
Weak-LMP is a NP-hard problem.
\end{lemma}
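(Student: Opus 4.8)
The plan is to reduce the (exact) LMP problem of Matsui to Weak-LMP by exploiting the fact that an optimal solution of LMP can be taken to be a vertex of the polyhedron $Q$, and that vertices of a rational polyhedron have bounded bit-size. First I would recall that the objective $x_1 x_2$ is bilinear, hence its minimum over the polyhedron $Q$ (when finite) is attained at a vertex of $Q$; if the infimum is $-\infty$ or $Q$ is empty this can be detected in polynomial time by linear programming, so we may assume the optimum is attained at some vertex $x^\star$. Standard bounds on the size of vertices of a polyhedron defined by a rational system $Ax \le b$ give a polynomial bound $L$ (in the encoding length of $A$ and $b$) such that every coordinate of every vertex is a rational with numerator and denominator bounded by $2^L$; consequently the optimal value $x_1^\star x_2^\star$ and, more importantly, the \emph{gap} between the two smallest distinct values of $x_1 x_2$ over vertices of $Q$ is at least $2^{-cL}$ for some constant $c$.

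The key step is then: choose $\epsilon := 2^{-cL}/2$, which has polynomial encoding length, and call the Weak-LMP oracle with this $\epsilon$. It returns some $x \in Q$ with $x_1 x_2 \le y_1 y_2 + \epsilon$ for all $y \in Q$, in particular $x_1 x_2 \le x_1^\star x_2^\star + \epsilon$. This $x$ need not be a vertex, but from it I would recover an actual optimal vertex in polynomial time by a purification / rounding argument: starting from $x$, move along the polyhedron decreasing (or not increasing) the bilinear objective until reaching a vertex $\hat x$; since $x_1 x_2$ restricted to any line segment is a one-variable quadratic, one can always slide to an endpoint of the current face without increasing the objective, so $\hat x_1 \hat x_2 \le x_1 x_2 \le x_1^\star x_2^\star + \epsilon$. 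But $\hat x$ is a vertex, and by the choice of $\epsilon$ any vertex whose objective value is within $\epsilon$ of the optimum must in fact achieve the optimum exactly (the gap to the next value exceeds $\epsilon$). Hence $\hat x$ solves LMP exactly, and we have solved LMP in polynomial time using a Weak-LMP oracle. Since LMP is NP-hard by Matsui's theorem, Weak-LMP is NP-hard as well.

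The main obstacle I expect is making the vertex-gap estimate fully rigorous: one must bound from below the minimum nonzero difference $|x_1 x_2 - x_1' x_2'|$ over pairs of vertices $x, x'$ of $Q$, uniformly in terms of the input size. This follows from the bit-size bound on vertex coordinates (so each $x_1 x_2$ is a rational with denominator at most $2^{2L}$, whence two unequal such values differ by at least $2^{-2L}$), but one should be careful that the relevant "optimum" for the purification argument is the minimum over \emph{vertices}, and that the sliding procedure indeed terminates at a vertex with objective no larger than the starting point — both are standard but need to be stated. A secondary, minor point is handling degenerate cases (empty $Q$, unbounded objective, or $x_1$ or $x_2$ forced to $0$), all of which are polynomial-time detectable and reduce the problem trivially. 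Everything else is routine linear-programming bookkeeping.
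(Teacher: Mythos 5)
Your proof is a genuinely different route from the paper's. The paper does not reduce from exact LMP at all: it reopens Matsui's construction and observes that, after strengthening one inequality ($g(x_0,y_0)\leq -2$ instead of $\leq 0$), the LMP instances $P1(M)$ produced by Matsui's reduction have optimal value either $\leq 4p^{8n}-2$ or $\geq 4p^{8n}$ depending on whether $Mx=1$ has a $0$--$1$ solution. So a Weak-LMP oracle with any fixed $\epsilon<2$ already decides the underlying NP-hard problem; the hardness holds with a \emph{constant} additive gap. Your argument instead gives a Turing reduction from exact LMP using bit-size bounds on vertices, and therefore only establishes hardness when $\epsilon$ is allowed to be exponentially small in the input length (still polynomially encoded, so the lemma as stated does follow). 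The paper's version is both shorter and strictly stronger in this respect.

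Two points in your argument need repair before it is fully rigorous. First, your justification of the purification step is wrong as stated: a one-variable quadratic can perfectly well attain its minimum in the interior of a segment (when the leading coefficient $d_1 d_2$ is positive). What saves the argument is that $x_1x_2$ is quasi-concave on $\{x_1\geq 0,\, x_2\geq 0\}$ (its superlevel sets are convex), so its minimum over any segment contained in $Q$ is attained at an endpoint; this, not the quadratic form of the restriction, is the reason sliding to a face never increases the objective. Second, the claim that the optimum is attained at a vertex presupposes that $Q$ is pointed and that the minimum is attained; a general polyhedron $\{Ax\leq b,\ x_1,x_2\geq 0\}$ may have a nontrivial lineality space and hence no vertices. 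This is fixable by first passing to an equivalent bounded polytope with polynomially bounded coefficients --- exactly the Lov\'asz-type reduction the paper invokes in the proof of the subsequent proposition --- but it must be done explicitly, since both your gap estimate and your purification live on the vertex set.
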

\begin{proof}
A small modification of the proof of Matsui~\cite{Matsui-NPhardMultOpt}
gives the result.
If we replace $g(x_0, y_0) \leq 0$ by $g(x_0, y_0) \leq -2$ in Corollary~2.3 
we remark that the rest of the proof still holds 
since $n^4 p^{4n}+p^2 -4 p^{4n+1} \leq -2$ for all $n \geq 1$ and $p=n^{n^4}$.
Then, with the notations of~\cite{Matsui-NPhardMultOpt},
we have proved that the optimal value of $P1(M)$ is less than or
equal to $4p^{8n}-2$ if and only if $Mx=1$ has a $0-1$ valued solution
and it is greater than or equal to $4p^{8n}$ if and only if $Mx=1$ does not have a $0-1$ valued solution.
We just need to choose $\epsilon < 2$ in problem $P1(M)$ to finish the proof of the lemma.
\end{proof}

\begin{prop}
PERRONROOT\_MIN and PERRONVECTOR\_OPT are NP-hard problems.
\end{prop}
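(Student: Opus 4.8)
The plan is to reduce Weak-LMP to each of the two problems, using the NP-hardness of Weak-LMP established in the preceding lemma. The common idea is to encode the product $x_1 x_2$ appearing in Weak-LMP as either the Perron root or a coordinate of the Perron vector of a small, structured nonnegative matrix whose entries depend affinely on $x$. The natural gadget is a $2\times 2$ block of the form $\begin{pmatrix} 0 & x_1 \\ x_2 & 0\end{pmatrix}$, whose Perron root is $\sqrt{x_1 x_2}$ and whose Perron vector (suitably normalized) has a coordinate controlled by $\sqrt{x_1/x_2}$; composing with monotone rational functions $f$ and a rational normalization $N$ will let us turn the spurious square root into an honest product. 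The affine dependence of the matrix entries on $x$ ensures that the polyhedral constraint $Ax \le b$, $x_1,x_2\ge 0$ translates into a polyhedral constraint $A'(M)\le b'$, $M\ge 0$ of the form required by PERRONROOT\_MIN and PERRONVECTOR\_OPT, and the reduction is clearly polynomial in the bit-size of the data.

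For PERRONROOT\_MIN: given a Weak-LMP instance on $Q=\{x : Ax\le b,\ x_1,x_2\ge 0\}$, I would build an $(n{+}k)\times(n{+}k)$ nonnegative matrix $M(x)$ that is block-diagonal (up to an irreducibility perturbation) with one block being the $2\times2$ gadget above carrying $x_1,x_2$ and the remaining entries/blocks chosen so that (i) $M(x)$ is irreducible for all $x\in Q$, and (ii) $\rho(M(x)) = \sqrt{x_1 x_2}$, or more precisely a fixed monotone rational function of $x_1 x_2$ that dominates the Perron roots of all other blocks. One has to be slightly careful: to make $M(x)$ irreducible one typically adds $\varepsilon$ on a Hamiltonian cycle, which perturbs $\rho$; I would instead keep the gadget isolated as its own irreducible $2\times2$ block and argue on that block directly, or take $\varepsilon$ small enough (polynomially bounded) that minimizing $\rho(M(x))$ still recovers an $\epsilon$-approximate minimizer of $x_1 x_2$ after inverting the monotone transformation. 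Since $t\mapsto t^2$ is monotone and Lipschitz on the (polynomially bounded) range of $\sqrt{x_1 x_2}$ over $Q$, an additive-$\epsilon'$ minimizer of $\rho$ yields an additive-$\epsilon$ minimizer of $x_1 x_2$ with $\epsilon'$ polynomial in $\epsilon$ and the data, so a polynomial-time algorithm for PERRONROOT\_MIN would solve Weak-LMP.

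For PERRONVECTOR\_OPT: here we are allowed to choose $f$ and $N$, so I would exploit this freedom directly. With the $2\times2$ gadget $\begin{pmatrix}0 & x_1\\ x_2 & 0\end{pmatrix}$, the positive eigenvector is proportional to $(\sqrt{x_1},\sqrt{x_2})$ (equivalently $(x_1,\sqrt{x_1 x_2})$ after scaling); choosing the rational normalization $N$ to fix the first coordinate to $1$ makes the second Perron-vector coordinate equal to $\sqrt{x_2/x_1}$, and then taking $f$ to be a rational function that multiplies appropriate coordinates reconstructs $x_1 x_2$ exactly — for instance, normalize so that one coordinate equals $x_1$ and read off the coordinate equal to $\sqrt{x_1 x_2}$, then set $f$ to be the square of that coordinate, which is rational. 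Again $M(x)$ must be embedded into an irreducible matrix; since $f$ depends only on the gadget coordinates, the rest of $M$ can be any fixed irreducible nonnegative filler independent of $x$, so irreducibility is free and the Perron vector's gadget coordinates are unaffected. This gives an exact (hence in particular $\epsilon$-approximate) reduction from Weak-LMP to PERRONVECTOR\_OPT.

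I expect the main obstacle to be the bookkeeping around irreducibility and normalization in the PERRONROOT\_MIN case: one must ensure simultaneously that the constructed matrix is irreducible for \emph{every} feasible $x$ (so that $\rho$ and $u$ are well defined and the problems are genuine instances of our optimization problems), that the gadget block's Perron root genuinely dominates, and that the inverse of the monotone transformation $t\mapsto t^2$ only blows up the approximation error polynomially — which requires an a priori polynomial bound on $\|x\|$ over $Q$, obtainable from the bit-size of $A,b$ by standard polyhedral estimates. Once these technical points are pinned down, the reductions are routine and NP-hardness of both PERRONROOT\_MIN and PERRONVECTOR\_OPT follows.
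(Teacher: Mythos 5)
Your high-level plan --- reduce from Weak-LMP by encoding $x_1x_2$ in the Perron root or Perron vector of a structured nonnegative matrix with affinely varying entries --- is the same as the paper's, but there is a genuine gap at exactly the point you defer as ``bookkeeping''. The Weak-LMP polyhedron is $Q=\{x\in\mathbb{R}^n \mid Ax\le b,\ x_1,x_2\ge 0\}$: the coordinates $x_3,\dots,x_n$ appear in the linear constraints but are \emph{not} sign-constrained, whereas in PERRONROOT\_MIN and PERRONVECTOR\_OPT the only decision variables are the matrix entries, all of which must be nonnegative. So $x_3,\dots,x_n$ must be realized as (shifted) nonnegative entries of $M$. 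Your construction leaves them out entirely: the gadget carries only $x_1,x_2$ and ``the rest of $M$'' is a fixed filler independent of $x$, which cannot encode constraints involving $x_3,\dots,x_n$. Once these variables are placed in an \emph{irreducible} matrix there is no way to make them harmless: by Proposition~\ref{prop:eigenvalueDer}, $\partial\rho/\partial M_{ij}=v_iu_j>0$ for every entry, so every entry of an irreducible matrix strictly influences $\rho$ (and $u$). Keeping the $2\times 2$ gadget as an isolated block instead makes $M$ reducible, so $u(M)$ is not well defined and the instance falls outside the problem class; in particular your claim that an irreducible filler leaves ``the Perron vector's gadget coordinates unaffected'' fails for any irreducible completion.

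This unavoidable contamination is precisely why one reduces from \emph{weak} LMP rather than exact LMP, and it is where all the work lies. The paper's fix is to put $1,x_1,\dots,x_n$ on a single cycle (irreducible by construction, with $\rho$ a fixed root of $x_1x_2\cdots x_n$, and with a designated coordinate of $u$ equal to $\rho$ under the normalization $u_1=1$, so one matrix serves both problems), to shift the free coordinates $x_3,\dots,x_n$ by a constant $m$ to make them nonnegative, and then to choose $m$ so large --- but still of polynomial encoding length --- that the spurious factor $\prod_{i\ge3}(x_i+m)$ varies over $Q$ by a relative amount small enough to force only an additive error $\le\epsilon$ on $x_1x_2$. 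That quantitative choice of $m$, via the bounds on $(m+\bar{C})^{n-2}-(m+\underline{C})^{n-2}$ against $\epsilon$, is the substance of the proof and is absent from your proposal. Your remaining approximation step (inverting the monotone map $t\mapsto t^{k}$ on a polynomially bounded range) is fine, but it is not the hard part.
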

\begin{proof}
We define the matrix $M$ by the matrix with lower diagonal equal to $x$ and $1$ on the top left corner:
\[
 M=\begin{pmatrix}
    0 & 0 & \ldots & 0 & 1 \\
x_1 & 0 & 0  & & 0\\
0 & x_2 & 0 &  & \vdots \\
\vdots & & \ddots &\ddots& \vdots\\
0 & \ldots & 0 & x_n & 0
   \end{pmatrix} \enspace .
\]
We set the admissible set $X$ for the vector $x$ as
$X=\{ x \in \mathbb{R}^m | A x \geq b , x \geq 0\}$ with
a rational $p \times m$ matrix $A$ and a rational vector $b$ of length $p$.

For the eigenvector problem, we set the normalization
$u_1=1$ and we take $f(u)=u_{n}$. We have
$u_n=\rho$, so the complexity is the same for
eigenvalue and eigenvector optimization is this context.

Now minimizing $\rho(M)$ is equivalent to minimizing $x_1 x_2 \ldots x_n$
because the n-th root is an nondecreasing function on $\mathbb{R}_{+}$.
We thus just need to reduce in polynomial time the $\epsilon$-solvability
of weak-LMP to the minimization of $x_1 x_2 \ldots x_n$ on $X$.

As $x \mapsto \log(x_1 x_2 \ldots x_n)$ is a concave function,
either the problem is unbounded or there exists an 
optimal solution that is an extreme point of the polyhedral admissible set (Theorem~3.4.7 in~\cite{Bazaraa-Nonlinopt}).
Hence, using Lemma~6.2.5 in~\cite{Lovasz-geomAlgo} we see that we can define 
a bounded problem equivalent to the unbounded problem and with coefficients that have a polynomial encoding length:
in the following, we assume that the admissible set is bounded.

Given a rational number $\epsilon>0$, a rational $p \times m$ matrix $A'$ and a rational vector $b'$ of length $p$,
let $X':=\{ x \in \mathbb{R}^m | A' x \geq b' , x_1 \geq 0, x_2 \geq 0\}$ be a bounded polyhedron.
Denoting $v_2:= \min_{x \in X'} x_1 x_2$, we are looking for
 $x' \in X'$ such that $\abs{y_1 y_2 - v_2} \leq \epsilon$.

Compute $\underline{C}:=\min_{i \in [n]} \min_{x \in X'} x_i$ and $\bar{C}:=\max_{i \in [n]} \max_{x \in X'} x_i$
(linear programs). 
We first set $m_0=\bar{C}-\underline{C}+1$ so that $m_0>0$ and $m_0>-\underline{C}$ 
and $t^{m_0} \in \mathbb{R}^n$ defined by $t^{m_0}_i=0$ if $i\in \{1,2\}$ and $t^{m_0}_i=m$ if $i \geq 3$.
Let 
\[
X_0:=\{ x \in \mathbb{R}^m | A' x \geq b'-A' t^{m_0} , x \geq 0\} \enspace .
\]
We have $v_2= \min_{x \in X'} x_1 x_2=\min_{x \in X_0} x_1 x_2$. 
Let $v_0:=\min_{x \in X} x_1 x_2 \ldots x_n$. For all $x \in X_0$, we have
\[
 x_1 x_2 (m_0+\underline{C})^{n-2} \leq x_1 x_2 \ldots x_n \leq  x_1 x_2 (m_0+\bar{C})^{n-2} \enspace ,
\]
so that $v_2 (m_0+\underline{C})^{n-2} \leq v_0 \leq v_2 (m_0+\bar{C})^{n-2}$.

We now set 
\[
m:=\max \{ -\underline{C}+2^{n-3} \frac{\bar{C}}{\epsilon} \frac{v_0}{(m_0+\underline{C})^{n-2}} , \bar{C} - 2 \underline{C} , 1 \}
\]
 and we define $t^m$ and $X$ in the same way as $t^{m_0}$ and $X_0$.
Remark that $m$ an encoding length polynomial in the length of the entries.
Let $v_n:=\min_{x \in X} x_1 x_2 \ldots x_n$. For all $x \in X$, we have
$v_2 (m+\underline{C})^{n-2} \leq v_n \leq v_2 (m+\bar{C})^{n-2}$ and $x' = x -t^m$ is a point of $X'$ with $x'_1 x'_2=x_1 x_2$.

As $v_2 (m_0+\underline{C})^{n-2} \leq v_0$, $m \geq -\underline{C}+2^{n-3} \frac{\bar{C}}{\epsilon} v_2$.
As $m \geq \bar{C} - 2 \underline{C}$, $\frac{m+\underline{C}}{m+\bar{C}} \geq \frac{1}{2}$,
 so that $\frac{(m+\underline{C})^{n-2}}{(m+\bar{C})^{n-3}} \geq \frac{1}{2^{n-3}}(m+\underline{C})$ and 
$\frac{(m+\underline{C})^{n-2}}{(m+\bar{C})^{n-3}} \geq \frac{\bar{C}}{\epsilon} v_2$.

Denote $M:=(m+\underline{C})^{n-2}$ and $\Delta:= (m+\bar{C})^{n-2} - (m+\underline{C})^{n-2}$: $M v_2 \leq v_n \leq (M+\Delta) v_2$
We have $\Delta:= \sum_{k=0}^{n-3} \binom{n-2}{k} m^k \bar{C}^{n-2-k} - \sum_{k=0}^{n-3} \binom{n-2}{k} m^k \underline{C}^{n-2-k}
 \leq \bar{C} (m+\bar{C})^{n-3}$. Hence, $M \geq  \frac{\Delta}{\epsilon} v_2$.
As $\Delta \geq 0$, $M+\Delta \geq  \frac{\Delta}{\epsilon} v_2$. We obtain
\[
 \epsilon \geq \frac{\Delta M v_2}{M(M+\Delta)}= (\frac{1}{M}-\frac{1}{M+\Delta})M v_2 \geq v_2 - \frac{v_n}{M+\Delta} \enspace .
\]
Finally
\[
 \frac{v_n}{ (m_0+\bar{C})^{n-2}}\leq v_2 \leq \frac{v_n}{(m_0+\bar{C})^{n-2}} +\epsilon \enspace .
\]
which proves that weak-LMP reduces to the minimization of $x_1 x_2 \ldots x_n$ on $X$.
\end{proof}

The general Perron eigenvalue optimization problem is NP-hard
but we however point out some cases for which it is tractable.
The following proposition is well known:
\begin{prop}[\cite{Kingman-logconv}]
 The eigenvalue $\rho(M)$ is a log-convex function of the log of the entries of the nonnegative matrix $M$.
\end{prop}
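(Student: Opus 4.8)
\emph{Proof plan.} Fix the zero pattern of $M$ and parametrize its positive entries by their logarithms, writing $a_{ij}:=\log M_{ij}$; we must show that $a\mapsto\log\rho(M)$ is convex. The plan is to exhibit this function as a pointwise limit of manifestly convex functions of $a$.

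First I would recall the power-type identity
\[
\rho(M)=\lim_{k\to\infty}\bigl(\mathbf{1}^{T}M^{k}\mathbf{1}\bigr)^{1/k},
\]
$\mathbf{1}$ denoting the all-ones vector. This follows from Perron--Frobenius: irreducibility makes the Perron vector $u=u(M)$ entrywise positive, so $\tfrac{1}{\max_i u_i}\,u\le\mathbf{1}\le\tfrac{1}{\min_i u_i}\,u$ entrywise, and applying the nonnegative matrix $M^{k}$ and then $\mathbf{1}^{T}$ yields $c_{1}\rho(M)^{k}\le\mathbf{1}^{T}M^{k}\mathbf{1}\le c_{2}\rho(M)^{k}$ for constants $c_{1},c_{2}>0$ depending only on $M$; taking $k$-th roots gives the identity.

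Next I would expand the bilinear form as a sum over walks of length $k$ in the pattern of $M$:
\[
\mathbf{1}^{T}M^{k}\mathbf{1}=\sum_{i_{0},\dots,i_{k}}M_{i_{0}i_{1}}\cdots M_{i_{k-1}i_{k}}=\sum_{(i_{0},\dots,i_{k})}\exp\bigl(a_{i_{0}i_{1}}+\cdots+a_{i_{k-1}i_{k}}\bigr).
\]
Each term is the exponential of a linear form in $a$, so $a\mapsto\log(\mathbf{1}^{T}M^{k}\mathbf{1})$ is a log-sum-exp of affine functions --- the composition of the convex log-sum-exp map with an affine map --- hence convex; dividing by $k$ preserves convexity. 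Since a pointwise limit of convex functions is convex, letting $k\to\infty$ shows that $a\mapsto\log\rho(M)=\lim_{k}\tfrac{1}{k}\log(\mathbf{1}^{T}M^{k}\mathbf{1})$ is convex, which is exactly log-convexity of $\rho(M)$ in $\log M$.

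The only delicate point is the lower bound $\mathbf{1}^{T}M^{k}\mathbf{1}\ge c_{1}\rho(M)^{k}$, i.e.\ the place where irreducibility (hence $u>0$) is genuinely needed; everything else is formal. Alternatively, one could avoid the limit altogether: the Collatz--Wielandt formula $\rho(M)=\min_{x>0}\max_{i}(Mx)_{i}/x_{i}$ becomes, after the substitution $x_{j}=\exp y_{j}$, the expression $\log\rho(M)=\min_{y\in\mathbb{R}^{n}}\max_{i}\log\bigl(\sum_{j}\exp(a_{ij}+y_{j}-y_{i})\bigr)$, in which the function being minimized is jointly convex in $(a,y)$; convexity in $a$ then follows because partial minimization of a jointly convex function is convex.
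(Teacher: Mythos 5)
Your argument is correct. The paper itself gives no proof of this proposition --- it is stated as well known and attributed to \cite{Kingman-logconv} --- and what you have written is essentially Kingman's original argument: expand $\mathbf{1}^{T}M^{k}\mathbf{1}$ over walks, observe that each walk contributes $\exp$ of a linear form in $a=\log M$ so that $\tfrac{1}{k}\log(\mathbf{1}^{T}M^{k}\mathbf{1})$ is convex as a log-sum-exp of affine maps, and pass to the pointwise limit; your justification of the limit identity via the positivity of the Perron vector is sound in the irreducible setting the paper works in (and for general nonnegative $M$ one can instead invoke Gelfand's formula, since the entrywise sum is a submultiplicative norm). The only point worth noting is that the paper's formulation, $\rho(C)\le\rho(A)^{\alpha}\rho(B)^{1-\alpha}$ for $C_{ij}=A_{ij}^{\alpha}B_{ij}^{1-\alpha}$, allows $A$ and $B$ to have different supports; your fixed-zero-pattern parametrization covers the common-support case directly, and the general case follows either by continuity of $\rho$ or by running the same walk expansion with H\"older's inequality in place of log-sum-exp. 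Your Collatz--Wielandt alternative is also a valid (and limit-free) route.
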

This means that $\log \circ \rho \circ \exp$ is a convex function, where $\exp$ is the 
componentwise exponential, namely if $0 \leq \alpha \leq 1$ and $A$ and $B$ are two nonnegative $n \times n$ matrices
then for $C_{i,j}=A_{i,j}^\alpha B_{i,j}^{1-\alpha}$, $\rho(C) \leq \rho(A)^\alpha \rho(B)^{1-\alpha}$.
\begin{cor}
The optimization problem
 \begin{equation*} 
 \min_{M \in \exp(\C)} \rho(M)
\end{equation*}
with $\C$ convex is equivalent to the convex problem
 \begin{equation*}
 \min_{L \in C} \log \circ \rho \circ \exp(L)
\end{equation*}
\end{cor}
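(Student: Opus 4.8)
The plan is to obtain the corollary directly from the preceding proposition (Kingman's log-convexity result) together with the monotonicity of the logarithm, so no new argument is really needed. First I would observe that the componentwise exponential $L \mapsto \exp(L)$ is a bijection from $\C$ onto $\exp(\C)$; hence the change of variable $M = \exp(L)$ identifies the problem $\min_{M \in \exp(\C)} \rho(M)$ with the problem $\min_{L \in \C} \rho(\exp(L))$, the two having the same optimal value and optimal solutions in bijective correspondence. I would also note, for later use, that every matrix $\exp(L)$ has only positive entries, hence is irreducible (indeed primitive), so by the Perron--Frobenius theorem $\rho(\exp(L)) > 0$ for every $L \in \C$, which makes $\log \rho(\exp(L))$ well defined throughout $\C$.

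Next I would use that $t \mapsto \log t$ is a strictly increasing bijection of $\mathbb{R}_{>0}$ onto $\mathbb{R}$. Precomposing is harmless: composing the objective $\rho \circ \exp$ with this increasing function leaves the set of minimizers unchanged, and if $\rho^\star := \min_{L \in \C} \rho(\exp(L))$ then $\log \rho^\star = \min_{L \in \C} \log \rho(\exp(L))$, with $L^\star$ optimal for one problem if and only if it is optimal for the other. This is precisely the sense in which the two stated problems are \emph{equivalent}: identical optimal-solution sets, and optimal values related by the monotone map $\log$ (equivalently, via $\exp$ for the reverse direction).

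Finally, the reduced problem is convex: by the preceding proposition the function $\log \circ \rho \circ \exp$ is convex on $\mathbb{R}^{n\times n}$, and $\C$ is convex by hypothesis, so $\min_{L \in \C} \log \rho(\exp(L))$ is a convex program. I do not anticipate any genuine obstacle here; the only points that deserve an explicit word are (i) verifying that $\rho$ remains strictly positive on $\exp(\C)$ so that taking logarithms is legitimate --- immediate since exponentials of real matrices are entrywise positive, hence irreducible --- and (ii) spelling out the meaning of ``equivalent'' as above, so that the equivalence of the optimization problems, and not merely of their optimal values, is on the record.
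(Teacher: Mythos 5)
Your argument is correct and is exactly the intended one: the paper states this corollary without proof as an immediate consequence of Kingman's proposition, and your route (componentwise $\exp$ is a bijection of $\C$ onto $\exp(\C)$ with entrywise-positive, hence irreducible, images so that $\rho>0$ and $\log$ may be applied; monotonicity of $\log$ preserves minimizers; Kingman's result gives convexity of $\log\circ\rho\circ\exp$ on the convex set $\C$) is the same reasoning the paper relies on implicitly. No gaps.
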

The difference between this proposition and the previous one is that here $\mathcal{M}=h(C)=\exp(C)$ whereas
previously we had $h$ affine. This makes a big difference
since an $\epsilon$-solution of a convex program can be found in polynomial time~\cite{Nem-modernConvOpt}.

\begin{remark}
The largest singular value (which is a norm) is a convex function of the entries of the matrix.
For a symmetric matrix, the singular values are the absolute values of the eigenvalues.
Thus minimizing the largest eigenvalue on a convex set of nonnegative symmetric matrices is
a convex problem.
\end{remark}

In order to solve the signal to interference ratio balancing problem,
Boche and Schuber~\cite{Boche-perronvalueOpt}
give an algorithm for the global minimization of the Perron root when
the rows of the controlled matrix are independently controlled, ie when the
admissible set is of the form $\mathcal{Z}_1 \times \ldots \times \mathcal{Z}_n$
and $z_k \in \mathcal{Z}_k$ is the $k^{th}$ row of the matrix.
\begin{prop}[\cite{Boche-perronvalueOpt}] \label{prop:decoupled}
Let $\mathcal{Z}=\mathcal{Z}_1 \times \ldots \times \mathcal{Z}_n$
and $\Gamma$ be a fixed positive diagonal matrix.
If the $k^{th}$ row of the matrix $V(z)$ only depends on $z_k \in \mathcal{Z}_k$,
$V(z)$ is irreducible for all $z \in \mathcal{Z}$
and if $V(z)$ is continuous on $\mathcal{Z}$ ($\mathcal{Z}$ can be discrete),
then there exists a monotone algorithm that minimizes
$\rho(\Gamma V(z))$ over $\mathcal{Z}$, in the sense that
$\rho(\Gamma V(z_{n+1})) \leq \rho(\Gamma V(z_{n}))$ for all $n \geq 0$
and $\lim_n \rho(\Gamma V(z_{n})) = \min_{z \in \mathcal{Z}} \rho(\Gamma V(z))$.
\end{prop}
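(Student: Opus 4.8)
The plan is to recover the result of Boche and Schubert from the Collatz--Wielandt min-max characterization of the Perron root together with the product structure of $\mathcal{Z}$. Write $\gamma_i>0$ for the diagonal entries of $\Gamma$ and, for a positive vector $u$ and a point $z\in\mathcal{Z}$, set $r_i(z_i,u):=\gamma_i(V(z)u)_i/u_i$ and $g(z,u):=\max_i r_i(z_i,u)$; note that $r_i$ depends on $z$ only through $z_i$, because the $i$-th row of $V(z)$ does. Collatz--Wielandt for the irreducible matrix $\Gamma V(z)$ gives $\rho(\Gamma V(z))=\min_{u>0}g(z,u)$, attained exactly at $u=u(\Gamma V(z))$, together with the dual lower bound $\rho(\Gamma V(z))\ge\min_i r_i(z_i,u)$ for every $u>0$. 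The crucial decoupling identity is $\min_{z\in\mathcal{Z}}g(z,u)=\max_i\min_{z_i\in\mathcal{Z}_i}r_i(z_i,u)$, valid because $\mathcal{Z}$ is a product and each term is governed by its own block; its right-hand side is an $n$-fold independent scalar minimization. The monotone algorithm is then: pick $z_0\in\mathcal{Z}$; given $z_n$, compute $u_n:=u(\Gamma V(z_n))$ by power iterations, and let $z_{n+1}$ be obtained by minimizing each block separately, $r_i((z_{n+1})_i,u_n)=\min_{z_i\in\mathcal{Z}_i}r_i(z_i,u_n)$. For these block minima and the limits below to be attained I will assume each $\mathcal{Z}_k$ compact, which in particular covers the finite/discrete case.

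Monotonicity is the easy half. By construction $g(z_{n+1},u_n)=\min_z g(z,u_n)\le g(z_n,u_n)=\rho(\Gamma V(z_n))$, while $\rho(\Gamma V(z_{n+1}))=\min_{u>0}g(z_{n+1},u)\le g(z_{n+1},u_n)$; chaining these two inequalities gives $\rho(\Gamma V(z_{n+1}))\le\rho(\Gamma V(z_n))$. Hence $\rho_n:=\rho(\Gamma V(z_n))$ decreases to some limit $\rho_\infty\ge\rho^*:=\min_{z\in\mathcal{Z}}\rho(\Gamma V(z))$.

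The substantial step is to show $\rho_\infty=\rho^*$. By compactness extract a subsequence along which $z_n\to\bar z$ and $z_{n+1}\to z'$. Since $V$ is continuous and $V(z)$ irreducible, the normalized Perron vector and the Perron root are continuous in $z$ (the latter because irreducibility makes $\rho$ a simple eigenvalue), so $u_n\to\bar u:=u(\Gamma V(\bar z))$ and $\rho(\Gamma V(z'))=\rho_\infty$. Passing to the limit in the block-minimization relations, and using continuity of $u\mapsto\min_{z_i}r_i(z_i,u)$ (a standard parametric-minimum continuity), gives $r_i(z'_i,\bar u)=\min_{z_i}r_i(z_i,\bar u)$ for every $i$, hence $g(z',\bar u)=\min_z g(z,\bar u)$; on the other hand $\min_z g(z,u_n)=g(z_{n+1},u_n)$ is squeezed between $\rho_{n+1}$ and $\rho_n$, so $\min_z g(z,\bar u)=\rho_\infty$. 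Therefore $\Gamma V(z')\bar u\le\rho_\infty\bar u$ componentwise; the Collatz--Wielandt upper bound yields $\rho(\Gamma V(z'))\le\rho_\infty$, and since equality already holds, irreducibility of $\Gamma V(z')$ forces $\Gamma V(z')\bar u=\rho_\infty\bar u$. Consequently each $r_i(z'_i,\bar u)=\rho_\infty$, so $\min_{z_i}r_i(z_i,\bar u)=\rho_\infty$ for all $i$, and thus for every $z\in\mathcal{Z}$ one has $\Gamma V(z)\bar u\ge\rho_\infty\bar u$ componentwise, whence $\rho(\Gamma V(z))\ge\min_i r_i(z_i,\bar u)\ge\rho_\infty$ by the Collatz--Wielandt lower bound. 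Hence $\rho^*\ge\rho_\infty$, and combined with $\rho_\infty\ge\rho^*$ we get $\rho_\infty=\rho^*$, which is the asserted convergence $\lim_n\rho(\Gamma V(z_n))=\min_z\rho(\Gamma V(z))$.

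The hard part will be this last paragraph: interchanging $\min_z$ with the subsequential limits and establishing continuity of the Perron eigenvector in $z$, both of which rely on compactness of the factors $\mathcal{Z}_k$ and on simplicity of the Perron root guaranteed by irreducibility. Once those are in place, the two-sided Collatz--Wielandt estimates close the argument; the decoupling identity and the monotonicity of $\rho_n$ are routine.
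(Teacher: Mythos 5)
The paper does not actually prove this proposition---it is imported verbatim from the cited Boche--Schubert reference---so there is no in-paper proof to compare against. Your argument is correct and is essentially a reconstruction of the original one: the alternating scheme (Perron vector of $\Gamma V(z_n)$, then independent blockwise minimization of the Collatz--Wielandt ratios $r_i$) together with the two-sided min--max characterization of the Perron root is exactly the mechanism behind the Boche--Schubert algorithm, and your added compactness hypothesis on each $\mathcal{Z}_k$ is the right one to make the blockwise minima and the limit points exist (it is implicit in the statement, which writes $\min$ and allows discrete, i.e.\ finite, $\mathcal{Z}$).
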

Let $X$ be the admissible set of a Perron eigenvalue optimization problem.
Denote $\pi_i$ the projection on the $i^{th}$ coordinate.
Then the minimization of the Perron value over $\tilde{X}:=\pi_1(X) \times \ldots \times \pi_n(X)$
is a relaxation of the original problem which is solvable with
the algorithm of Proposition~\ref{prop:decoupled} as soon as all matrices in $\tilde{X}$ are irreducible.
We thus get a lower bound for the optimization of the Perron eigenvalue problem
in a general setting.
This is complementary with the local optimization approach developped in this article, which would yield
an upper bound on the optimal value of this problem.

\begin{remark} \label{rem:PRopt}
As developped in~\cite{Fercoq-PRopt}, general PageRank optimization problems can be formulated as follows.
Let $M$ be the transition matrix of PageRank and $\rho$ the associated
occupation measure. When $M$ is irreducible, they are linked by
the relation $M_{ij}=\frac{\rho_{ij}}{\sum_k \rho_{ik}}=h_{ij}(\rho)$, which yields our function $h$.
We also have $u(h(\rho))_i=\sum_k \rho_{ik}$ and $\rho_{ij}=u(M)_i M_{ij}$.

If the set $\C$, which defines the design constraints of the webmaster,
is a convex set of occupation measures, if $h$ is as defined above and 
if $f$ is a convex function, then 
\begin{equation*} \label{eq:pagerankOpt}
 \min_{\rho \in \C} f(u(h(\rho)))
\end{equation*}
is a convex problem.
Thus $\epsilon$-solutions of PageRank optimization problems can be found in polynomial-time.
Details of this development and exact resolution for a linear $f$ can be found in~\cite{Fercoq-PRopt}.
\end{remark}

\section{A power-type algorithm for the evaluation of the derivative of a function of the principal eigenvector} \label{sec:fastDer}

We now turn to the main topic of this paper. We give in Theorem~\ref{thm:fastDer} a power-type algorithm for the evaluation
of the partial derivatives of the principal eigenvector of a matrix with a simple principal eigenvalue.

We consider a matrix $M$ with a simple eigenvalue $\lambda$ and associated
left and right eigenvectors $u$ and $v$. We shall normalize $v$ by the assumption $\sum_{i \in [n]} v_i u_i=1$.
The derivatives of the eigenvalue of a matrix are well
known and easy to compute:
\begin{prop}[\cite{Kato} Section~II.2.2] \label{prop:eigenvalueDer}
Denoting $v$ and $u$ the left and right eigenvectors of a matrix $M$ associated to a simple eigenvalue $\lambda$,
normalized such that $\sum_{i \in [n]} v_i u_i=1$, 
the derivative of $\lambda$ can be written as:
\begin{equation*}
 \frac{\partial \lambda}{\partial M_{ij}} = v_i u_j
\end{equation*}
\end{prop}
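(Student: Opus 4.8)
The statement to prove is Proposition~\ref{prop:eigenvalueDer}, giving $\partial \lambda/\partial M_{ij} = v_i u_j$ for a simple eigenvalue $\lambda$ with left eigenvector $v$ and right eigenvector $u$ normalized by $\sum_i v_i u_i = 1$. Since this is a classical fact (cited to Kato), the plan is to give the short first-order perturbation argument rather than invoke an external black box.

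\emph{Setup.} I would start from the eigenvalue equation $M u = \lambda u$ and $v^T M = \lambda v^T$ with the normalization $v^T u = 1$. Perturb $M$ to $M + t E$ for a fixed direction $E$ (eventually $E = e_i e_j^T$, the single-entry perturbation). Because $\lambda$ is a simple eigenvalue, standard analytic perturbation theory (Kato) guarantees that for small $t$ there are differentiable branches $\lambda(t)$, $u(t)$ with $\lambda(0)=\lambda$, $u(0)=u$, satisfying $(M+tE)u(t) = \lambda(t) u(t)$. Write $\dot\lambda$, $\dot u$ for the derivatives at $t=0$.

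\emph{Key step.} Differentiate $(M+tE)u(t) = \lambda(t)u(t)$ at $t=0$ to get $E u + M \dot u = \dot\lambda\, u + \lambda \dot u$, i.e. $(M - \lambda I)\dot u = \dot\lambda\, u - E u$. Now left-multiply by $v^T$. Since $v^T M = \lambda v^T$, the left-hand side $v^T(M-\lambda I)\dot u = 0$ vanishes, so $0 = \dot\lambda\, v^T u - v^T E u = \dot\lambda - v^T E u$, using the normalization $v^T u = 1$. Hence $\dot\lambda = v^T E u$. Taking $E = e_i e_j^T$ gives $v^T E u = v_i u_j$, which is exactly $\partial \lambda/\partial M_{ij} = v_i u_j$.

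\emph{Main obstacle.} There is no real computational difficulty; the only point requiring care is the justification that the branches $\lambda(t)$ and $u(t)$ are differentiable in a neighborhood of $t=0$ — this is where simplicity of the eigenvalue is essential and is precisely the content of the cited result in Kato, Section~II.2.2, so I would simply cite it. One should also note that the normalization chosen for $u$ along the branch does not affect $\dot\lambda$: replacing $u(t)$ by $c(t)u(t)$ changes $\dot u$ by a multiple of $u$, which is annihilated by $v^T(M-\lambda I)$, so the argument is independent of how $u(t)$ is scaled. With that remark, the computation above is complete.
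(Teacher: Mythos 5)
Your proof is correct: the first-order perturbation computation (differentiate $(M+tE)u(t)=\lambda(t)u(t)$, left-multiply by $v^T$, use $v^TM=\lambda v^T$ and $v^Tu=1$) is exactly the standard argument underlying the result, and your appeal to Kato for the differentiability of the branches at a simple eigenvalue is the right place to put the only nontrivial analytic input. The paper itself gives no proof — it simply cites Kato, Section~II.2.2 — so your derivation supplies precisely the argument that citation stands for, including the correct observation that the scaling of $u(t)$ along the branch is irrelevant.
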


In this section, we give a scalable algorithm
to compute the partial derivatives of the function $f \circ u$, 
that to an irreducible nonnegative matrix $M$ associates the utility of its Perron vector.
In other words we compute $g_{ij}=\sum_k \frac{\partial f}{\partial u_{k}} \frac{\partial u_k}{\partial M_{ij}}$.
This algorithm is a sparse iterative scheme and it is the core of the optimization algorithms 
that we will then use for the large problems encountered in the optimization of web ranking.

We first recall some results on the derivatives of eigenprojectors (see~\cite{Kato} for more background).
Throughout the end of the paper, we shall consider column vectors
and row vectors will be written as the transpose of a column vector.
Let $P$ be the eigenprojector of $M$ for the eigenvalue $\lambda$.
One can easily check that as $\lambda$ is a simple eigenvalue, the spectral projector is $P = u v^T$ as soon as $v^T u=1$.
We have the relation
\[
 \frac{\partial P}{\partial M_{ij}} = -S E_{ij} P - P E_{ij} S
\]
 where $E_{ij}$ is the $n \times n$ matrix with all entries zero except the $ij^{th}$
and $S=(M-\lambda I)^{\#}$ is the Drazin pseudo-inverse of $M-\lambda I$. This matrix $S$ also satisfies the equalities
\begin{equation} \label{eqn:S}
 S(M-\lambda I)=(M-\lambda I)S=I-P \quad \text{and} \quad S P = P S = 0
\end{equation}

When it comes to eigenvectors, we have to set a normalization for each of them.
Let $N$ be the normalization function for the right eigenvector.
We assume that it is differentiable at $u$ and that $N(\alpha u)=\alpha N(u)$ for all nonnegative scalars $\alpha$,
which implies $\frac{\partial N}{\partial u}(u) \cdot u = N(u)$.
We normalize $v$ by the natural normalization $\sum_i u_i v_i =1$.

\begin{prop}[\cite{Meyer-eigenvectorDer}] \label{prop:eigenvectorDer}
Let $M$ be a matrix with a simple eigenvalue $\lambda$ and associated eigenvector $u$
normalized by $N(u)=1$. We denote $S=(M-\lambda I)^{\#}$. 
Then the partial derivatives of the eigenvector are given by:
\begin{equation*}
\frac{\partial u}{\partial M_{ij}}(M)= -S e_i u_j + (\nabla N(u)^T S e_i u_j)u
\end{equation*}
where $e_i$ is the vector with $i^{th}$ entry equal to 1.
\end{prop}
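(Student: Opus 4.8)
The plan is to exploit the rank-one structure of the eigenprojector together with the formula for $\partial P/\partial M_{ij}$ recalled just above. Since $\lambda$ is a simple eigenvalue, the eigenprojector $P=P(M)$ and the left eigenvector $v=v(M)$ (normalized by $v^T u=1$) depend analytically on the entries of $M$ in a neighbourhood of the base point; see~\cite{Kato}. The range of $P$ is the line $\mathbb{R}u$, and $Pu=u$ because $v^T u=1$. I would fix once and for all the vector $w:=u$, the Perron vector at the base point. For $M'$ near $M$ the number $v(M')^Tw$ is close to $v^Tu=1>0$, so $P(M')w=(v(M')^Tw)\,u(M')$ is a positive multiple of $u(M')$; by homogeneity of $N$ and $N(u(M'))=1$ this yields the representation
\begin{equation*}
u(M')=\frac{P(M')\,w}{N\bigl(P(M')\,w\bigr)},
\end{equation*}
valid and differentiable on a neighbourhood of $M$ (there the denominator equals $v(M')^Tw\neq 0$). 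Producing this explicit, neighbourhood-valid formula for the Perron vector --- rather than working with the only implicitly defined eigenvector --- is the one step that really needs care; everything after it is algebra.

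First I would differentiate this identity with respect to $M_{ij}$ at $M'=M$. At the base point $Pw=u$ and $N(Pw)=N(u)=1$, and the derivative of the scalar denominator is $\nabla N(u)^T\,\partial_{ij}(Pw)=\nabla N(u)^T(\partial_{ij}P)\,w$. The quotient rule then gives
\begin{equation*}
\frac{\partial u}{\partial M_{ij}}=(\partial_{ij}P)\,w-\Bigl(\nabla N(u)^T(\partial_{ij}P)\,w\Bigr)\,u.
\end{equation*}

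Next I would substitute $\partial_{ij}P=-S E_{ij}P-P E_{ij}S$ with $E_{ij}=e_ie_j^T$ and simplify using $Pw=Pu=u$ together with $Sw=Su=S(Pu)=(SP)u=0$, the last equality being $SP=0$ from~\eqref{eqn:S}. Then
\begin{equation*}
(\partial_{ij}P)\,w=-S E_{ij}(Pu)-P E_{ij}(Su)=-S E_{ij}u=-S e_i\,(e_j^Tu)=-u_j\,S e_i,
\end{equation*}
and, since $\nabla N(u)^T(\partial_{ij}P)w=-u_j\,\nabla N(u)^T S e_i$, the previous display collapses to
\begin{equation*}
\frac{\partial u}{\partial M_{ij}}=-u_j\,S e_i+u_j\bigl(\nabla N(u)^T S e_i\bigr)\,u,
\end{equation*}
which is the asserted formula. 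An essentially equivalent route avoids the eigenprojector: differentiating $Mu=\lambda u$ gives $(M-\lambda I)\,\partial_{ij}u=(\partial_{ij}\lambda)u-E_{ij}u$; applying $S$ and using $S(M-\lambda I)=I-P$ together with $Su=0$ (which also annihilates the $\partial_{ij}\lambda$ term whatever its value) yields $\partial_{ij}u-(v^T\partial_{ij}u)\,u=-u_j\,S e_i$, and the remaining scalar $v^T\partial_{ij}u$ is pinned down by differentiating the normalization $N(u)=1$ and invoking $\nabla N(u)^Tu=N(u)=1$. In both approaches the only genuine subtlety is the differentiability and the neighbourhood representation of $u(M)$; the rest is substitution of the $\partial_{ij}P$ formula and the identities $Pu=u$, $Su=0$, $\nabla N(u)^Tu=1$.
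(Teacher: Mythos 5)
Your proof is correct: the representation $u(M')=P(M')w/N(P(M')w)$ with $w:=u$ fixed settles differentiability and local well-definedness, and substituting $\partial_{ij}P=-SE_{ij}P-PE_{ij}S$ together with $Pu=u$, $Su=SPu=0$ and $\nabla N(u)^Tu=N(u)=1$ yields exactly the stated formula (your alternative route via differentiating $Mu=\lambda u$ also checks out). The paper cites this proposition from Meyer without giving a proof, but the ingredients it recalls immediately beforehand --- the eigenprojector derivative and the identities~\eqref{eqn:S} --- are precisely the ones you use, so your argument is the intended one.
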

To simplify notations, we denote $\nabla f$ for $\nabla f(u)$ and $\nabla N$ for $\nabla N(u)$.
\begin{cor} \label{prop:gcool} 
Let $M$ be a matrix with a simple eigenvalue $\lambda$ and associated eigenvector $u$
normalized by $N(u)=1$.
The partial derivatives of the function $M \mapsto (f \circ u) (M)$
at $M$ are $g_{ij}=w_i u_j$, where the auxiliary vector $w$ is given by:
\begin{equation*}
 w^T=(-\nabla f^T + (\nabla f \cdot u) \nabla N^T) S = (-\nabla f^T + (\nabla f \cdot u) p^T) (M-\lambda I)^{\#}
\end{equation*}
\end{cor}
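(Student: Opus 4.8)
The strategy is simply to apply the chain rule to $M \mapsto (f\circ u)(M)$ using the explicit formula for $\partial u/\partial M_{ij}$ provided by Proposition~\ref{prop:eigenvectorDer}, and then to recognize the resulting expression as a rank-one matrix $w_i u_j$. First I would write
\[
 g_{ij} = \sum_k \frac{\partial f}{\partial u_k}\,\frac{\partial u_k}{\partial M_{ij}} = \nabla f^T \frac{\partial u}{\partial M_{ij}}
\]
and substitute $\frac{\partial u}{\partial M_{ij}} = -S e_i u_j + (\nabla N^T S e_i u_j)\,u$ from Proposition~\ref{prop:eigenvectorDer}. Pulling the scalar $u_j$ out of both terms gives
\[
 g_{ij} = u_j\Bigl(-\nabla f^T S e_i + (\nabla N^T S e_i)(\nabla f^T u)\Bigr)
        = u_j\Bigl(-\nabla f^T + (\nabla f\cdot u)\,\nabla N^T\Bigr) S e_i .
\]
Setting $w^T := (-\nabla f^T + (\nabla f\cdot u)\nabla N^T) S$, this reads $g_{ij} = u_j\, (w^T e_i) = w_i u_j$, which is the claimed rank-one form.

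The only remaining point is the alternative expression $w^T = (-\nabla f^T + (\nabla f\cdot u)p^T)(M-\lambda I)^{\#}$. Here $S = (M-\lambda I)^{\#}$ by definition, so the two displayed formulas for $w$ are literally the same object; the substitution of $p$ for $\nabla N$ is just the notational convention introduced just before the statement (writing $p := \nabla N(u)$), together possibly with the convention $P = u p^T = u\nabla N(u)^T$. I would remark that since $N$ is homogeneous of degree one, $\nabla N(u)^T u = N(u) = 1$, so $u\,\nabla N(u)^T$ is indeed a projector of rank one onto $\mathrm{span}(u)$, consistent with the earlier identity $P = uv^T$ with the choice $v = \nabla N(u)$; but strictly for the corollary this is not needed — one only needs the bare chain-rule computation above.

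There is essentially no obstacle here: the work was already done in Proposition~\ref{prop:eigenvectorDer}, and the corollary is the one-line consequence of observing that the factor $u_j$ is common to both terms in $\partial u/\partial M_{ij}$, so that $g$ inherits the rank-one structure with row vector $w^T$ obtained by contracting the $M$-independent part against $\nabla f$. The mild care needed is only bookkeeping: keeping track of transposes (so that $w$ comes out as a genuine row vector acting on the left of $S$) and confirming that the scalar $\nabla f\cdot u$ is exactly $\nabla f^T u$ as it appears after distributing. I would present the computation in two short displayed equations as above and then state the identification $g_{ij}=w_iu_j$, noting that this rank-one property is precisely what makes the power-type evaluation in Theorem~\ref{thm:fastDer} possible.
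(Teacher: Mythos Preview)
Your proposal is correct and follows essentially the same route as the paper: apply the chain rule, substitute the formula from Proposition~\ref{prop:eigenvectorDer}, factor out $u_j$, and read off $w^T$. The paper's own proof is in fact just the single displayed coordinate computation you describe; your additional remarks on the $p=\nabla N(u)$ notation are accurate (though your aside identifying $v$ with $\nabla N(u)$ is not quite right---$v$ is the left eigenvector, which need not equal $\nabla N(u)$---but as you note this plays no role in the argument).
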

\begin{proof}
By Proposition \ref{prop:eigenvectorDer}, we deduce that
\begin{align*}
 g_{ij}= \sum_k \frac{\partial f}{\partial u_k}(u(M))\frac{\partial u_k}{\partial M_{ij}}(M)
= -\sum_k \frac{\partial f}{\partial u_k} S_{ki} u_j + \sum_k\frac{\partial f}{\partial u_k} u_k \sum_l \frac{\partial N}{\partial u_l} S_{li} u_j
\end{align*}
which is the developed form the result.
\end{proof}
This simple corollary already improves the computation speed. Using Proposition \ref{prop:eigenvectorDer}
directly, one needs to compute $\frac{\partial P}{\partial M_{ij}}$ in every direction, which means 
the computation of a Drazin inverse and
 $2 n^2$ matrix-vector products involving $M$ for the cmputation of $\left (\frac{\partial u}{\partial M_{ij}} \right)_{i,j \in [n]}$.
With Corollary \ref{prop:gcool} we only need one matrix-vector
product for the same result. The last difficulty is the computation of
the Drazin inverse $S$. In fact, we do not need to compute the 
whole matrix but only to compute $S^Tx$ for a given $x$. The
next two propositions show how one can do it.

\begin{prop}
Let $M$ be a matrix with a simple eigenvalue $\lambda$ and associated eigenvector $u$
normalized by $N(u)=1$.
The auxiliary vector $w$ of Corollary~\ref{prop:gcool} is solution of the following
invertible system
\begin{equation} \label{eqn:wSys}
 [w^T, w_{n+1}] \begin{bmatrix}M-\lambda I & -u \\ \nabla N^T & 0  \end{bmatrix} = [-\nabla f^T , 0 ] \enspace.
\end{equation}
 where $w_{n+1} \in \mathbb{R}$.
\end{prop}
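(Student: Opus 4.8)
The plan is to show that the augmented $(n+1)\times(n+1)$ matrix is invertible and that the unique solution of the linear system~\eqref{eqn:wSys} recovers the vector $w$ from Corollary~\ref{prop:gcool}. First I would rewrite the system row-wise: the equation splits into
\[
 w^T(M-\lambda I) + w_{n+1}\nabla N^T = -\nabla f^T, \qquad -w^T u = 0 .
\]
So the claim is equivalent to: there is a unique pair $(w,w_{n+1})$ satisfying $w^T(M-\lambda I) = -\nabla f^T - w_{n+1}\nabla N^T$ together with $w^T u = 0$, and that $w$ equals $(-\nabla f^T + (\nabla f\cdot u)\nabla N^T)S$.

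To prove invertibility, I would argue that the homogeneous system has only the trivial solution. Suppose $w^T(M-\lambda I) = -w_{n+1}\nabla N^T$ and $w^T u = 0$. Multiply the first identity on the right by $u$: since $(M-\lambda I)u = 0$ we get $0 = -w_{n+1}\nabla N^T u = -w_{n+1}N(u) = -w_{n+1}$, using the homogeneity relation $\nabla N(u)\cdot u = N(u) = 1$. Hence $w_{n+1}=0$ and $w^T(M-\lambda I)=0$, i.e. $w^T$ is a left null vector of $M-\lambda I$; since $\lambda$ is simple this forces $w^T = \alpha v^T$ for some scalar $\alpha$. Then $w^T u = \alpha v^T u = \alpha = 0$ (using the normalization $v^T u = 1$), so $w = 0$. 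Thus the matrix is invertible.

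For the identification step I would verify that the $w$ of Corollary~\ref{prop:gcool}, namely $w^T = (-\nabla f^T + (\nabla f\cdot u)p^T)S$ with $p = \nabla N(u)$, together with a suitable scalar $w_{n+1}$, solves the system. Using the Drazin identities~\eqref{eqn:S}, right-multiplying $w^T$ by $M-\lambda I$ gives $w^T(M-\lambda I) = (-\nabla f^T + (\nabla f\cdot u)p^T)S(M-\lambda I) = (-\nabla f^T + (\nabla f\cdot u)p^T)(I-P)$. Since $P = uv^T$, the term $(-\nabla f^T + (\nabla f\cdot u)p^T)P = (-\nabla f\cdot u + (\nabla f\cdot u)(p^T u))v^T = (-\nabla f\cdot u + \nabla f\cdot u)v^T = 0$ because $p^T u = N(u) = 1$. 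Therefore $w^T(M-\lambda I) = -\nabla f^T + (\nabla f\cdot u)p^T$, which matches the first block equation with $w_{n+1} = -(\nabla f\cdot u)$. The second block equation $w^T u = 0$ follows from $SP = 0$ hence $Su$ is orthogonal to... more directly $w^T u = (-\nabla f^T + (\nabla f\cdot u)p^T)S u$, and $Su = S P u /(v^T u) $... cleanest is to note $PS=0$ is not what we want; instead use that $u$ spans the range of $P$ and $SP=0$: writing $u = Pu$ (true since $Pu = uv^Tu = u$), we get $Su = SPu = 0$, so $w^T u = 0$. This establishes that the displayed pair solves the system, and by the uniqueness just proved it is the only solution, so $w$ in the system coincides with $w$ of Corollary~\ref{prop:gcool}.

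The main obstacle is bookkeeping with the two normalizations: one must use $\nabla N(u)\cdot u = N(u) = 1$ (which came from homogeneity of $N$) in the invertibility argument, and $p^T u = 1$ together with $v^T u = 1$ and the Drazin identities $S(M-\lambda I) = I - P$, $SP = 0$, $Pu = u$ in the identification step. No single step is deep, but the cancellation $-\nabla f\cdot u + (\nabla f\cdot u)(p^Tu) = 0$ that kills the $P$-component — and thus makes the Drazin-inverse expression well-defined as a genuine solution of a plain linear system — is the point that has to be checked carefully.
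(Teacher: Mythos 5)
Your proof is correct and follows essentially the same route as the paper: the same invertibility claim (which you prove in full via the nullspace argument, where the paper only cites Nelson's) and the same identities $\nabla N^T u = N(u)=1$, $v^Tu=1$, $S(M-\lambda I)=I-P$, $SP=0$. The only difference is the direction of the identification step — you verify that the $w$ of Corollary~\ref{prop:gcool} (with $w_{n+1}=-\nabla f\cdot u$) solves the system and invoke uniqueness, whereas the paper starts from the system and derives the formula by right-multiplying by $u$ and by $S$; given invertibility these are equivalent.
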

\begin{proof}
A nullspace argument similar to this of \cite{Nelson-eigenvectorDer} 
shows that $\begin{bmatrix}M-\lambda I & -u \\ \nabla N^T & 0  \end{bmatrix}$
is invertible as soon as $\lambda$ is simple and $\nabla N^T u=1$.
Then the solution $w$ of the system~\eqref{eqn:wSys} verifies the equations $w^T (M-\lambda I) + w_{n+1} \nabla N ^T = -\nabla f^T$ and $w^T u=0$.
Multiplying the first equality by $u$ yields $w_{n+1} \nabla N ^Tu=-\nabla f^T u$
and multiplying it by $S$ yields $w^T (I - u v^T) = - w_{n+1} \nabla N ^T S - \nabla f^T S$.
Putting all together, we get $w^T=(- \nabla f^T +(\nabla f^T u) \nabla N ^T )S$.
\end{proof}

The next proposition provides an iterative scheme to compute the evaluation
of the auxiliary vector~$w$ when we consider the principal eigenvalue.

\begin{defn}
 We say that a sequence $(x_k)_{k \geq 0}$ converges to a point $x$ with a linear convergence rate
$\alpha$ if $\limsup_{k\to \infty} \norm{x_k - x}^{1/k} \leq \alpha$.
\end{defn}

\begin{prop} \label{prop:wFast}
Let $M$ be a matrix with only one eigenvalue of maximal modulus $\rho=\abs{\lambda_1}>\abs{\lambda_2}$.
With the same notations as in Corollary~\ref{prop:gcool}, we denote $\tilde{M}=\frac{1}{\rho}M$ and $z^T=\frac{1}{\rho}(-\nabla f^T + (\nabla f \cdot u) \nabla N^T)$, 
and we fix a real row vector~$w_0$.
Then the fix point scheme defined by
\begin{equation*}
\forall k \in \mathbb{N},\quad w^T_{k+1}= (-z^T + w^T_k \tilde{M})(I-P)
\end{equation*}
with $P=u v^T$, converges to $w^T=(-\nabla f^T + (\nabla f \cdot u) \nabla N^T) (M-\rho I)^{\#}$
with a linear rate of convergence $\frac{\abs{\lambda_2}}{\rho}$.
\end{prop}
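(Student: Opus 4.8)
\emph{Proof idea.} The plan is to read the recursion as an affine fixed-point iteration whose linear part is the \emph{deflated} matrix $\tilde M(I-P)$, and then to extract the rate from the spectrum of that matrix.

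\textbf{Identifying the limit.} Write $S_0 := (\tilde M - I)^{\#}$. Since $\tilde M - I = \tfrac1\rho(M-\rho I)$, the homogeneity $(\alpha A)^{\#}=\tfrac1\alpha A^{\#}$ of the group inverse gives $S_0 = \rho\,(M-\rho I)^{\#}$, hence $z^T S_0 = \tfrac1\rho\bigl(-\nabla f^T + (\nabla f\cdot u)\nabla N^T\bigr)\cdot\rho\,(M-\rho I)^{\#} = w^T$, the claimed limit. It remains to check that $w^T$ is a fixed point. Applying \eqref{eqn:S} to $M-\rho I$ and rescaling yields $S_0(\tilde M-I)=(\tilde M-I)S_0 = I-P$ and $S_0 P = PS_0 = 0$; together with $\tilde M P = P$ (recall $\lambda_1=\rho$) this gives $w^T P = z^T S_0 P = 0$ and $S_0\tilde M = S_0 + (I-P)$. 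Substituting and using $(I-P)^2 = I-P$, $S_0(I-P)=S_0$, one computes $(-z^T + w^T\tilde M)(I-P) = -z^T(I-P) + z^T S_0\tilde M(I-P) = -z^T(I-P) + z^T\bigl(S_0 + (I-P)\bigr)(I-P) = -z^T(I-P) + z^T S_0 + z^T(I-P) = z^T S_0 = w^T$.

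\textbf{Reduction to powers of the deflated matrix.} Setting $e_k^T := w_k^T - w^T$ and subtracting the fixed-point relation from the iteration, linearity gives $e_{k+1}^T = e_k^T\,\tilde M(I-P)$, so $e_k^T = e_0^T B^k$ with $B := \tilde M(I-P) = \tilde M - P$ and $e_0^T = w_0^T - w^T$ arbitrary; thus the rate is governed by the powers of $B$. Now $\tilde M$ commutes with its eigenprojector $P$ for the simple eigenvalue $1$, so $\mathbb{C}^n = \mathrm{Im}\,P \oplus \mathrm{Im}(I-P)$ is a decomposition into $B$-invariant subspaces on which $B$ acts as $0$ and as $\tilde M|_{\mathrm{Im}(I-P)}$ respectively; hence the spectrum of $B$ is $\{0\}\cup\{\lambda_2/\rho,\dots,\lambda_n/\rho\}$ and the spectral radius of $B$ equals $|\lambda_2|/\rho<1$.

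\textbf{Conclusion.} By Gelfand's formula $\lim_k\norm{B^k}^{1/k}$ equals the spectral radius of $B$, i.e.\ $|\lambda_2|/\rho$. Since $\norm{e_k}\le\norm{e_0}\,\norm{B^k}$, we obtain $\limsup_k \norm{e_k}^{1/k}\le |\lambda_2|/\rho$, which is the announced linear convergence rate; in particular $w_k\to w$ for every choice of $w_0$. The only genuinely delicate step is the spectral statement for $B=\tilde M-P$ — that subtracting the rank-one eigenprojector deletes precisely the dominant eigenvalue while leaving the rest of the spectrum of $\tilde M$ intact — which rests entirely on $P$ commuting with $\tilde M$ and $\mathrm{Im}(I-P)$ being the $\tilde M$-invariant complement associated with $\lambda_2,\dots,\lambda_n$; the remainder is bookkeeping with the group-inverse identities \eqref{eqn:S}.
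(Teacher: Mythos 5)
Your proof is correct and follows essentially the same route as the paper's: both arguments rest on the spectral radius of the deflated matrix $\tilde{M}(I-P)=\tilde{M}-P$ being $\abs{\lambda_2}/\rho<1$ together with Gelfand's formula, and on the group-inverse identities \eqref{eqn:S} (plus $z^TP=0$, via $\nabla N^Tu=1$) to identify the limit as $z^T(\tilde{M}-I)^{\#}=(-\nabla f^T+(\nabla f\cdot u)\nabla N^T)(M-\rho I)^{\#}$. The only difference is organizational --- you exhibit the closed-form limit, check it is a fixed point, and contract the error $w_k-w$, whereas the paper unrolls the iteration into a geometric series and identifies its limit afterwards --- which is bookkeeping rather than substance.
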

\begin{proof}
 We have $w^T_{k}= \sum_{l=0}^{k-1} -z^T(\tilde{M} (I-P))^l + w^T_0 (\tilde{M} (I-P))^k$.
 By assumption, all the eigenvalues of $\tilde{M}$ different from $1$ have a modulus smaller than~$1$. Thus, 
using the fact that $P$ is the eigenprojector associated to $1$, we get $\rho(\tilde{M} (I-P))=\frac{\abs{\lambda_2}}{\rho}<1$.
By~\cite{Ostrowski}, $(\norm{\tilde{M} (I-P)}^k)^{1/k} \to \rho(\tilde{M} (I-P))$, so
the algorithm converges to a limit $w$ and for all $\epsilon>0$, $\norm{w_k-w} = \bigO( (\frac{\abs{\lambda_2} + \epsilon}{\rho})^k )$.
This implies a~linear convergence rate equal to $\frac{\abs{\lambda_2}}{\rho}$.
The limit $w$ satisfies $w^T= (-z^T + w^T \tilde{M})(I-P)$, so $w^TP=0$ and as $\tilde{M}P=P$, $w^T\tilde{M}-w^T=z^T(I-P)$.
We thus get the equality $w^T(\tilde{M}-I)=z^T$.
Multiplying both sides by $(\tilde{M}-I)^{\#}$, we get:
\begin{equation*}
  w^T(\tilde{M}-I)(\tilde{M}-I)^{\#}=w^T-w^T P=w^T=z^T(\tilde{M}-I)^{\#}
\end{equation*}
The last equalities and the relation $(\beta^{-1} M)^{\#}=\beta M^{\#}$ show by Proposition~\ref{prop:gcool} that $g=w u^T$ is 
the matrix of partial derivatives of the Perron vector multiplied by~$\nabla f$.
\end{proof}
This iterative scheme uses only matrix-vector products and thus may be very efficient
for a sparse matrix. In fact, it has the same linear convergence rate
as the power method for the computation of the Perron eigenvalue and eigenvector.
This means that the computation of the derivative of the eigenvector has
a computational cost of the same order as the computation of the eigenvector itself.
We next show that the eigenvector and its derivative can be
computed in a single algorithm.

\begin{thm} \label{thm:fastDer}
 If $M$ is a matrix with only one simple eigenvalue of maximal modulus $\rho=\abs{\lambda_1}>\abs{\lambda_2}$,
then the derivative $g$ of the function $f \circ u$ at $M$, such that 
$g_{ij}=\sum_k \frac{\partial f}{\partial u_{k}} \frac{\partial u_k}{\partial M_{ij}}$
is the limit of the sequence $(\tilde{w}_l u_l^T)_{l \geq 0}$ given by the following iterative scheme:
\begin{align*}
 u_{l+1}&=\frac{M u_l}{N(M u_l)}\\
 v_{l+1}^T&=\frac{v_l^T M}{v_l^T M u_{l+1}}\\
 \tilde{w}_{l+1}^T&=\frac{1}{\rho_l}(\nabla f_l^T - (\nabla f_l \cdot u_l) \nabla N_l^T + \tilde{w}_l^T M) (I-u_{l+1} v_{l+1}^T)
\end{align*}
where $\rho_l=N(M u_l)$, $\nabla f_l=\nabla f(u_l)$ and $\nabla N_l=\nabla N(u_l)$.
Moreover, the sequences $(u_l)$, $(v_l)$ and $(\tilde{w}_l)$ converge linearly with rate $\frac{\abs{\lambda_2}}{\rho}$.
\end{thm}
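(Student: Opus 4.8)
The plan is to read the scheme as the superposition of the ordinary power iteration, which produces $(u_l)$ and $(v_l)$, with an \emph{inexact} form of the fixed‑point recursion of Proposition~\ref{prop:wFast} in which the exact data $\rho$, $P=uv^T$, $\nabla f$, $\nabla N$ are, at step $l$, replaced by the current estimates $\rho_l$, $u_{l+1}v_{l+1}^T$, $\nabla f_l$, $\nabla N_l$. Throughout put $\beta:=\abs{\lambda_2}/\rho<1$.

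First I would handle the power part. Since $M$ has a unique and simple eigenvalue of maximal modulus, the standard power‑method analysis (as in the proof of Proposition~\ref{prop:wFast}, through $\rho(\tilde{M}(I-P))=\beta$) shows that, for generic initializations, $u_l\to u$ and $v_l\to v$ with linear rate $\beta$, and that the prescribed normalizations are preserved ($N(u_l)=1$, $v_{l+1}^Tu_{l+1}=1$). Applying the smooth maps $N$, $\nabla N$ and $\nabla f$ then yields $\rho_l=N(Mu_l)\to\rho>0$, $u_{l+1}v_{l+1}^T\to P$, $\nabla f_l\to\nabla f$ and $\nabla N_l\to\nabla N$, all with linear rate $\beta$.

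Next I would single out the third line of the scheme. Regrouping, it reads $\tilde w_{l+1}^T=b_l^T+\tilde w_l^TT_l$ with
\[
T_l:=\tfrac{1}{\rho_l}M\bigl(I-u_{l+1}v_{l+1}^T\bigr),\qquad b_l^T:=\tfrac{1}{\rho_l}\bigl(\nabla f_l^T-(\nabla f_l\cdot u_l)\nabla N_l^T\bigr)\bigl(I-u_{l+1}v_{l+1}^T\bigr).
\]
By the previous paragraph, $T_l\to T:=\tilde{M}(I-P)$ and $b_l^T\to b^T:=-z^T(I-P)$ with rate $\beta$, where $z$ is the vector of Proposition~\ref{prop:wFast}. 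Since $\rho(T)=\beta<1$, $I-T$ is invertible on row vectors, so the limiting equation $w^T=b^T+w^TT$ has the unique solution $w^T=b^T(I-T)^{-1}$; by Proposition~\ref{prop:wFast} this $w$ equals $(-\nabla f^T+(\nabla f\cdot u)\nabla N^T)(M-\rho I)^{\#}$, hence $wu^T=g$ by Corollary~\ref{prop:gcool}. It therefore suffices to prove $\tilde w_l\to w$ at rate $\beta$: then $\tilde w_lu_l^T-wu^T=(\tilde w_l-w)u_l^T+w(u_l-u)^T\to0$ at rate $\beta$ as well, $(u_l)$ being bounded.

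The remaining and central step is a perturbed‑contraction estimate for $e_l^T:=\tilde w_l^T-w^T$. Subtracting the fixed‑point identity from the recursion gives $e_{l+1}^T=e_l^TT_l+r_l^T$ with $r_l^T:=(b_l^T-b^T)+w^T(T_l-T)$, so $\norm{r_l}=\bigO((\beta+\epsilon)^l)$ for every $\epsilon>0$. Since $\rho(T)<1$, fix a small $\epsilon>0$ with $\beta+2\epsilon<1$ and a submultiplicative norm in which $\norm{T}\le\beta+\epsilon$; then $\norm{T_l}\le\beta+2\epsilon$ for $l$ large, so $\norm{e_{l+1}}\le(\beta+2\epsilon)\norm{e_l}+\norm{r_l}$ with $\norm{r_l}=\bigO((\beta+\epsilon)^l)$. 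Unrolling this scalar inequality and summing the resulting series — the slack $\beta+\epsilon<\beta+2\epsilon$ makes it a convergent geometric series rather than an arithmetic one — gives $\norm{e_l}=\bigO((\beta+2\epsilon)^l)$, whence $\limsup_l\norm{e_l}^{1/l}\le\beta+2\epsilon$; letting $\epsilon\to0$ yields the announced rate $\beta$. I expect the delicate point to be exactly this interplay: the data error $r_l$ decays at the same nominal rate as the contraction of the $\tilde w$‑iteration, so one must exploit the $\epsilon$‑slack built into the $\limsup\norm{\cdot}^{1/l}$ notion of linear rate to separate the two; a secondary technical point is producing the contracting norm for $T$, hence for all $T_l$ eventually. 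If $f$ and $N$ are only $C^1$, the same telescoping argument with merely $\norm{r_l}\to0$ still gives $\tilde w_l\to w$, hence plain convergence of $\tilde w_lu_l^T$ to $g$.
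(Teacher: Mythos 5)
Your proof is correct and rests on the same ingredients as the paper's (power-method rates for $u_l,v_l$, a norm in which $\tilde M(I-P)$ is a contraction, and Proposition~\ref{prop:wFast} to identify the limit), but the bookkeeping is organized differently, and in a way that buys you something. The paper keeps the \emph{exact} operator $\tilde M(I-P)$ in the recursion and pushes all discrepancies into a perturbed source term $\tilde z_l$; since $\tilde z_l$ contains $\tilde w_l$ itself, this forces a separate preliminary step proving that $(\tilde w_l)$ is bounded, and the unrolled sum $\sum_k(\tilde z_{l-1-k}-z)^T(\tilde M(I-P))^k$ is estimated termwise, giving the slightly lossy bound $\bigO(l\,\gamma^{l-1})$ before being reabsorbed into $\bigO((\beta+\epsilon')^l)$. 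You instead keep the \emph{perturbed} operator $T_l$ and subtract the fixed-point identity $w^T=b^T+w^TT$, obtaining $e_{l+1}^T=e_l^TT_l+r_l^T$ where the inhomogeneity $r_l^T=(b_l-b)^T+w^T(T_l-T)$ involves only the fixed vector $w$; boundedness of $\tilde w_l$ then comes for free from the contraction inequality, and the convolution of two geometric sequences with distinct ratios ($\beta+\epsilon$ versus $\beta+2\epsilon$) gives a clean $\bigO((\beta+2\epsilon)^l)$ bound. Both routes land on $\limsup_l\norm{e_l}^{1/l}\leq\beta$, which is exactly the paper's notion of linear rate $\abs{\lambda_2}/\rho$; your identification of the limit via invertibility of $I-T$ together with Proposition~\ref{prop:wFast} and Corollary~\ref{prop:gcool} correctly yields $\lim\tilde w_lu_l^T=g$. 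The one point you flag as delicate — that the data error decays at the same nominal rate as the contraction — is indeed the crux in both arguments, and your $\epsilon$-slack treatment handles it properly.
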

Of course, the first and second sequences are the power method to the right and to the left.
The third sequence is a modification the scheme of Proposition~\ref{prop:wFast}
with currently known values only. We shall denote one iteration of the
scheme of the theorem as 
\[
(u_{k+1}, v_{k+1}, \tilde{w}_{k+1})=\mathrm{POWERDERIVATIVE_M}(u_k, v_k, \tilde{w}_k) \enspace . 
\]

\begin{proof}
The equalities giving $u_{l+1}$ and $v_{l+1}$ are simply the usual power method, so by \cite{ParlettPoole-power},
they convergence linearly with rate $\frac{\abs{\lambda_2}}{\rho}$
to $u$ and $v$, the right and left principal eigenvectors of $M$, such that $P=uv^T$ is the eigenprojector associated to $\rho(M)$.
Let 
\[
z_l^T:=\frac{1}{\rho_l}(-\nabla f_l^T + (\nabla f_l \cdot u_l) \nabla N_l^T) \enspace:
\]
$\lim z_l = z$ by continuity of $\nabla f$, $N$ and $\nabla N$ at $u$. We also have
\[
 \tilde{w}_l^T=(-z_l^T+\frac{1}{\rho_l} \tilde{w}_l^T M)(I-u_{l+1}v_{l+1}^T) \enspace .
\]
We first show that $\tilde{w}_l$ is bounded. As in the proof of Proposition~\ref{prop:wFast}, 
$\rho(\tilde{M}(I-P))=\frac{\abs{\lambda_2}}{\rho}<1$.
Thus, by Lemma~5.6.10 in~\cite{HornJohnson-MatrixAnalysis}, there exists a norm $\norm{\cdot}_M$ and $\alpha<1$ such that
$\tilde{M}(I-P)$ is $\alpha$-contractant. Let $S$ be the unit sphere: $\forall x \in S, \norm{\tilde{M}(I-P)x}_M \leq \alpha \norm{x}_M$.
By continuity of the norm, $\forall \epsilon >0, \exists L, \forall l\geq L, \forall x \in S,
\norm{\frac{1}{\rho_l}M(I-u_{l+1}v_{l+1}^T)x}_M \leq (\alpha+\epsilon) \norm{x}_M$.
As $\frac{1}{\rho_l}M(I-u_{l+1}v_{l+1}^T)$ is linear, we have the result on the whole $\mathbb{R}^n$ space.
Thus $\tilde{w}_l$ is bounded.

Let us denote $\tilde{M}=\frac{1}{\rho}M$ and
\[
 \tilde{z}_l^T:=z_l^T (I-u_{l+1}v_{l+1}^T)+\frac{1}{\rho_l}\tilde{w}_l^T M (u v^T-u_{l+1}v_{l+1}^T)+
\frac{\rho-\rho_l}{\rho \rho_l}\tilde{w}_l^T M (I-uv^T)\enspace,
\]
so that $\tilde{w}_{l+1}^T=-\tilde{z}_l^T+\tilde{w}_l^T \tilde{M}(I-uv^T)$.
%
We have:
\begin{align*}
 \tilde{w}_l^T&= \tilde{w}_0^T (\tilde{M} (I-P))^l  - \sum_{k=0}^{l-1} \tilde{z}_{l-1-k}^T (\tilde{M} (I-P))^k \\
& =\tilde{w}_0^T (\tilde{M} (I-P))^{l} - \sum_{k=0}^{l-1} z^T (\tilde{M} (I-P))^{k} - \sum_{k=0}^{l-1} (\tilde{z}_{l-1-k}^T-z^T) (\tilde{M} (I-P))^{k}
\end{align*}

By Proposition~\ref{prop:wFast}, the sum of the first and second summand correspomd to $w_l$ and converge linearly to $w^T$ when $l$ tends to infinity with 
convergence rate $\frac{\abs{\lambda_2}}{\rho}$. Corollary~\ref{prop:gcool} asserts that $g=\lim w_l u_l^T$.
%
%
For the last one, we remark that for all $\epsilon>0$,
$\norm{(\tilde{M} (I-P))^{k}} = \bigO(\frac{\abs{\lambda_2 +\epsilon }}{\rho})^k$.
In order to get the convergence rate of the sequence, we need to estimate $\norm{\tilde{z}_l-z} = \norm{\tilde{z}_l -z_l + z_l-z}$.
\[
 \norm{\tilde{z}_l-z} \leq \norm{(z_l^T u_{l+1}) v_{l+1}^T} + \norm{\frac{1}{\rho_l}\tilde{w}_l^T M (u v^T-u_{l+1}v_{l+1}^T)} + \norm{\frac{\rho-\rho_l}{\rho \rho_l}\tilde{w}_l^T}+ \norm{z_l-z}
\]
The second and third summands are clearly $\bigO((\frac{\abs{\lambda_2+\epsilon}}{\rho})^l)$ .
For the first summand, as $\nabla N_l^T u_{l}=1$, we have
\begin{align*}
\abs{z_l^T u_{l+1}}& =\abs{\frac{1}{\rho_l}(-\nabla f_l^T u_{l+1} (\nabla f_l^T u_l)(\nabla N_l^T u_{l+1}))}  \\
& \leq \frac{1}{\rho_l} (\sup_{l \geq 0} \norm{\nabla f^T_l} + \sup_{l \geq 0} \norm{\nabla N^T_l}) \norm{u_{l+1}-u_l} \enspace.
\end{align*}
As $(u_l)_{l\geq0}$ is bounded and $f$ and $N$ are $C^1$, the constant 
is finite and $\abs{z_l^T u_{l+1}}\norm{v_{l+1}} = \bigO ((\frac{\abs{\lambda_2+\epsilon}}{\rho})^l)$.
With similar arguments, we also show that $\norm{z_l-z} = \bigO ((\frac{\abs{\lambda_2+\epsilon}}{\rho})^l)$

Finally, we remark that for all $k$, $(\tilde{z}_{l-1-k}^T-z^T) (\tilde{M} (I-P))^{k}= 
\bigO ((\frac{\abs{\lambda_2+\epsilon}}{\rho})^{l-1})$.
Thus 
\[
\sum_{k=0}^{l-1} (\tilde{z}_{l-1-k}^T-z^T) (\tilde{M} (I-P))^{k} = \bigO (l(\frac{\abs{\lambda_2+\epsilon}}{\rho})^{l-1})= \bigO ((\frac{\abs{\lambda_2}+\epsilon'}{\rho})^l) \enspace,
\]
for all $\epsilon'>\epsilon$. The result follows.
\end{proof}

\begin{remark}
All this applies easily to a nonnegative irreducible matrix $M$. Let $\rho$ be its principal eigenvalue:
it is simple thanks to irreducibility. The spectral gap assumption $\rho>\abs{\lambda_2}$
 is guaranteed by an additionnal aperiodicity assumption. 
Let $v$ and $u$ be the left and right eigenvectors of $M$ for the eigenvalue $\rho$.
We normalize $u$ by $N(u) =1$ where $N$ verifies $\frac{\partial N}{\partial u}(u) \geq 0$ and $N(\lambda u)=\lambda N(u)$ 
(which implies $\frac{\partial N}{\partial u}(u) \cdot u =N(u)$) and $v$ by $\sum_i u_i v_i =1$.
As $u>0$, any normalization such that $p=\frac{\partial N}{\partial u}(u) \geq 0$ is satisfactory:
for instance, we could choose $N(u)=\norm{u}_1=\sum_i u_i$, $N(u)=\norm{u}_2$ or $N(u)=u_1$.
\end{remark}

\begin{remark} \label{rem:grad}
Theorem~\ref{thm:fastDer} gives the possibility of performing a gradient algorithm for
Perron vector optimization. Fix $\epsilon$ and apply recursively the power-type iterations $\mathrm{POWERDERIVATIVE_M}$
until $\norm{u_l - u_{l+1}}+\norm{\tilde{w}_l - \tilde{w}_{l+1}}\leq \epsilon$.
Then we use $\tilde{w}_l u_l$ as the descent direction of the algorithm.
The gradient algorithm will stop at a nearly stationary point,
the smaller $\epsilon$ the better.
In order to accelerate the algorithm, we can initialize the
recurrence with former values of $u_l$, $v_l$ and $\tilde{w}_l$.
\end{remark}

\section{Coupling gradient and power iterations} \label{sec:approxGradient}

We have given in Theorem~\ref{thm:fastDer} an algorithm that gives the derivative of the objective function
at the same computational cost as the computation of the value of the function.
As the problem is a differentiable optimization problem,
we can perform any classical optimization algorithm: 
see~\cite{BGLS-numericalOpt, Bert-nonlinear, Nocedal-numericalOpt} for references.

When we consider relaxations of HITS autority or HOTS optimization problems,
that we define in Sections~\ref{sec:HITS} and~\ref{sec:HOTS},
the constraints on the adjacency matrices are very easy to deal with, so
a projected gradient algorithm as described in \cite{Ber-gradientAlg} will be efficient.
If the problem has not a too big size, it is also possible to set a second
order algorithm.
However, matrices arising from web applications are large: as the Hessian matrix is a full $n^2 \times n^2$ matrix,
it is then difficult to work with.

In usual algorithms, the value of the objective function
must be evaluated at any step of the algorithm.
As stressed in~\cite{Overton-largeScale}, there are various possibilities for the computation
of the eigenvalue and eigenvectors. Here, we consider sparse nonnegative matrices with a simple principal eigenvalue:
the power method applies and, unlike direct methods or inverse iterations, it only needs matrix-vector products,
which is valuable with a large sparse matrix.
Nevertheless for large matrices, repeated principal eigenvector and eigenvalue
determinations can be costly.
Hence, we give a first order descent algorithm designed to find stationary points of 
Perron eigenvector optimization problems, that uses approximations of
the value of the objective and of its gradient instead of the precise values. Then we can
interrupt the computation of the eigenvector and eigenvalue when necessary
and avoid useless computations. Moreover, as the objective is evaluated as the limit
of a sequence, its exact value is not available in the present context.

The algorithm consists of a~coupling of the power iterations and of the gradient algorithm with Armijo
line search along the projected arc~\cite{Ber-gradientAlg}. We recall
this gradient algorithm in Algorithm~\ref{alg:Armijo}.
We shall, instead of comparing the exact values of the function, compare
upper and lower bounds computed during the course of the power iterations.

\begin{algorithm}
\caption{Gradient algorithm with Armijo line search along the projected arc~\cite{Ber-gradientAlg}}
Let a differentiable function $J$, a~convex admissible set $\C$
and an initial point $x_0 \in \C$ and parameters $\sigma \in (0,1)$, $\alpha^0>0$ and $\beta \in (0,1)$.
The algorithm is an iterative algorithm defined for all $k \in \mathbb{N}$ by
\[
 x_{k+1}=\mathrm{P}_\C(x_k-\alpha_k \nabla J(x_k) )
\]
and $\alpha_k=\beta^{m_k} \alpha^0$ where $m_k$ is the first nonnegative integer $m$ such that
\[
J\left (\mathrm{P}_\C(x_k- \beta^{m} \alpha^0 \nabla J(x_k))\right )- J(x_k) \leq
-\sigma \frac{\norm{x_k - \mathrm{P}_\C(x_k- \beta^{m} \alpha^0\nabla J(x_k))}_2^2}{\beta^{m} \alpha^0}
\]
\label{alg:Armijo}
\end{algorithm}

If we had an easy access to the exact value of $u(M)$ for all $M \in h(\C)$, we could use
the gradient algorithm with Armijo line search along the projected arc with $J=f \circ u \circ h$
to find a stationary point of Problem~\eqref{eq:perronVecOpt}.
But when computing the Perron eigenvector by an iterative scheme like in Theorem~\ref{thm:fastDer},
we only have converging approximations of the value of the objective and of its gradient.
The theory of consistent approximation, developped in~\cite{Polak-consistent} proposes algorithms and
convergence results for such problems. If the main applications of consistent approximations
are optimal control and optimal control of partial derivative equations, it is also useful
for problems in finite dimension where the objective is difficult to compute~\cite{Pironneau-ApproxGrad}.

A consistent approximation of a given optimization problem is
a sequence of computationally tractable problems that converge
to the initial problem in the sense that the stationary points
of the approximate problems
converge to stationary points of the original problem.
The theory provides master algorithms that construct a consistent approximation,
initialize a nonlinear programming algorithm on this approximation and
terminate its operation when a precision (or discretization) improvement test is satisfied.

We consider the Perron vector optimization problem defined in~\eqref{eq:perronVecOpt}
\begin{equation*}
 \min_{x \in \C} J(x)= \min_{x \in \C} f \circ u \circ h(x) \enspace .
\end{equation*}
For $x \in \C$, $n\in \mathbb{N}$ and for arbitrary fixed vectors $u_0$, $v_0$ and $\tilde{w}_0$, we shall approximate 
with order $\Delta(n)$
the Perron vectors of $h(x)$, namely $u$ and $v$ and the auxiliary vector $w$ of Corollary~\ref{prop:gcool}
by 
\begin{equation} \label{eq:powerderivativen}
 (u_{k_n}, v_{k_n}, \tilde{w}_{k_n}):=(\mathrm{POWERDERIVATIVE_{h(x)}})^{k_n}(u_0, v_0, \tilde{w}_0) \enspace ,
\end{equation}
where $k_n$ is the first nonnegative integer $k$ such that
\begin{equation} \label{eq:kn}
 \norm{(u_{k+1}, v_{k+1}, \tilde{w}_{k+1}) - (u_{k}, v_{k}, \tilde{w}_{k})} \leq \Delta(n)
\end{equation}

The map $\mathrm{POWERDERIVATIVE}$ is defined in Theorem~\ref{thm:fastDer}.
Then the degree $n$ approximation of the objective function $J$ and of its gradient $\nabla J$ are given by
\begin{align} \label{eq:approx}
 J_n(x)=f(u_{k_n})\;,  \qquad g_n(x)=\sum_{i,j \in [n]} \tilde{w}_{k_n}(i) \nabla h_{i,j}(x) u_{k_n}(j)
\end{align}

An alternative approach, proposed in~\cite{Pironneau-ApproxGrad}, is to approximate $(u, v, w)$ by 
the $n^{th}$ iterate $(u_n, v_n, \tilde{w}_n):=(\mathrm{POWERDERIVATIVE_{h(x)}})^{n}(u_0, v_0, \tilde{w}_0)$.
We did not choose this approach since it does not take into account efficiently hot started power iterations.

We define an approximate gradient step with Armijo line search in Algorithm~\ref{alg:ArmijoApprox}.

\begin{algorithm}
\caption{Approximate Armijo line search along the projected arc}
Let $(\bar{M}_n)_{n\geq0}$ be a~sequence diverging to $+\infty$, 
$\sigma \in (0,1)$, $\alpha^0>0$, $\beta \in (0,1)$ and $\gamma>0$.
Given $n \in \mathbb{N}$, $J_n$ and $g_n$ are defined in~\eqref{eq:approx}.
For $x \in \C$,
the algorithm returns $A_n(x)$ defined as follows.
If for all nonnegative integer $m$ smaller than $\bar{M}_n$,
\[
J_n\left (\mathrm{P}_\C(x- \beta^{m} \alpha^0 g_n (x))\right )- J_n(x) >
-\sigma \frac{\norm{x - \mathrm{P}_\C(x- \beta^{m} \alpha^0 g_n (x))}_2^2}{\beta^{m} \alpha^0}
\]
then we say that the line search has failed and we set $A_n(x)=\emptyset$.
Otherwise, 
let $m_n$ be the first nonnegative integer $m$ such that
\[
 J_n\left (\mathrm{P}_\C(x- \beta^{m} \alpha^0 g_n (x))\right ) -J_n(x)\leq
-\sigma \frac{\norm{x - \mathrm{P}_\C(x- \beta^{m} \alpha^0 g_n (x))}_2^2}{\beta^{m} \alpha^0}
\]
and define the next iterate $A_n(x)$ to be $A_n(x)=\mathrm{P}_\C(x- \beta^{m_n} \alpha^0 g_n(x) )$.
\label{alg:ArmijoApprox}
\end{algorithm}

Then we shall use the Approximate Armijo line search along the projected arc $A_n$
in the following Master Algorithm Model (Algorithm~\ref{alg:Master}).

\begin{algorithm}
\caption{Master Algorithm Model 3.3.17 in \cite{Polak-consistent}}
Let $\omega\in (0,1)$, $\sigma' \in (0,1)$, $n_{-1} \in \mathbb{N}$ and $x_0 \in \C$,
$\mathcal{N}= \{ n \; | \; A_{n}(x) \not= \emptyset\;, \; \forall x \in C \}$ 
and $(\Delta(n))_{n\geq0}$ be a~sequence converging to $0$.

For $i \in \mathbb{N}$, compute iteratively the smallest $n_i \in \mathcal{N}$ and $x_{i+1}$ such that $n_i \geq n_{i-1}$,
 \begin{align*}
  x_{i+1} \in A_{n_i}(x_i) \qquad \text{and} \\
 J_{n_i}(x_{i+1}) - J_{n_i}(x_i) \leq - \sigma' \Delta(n_i)^{\omega}
\end{align*}
\label{alg:Master}
\end{algorithm}

In order to prove the convergence of the Master Algorithm Model (algorithm~\ref{alg:Master}) 
when used with the Approximate Armijo line search
(Algorithm~\ref{alg:ArmijoApprox}), we need the following lemma.
\begin{lemma} \label{lem:ArmijoApproxGood}
For all $x^* \in \C$ which is not stationary, there exists $\rho^*>0$, $\delta^*>0$ and $n^* \in \mathbb{N}$ such that for all $n \geq n^*$,
and for all $x \in \mathrm{B}(x^*, \rho^*) \cap \C$, $A_n(x) \not = \emptyset$ and
\[
 J_n(\left (\mathrm{P}_\C(x- \alpha_n g_n(x))\right )- J_n(x) \leq - \delta^*
\]
where $\alpha_n$ is the step length returned by the Approximate Armijo line search $A_n(x)$ (Algorithm~\ref{alg:ArmijoApprox}).
\end{lemma}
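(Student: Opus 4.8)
\emph{Proof plan.} The statement is the ``approximate line search makes uniform progress near a non-stationary point'' step of the consistent-approximation framework, so the plan is to establish two ingredients and then run the standard argument. The first ingredient is that the approximations $J_n$ and $g_n$ converge, \emph{uniformly on a neighbourhood of $x^*$}, to the continuously differentiable functions $J=f\circ u\circ h$ and $\nabla J$. The second is the classical fact that, since $x^*$ is not stationary, the exact Armijo rule along the projected arc (Algorithm~\ref{alg:Armijo} with objective $J$) produces at $x^*$ a strictly negative decrease, and in fact one that stays bounded away from $0$ on a small ball around $x^*$. Granting these, the approximate test succeeds within $\bar M_n$ backtracking steps once $n$ is large, and the realized decrease is bounded away from $0$.

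For the first ingredient I would argue as follows. Near $x^*$ the matrix $h(x)$ has a simple dominant eigenvalue whose spectral gap $\abs{\lambda_2(h(x))}/\rho(h(x))$ is continuous and $<1$, hence $\leq\gamma<1$ on a ball $\mathrm B(x^*,\rho_1)\cap\C$. By the estimates used in the proofs of Proposition~\ref{prop:wFast} and Theorem~\ref{thm:fastDer}, the iterates $(u_k,v_k,\tilde w_k)=(\mathrm{POWERDERIVATIVE}_{h(x)})^k(u_0,v_0,\tilde w_0)$ and their successive increments decay linearly at a rate governed by $\gamma$, with constants depending continuously, hence locally uniformly, on $x$ through $f$, $N$, $\nabla N$, $\nabla f$ and the (bounded) orbit. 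Consequently the stopping index $k_n(x)$ of~\eqref{eq:kn} is finite and bounded uniformly in $x$, the residuals $\norm{u_{k_n(x)}-u(h(x))}$ and $\norm{\tilde w_{k_n(x)}-w}$ are $\bigO(\Delta(n))$ uniformly in $x$, and therefore, by continuity of $f$ and $\nabla h$ and Corollary~\ref{prop:gcool},
\[
 \sup_{x\in\mathrm B(x^*,\rho_1)\cap\C}\bigl(\abs{J_n(x)-J(x)}+\norm{g_n(x)-\nabla J(x)}\bigr)=\bigO(\Delta(n))\xrightarrow[n\to\infty]{}0 .
\]
I expect this to be the main obstacle: $J_n$ is itself discontinuous in $x$ (it depends on $x$ through the data-dependent index $k_n(x)$), so the convergence cannot be obtained by a soft argument and genuinely needs the quantitative linear rate of Theorem~\ref{thm:fastDer} together with $\Delta(n)\to0$.

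For the second ingredient and the conclusion I would proceed like this. Since $x^*$ is not stationary, $x^*\neq\mathrm P_\C(x^*-s\nabla J(x^*))$ for every $s>0$, and one can fix a backtracking index $\bar m$ at which the exact Armijo test holds \emph{with strict inequality} at $x^*$ (such an index exists by the descent lemma). By continuity of $J$, $\nabla J$ and $\mathrm P_\C$ and by the uniform convergence just obtained, there are $\rho^*\in(0,\rho_1]$ and $n_1$ such that for all $n\geq n_1$ and all $x\in\mathrm B(x^*,\rho^*)\cap\C$ the approximate test of Algorithm~\ref{alg:ArmijoApprox} also holds at index $\bar m$:
\[
 J_n\!\bigl(\mathrm P_\C(x-\beta^{\bar m}\alpha^0 g_n(x))\bigr)-J_n(x)\leq-\sigma\,\frac{\norm{x-\mathrm P_\C(x-\beta^{\bar m}\alpha^0 g_n(x))}_2^2}{\beta^{\bar m}\alpha^0}.
\]
Because $\bar M_n\to+\infty$ I can choose $n^*\geq n_1$ with $\bar M_{n^*}\geq\bar m$; then for $n\geq n^*$ the line search does not fail, so $A_n(x)\neq\emptyset$, and it stops at some index $m_n\leq\bar m$, i.e.\ with step $\alpha_n=\beta^{m_n}\alpha^0\geq\beta^{\bar m}\alpha^0$. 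Using the classical monotonicity of $s\mapsto\norm{x-\mathrm P_\C(x-s\,g_n(x))}$ in $s\geq0$, and shrinking $\rho^*$ so that by continuity and non-stationarity $\norm{x-\mathrm P_\C(x-\beta^{\bar m}\alpha^0 g_n(x))}\geq c_0$ for some $c_0>0$ and all $n\geq n^*$, I get $\norm{x-A_n(x)}_2\geq c_0$; hence from the Armijo inequality at $m_n$ and $\alpha_n\leq\alpha^0$,
\[
 J_n\!\bigl(\mathrm P_\C(x-\alpha_n g_n(x))\bigr)-J_n(x)\leq-\sigma\,\frac{\norm{x-A_n(x)}_2^2}{\alpha_n}\leq-\frac{\sigma c_0^2}{\alpha^0}=:-\delta^* ,
\]
which is the asserted estimate. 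Everything outside the first ingredient is the standard gradient-projection / consistent-approximation bookkeeping.
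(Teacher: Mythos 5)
Your proof is correct in outline but takes a genuinely different route from the paper's. The paper never fixes a backtracking index in advance: it first rules out $A_n(x)=\emptyset$ by a contradiction argument using only \emph{pointwise} convergence $J_{\phi_n}(x)\to J(x)$, $g_{\phi_n}(x)\to\nabla J(x)$, then studies the accumulation points $\bar\alpha$ of the step sequence $(\alpha_n)$, sandwiches them between the step $\alpha_{\mathrm{strict}}$ of a ``strict-inequality'' variant of the exact Armijo rule and the exact Armijo step, imports from~\cite{Pironneau-ApproxGrad} the uniform decrease achieved by $\alpha_{\mathrm{strict}}$ near a non-stationary point, and transfers it to $\bar\alpha$ via Lemma~3 of~\cite{Ber-gradientAlgImproved} (monotonicity of $s\mapsto\norm{x-\mathrm{P}_\C(x-s\nabla f(x))}^2/s$). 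You instead front-load everything into a locally uniform convergence statement for $J_n$ and $g_n$, fix one index $\bar m$ at which the exact test holds strictly at $x^*$, and deduce the explicit uniform lower bound $\alpha_n\geq\beta^{\bar m}\alpha^0$ together with $\norm{x-A_n(x)}\geq c_0$ from the elementary monotonicity of $s\mapsto\norm{x-\mathrm{P}_\C(x-sg)}$; the final estimate $-\sigma c_0^2/\alpha^0$ then drops out directly. Your bookkeeping is more elementary and avoids the strict/non-strict Armijo detour and the subsequence analysis; the price is that you genuinely need $g_n\to\nabla J$ \emph{uniformly} on a ball, whereas the paper's contradiction step only invokes pointwise limits. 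That uniform convergence of $g_n$ (i.e.\ of $\tilde w_{k_n}$, not just of $u_{k_n}$) is the one place where your argument is not fully discharged: the paper itself only proves the uniform bound $\abs{J-J_n}\leq K\Delta(n)$ (via Proposition~\ref{prop:boundVecNonsym}, in the proof of Theorem~\ref{thm:approxGrad}) and is equally terse about uniformity of the gradient approximation over $\mathrm{B}(x^*,\rho^*)$, so you are no worse off than the original — but if you want your version to be airtight you should spell out why the constants in Theorem~\ref{thm:fastDer}'s linear rate can be chosen uniformly on a compact neighbourhood where the spectral gap stays bounded away from~$1$.
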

\begin{proof}
 Let $x\in \C$. Suppose that there exists an
infinitely growing sequence $(\phi_n)_{n \geq 0}$ such that
$A_{\phi_n}(x) = \emptyset$ for all $n$.
Then for all $m \leq \bar{M}_{\phi_n}$,
\[
J_{\phi_n}\left (\mathrm{P}_\C(x- \beta^{m} \alpha^0 g_{\phi_n} (x))\right )- J_{\phi_n}(x) >
 -\sigma  \langle g_{\phi_n} (x), x - \mathrm{P}_\C(x- \beta^{m} \alpha^0 g_{\phi_n} (x)) \rangle
\]
When $n\to +\infty$, $\bar{M}_{\phi_n} \to +\infty$, $J_{\phi_n}(x) \to J(x)$ and $g_{\phi_n} (x) \to \nabla f (x)$ (Theorem~\ref{thm:fastDer}),
so we get that for all $m \in \mathbb{N}$, $J\left (\mathrm{P}_\C(x_k- \beta^{m} \alpha^0 \nabla J(x_k))\right )- J(x_k) \geq
 -\sigma \frac{\norm{x_k - \mathrm{P}_\C(x_k- \beta^{m} \alpha^0\nabla J(x_k))}_2^2}{\beta^{m} \alpha^0}$,
which is impossible by~\cite{Ber-gradientAlg}.

So suppose $n \in \mathbb{N}$ is sufficiently large so that $A_{_n}(x) \not = \emptyset$.
Let $\alpha_n$ the step length determined by Algorithm~\ref{alg:ArmijoApprox} 
and let $\alpha$ be the step length determined by Algorithm~\ref{alg:Armijo} at $x$.
We have:
\begin{align*}
 J_n\left (\mathrm{P}_\C(x- \alpha_n g_n (x))\right ) -J_n(x)\leq
 -\sigma \frac{\norm{x - \mathrm{P}_\C(x- \alpha_n g_n (x))}_2^2}{\alpha_n}
\end{align*}
and if $\alpha_n \not = \alpha^0$,
\begin{align*}
J_n\left (\mathrm{P}_\C(x- \beta^{-1} \alpha_n g_n(x))\right )- J_n(x) >
 -\sigma \frac{\norm{x - \mathrm{P}_\C(x- \beta^{-1} \alpha_n g_n(x))}_2^2}{\beta^{-1} \alpha_n}
\end{align*}
$(\alpha_n)_{n \geq 0}$ is a bounded sequence so it has a subsequence $(\alpha_{\phi_n})_{n \geq 0}$ converging to, say, $\bar{\alpha}$.
As $(\alpha_n)_{n \geq 0}$ can only take discrete values, this means that
$\alpha_{\phi_n} = \bar{\alpha}$ for all $n$ sufficiently big.

When $n$ tend to infinity, by Theorem~\ref{thm:fastDer}, we get 
\begin{align*}
 J\left (\mathrm{P}_\C(x- \bar{\alpha} \nabla f (x))\right ) -J(x)\leq
 -\sigma \frac{\norm{x - \mathrm{P}_\C(x- \bar{\alpha} \nabla f (x))}_2^2}{\bar{\alpha}}
\end{align*}
and if $\bar{\alpha} \not = \alpha^0$,
\begin{align*}
J\left (\mathrm{P}_\C(x- \beta^{-1} \bar{\alpha} \nabla f (x))\right )- J(x) \geq
 -\sigma \frac{\norm{x - \mathrm{P}_\C(x- \beta^{-1}  \bar{\alpha} \nabla f(x))}_2^2}{\beta^{-1} \bar{\alpha}}
\end{align*}
Then, if $\alpha$ is the step length returned by Armijo rule (Algorithm~\ref{alg:Armijo}),
then $\alpha \geq \bar{\alpha}$, because $\alpha$ is the first number of the sequence that
verifies the first inequality. Similarly, consider the version
of Armijo rule with a strict inequality instead of the non strict inequality.
Then if $\alpha_{\mathrm{strict}}$ is the step length returned by this algorithm,
we have $\alpha_{\mathrm{strict}} \leq \bar{\alpha}$. 

Moreover, like in~\cite{Pironneau-ApproxGrad} one can easily see that
$\forall x^* \in \C$ not stationary,  $\exists \rho^*>0$, $\exists \delta^*>0$, such that
$\forall x \in \mathrm{B}(x^*, \rho^*) \cap \C$,
\[
 J(\left (\mathrm{P}_\C(x- \alpha_{\mathrm{strict}} \nabla f(x))\right )- J(x) \leq - \delta^*
\]
where $\delta^* = \sigma \frac{\norm{x^* - \mathrm{P}_\C(x^*- \alpha_{\mathrm{strict}} \nabla f (x^*))}_2^2}{\alpha_{\mathrm{strict}}} - \norm{\nabla J(x^*)} \rho^*$.
By Lemma~3 in \cite{Ber-gradientAlgImproved}, $\alpha_{\mathrm{strict}} \leq \bar{\alpha}$ implies that
$\frac{\norm{x^* - \mathrm{P}_\C(x^*- \alpha_{\mathrm{strict}} \nabla f (x^*))}_2^2}{\alpha_{\mathrm{strict}}} \geq
 \frac{\norm{x - \mathrm{P}_\C(x- \bar{\alpha} \nabla f(x))}_2^2}{\bar{\alpha}}$.
As this is true for all adherent point of $(\alpha_n)_{n\geq 0}$,
$\forall x^* \in \C$ not stationary,  $\exists \rho^*>0$, $\exists \delta^*>0$ and $\exists n^* \in \mathbb{N}$, such that
$\forall n \geq n^*$, $\forall x \in \mathrm{B}(x^*, \rho^*) \cap \C$,
\begin{align*}
 J_n(\left (\mathrm{P}_\C(x- \alpha_n g_n(x))\right )- J_n(x) &\leq 
 -\sigma \frac{\norm{x - \mathrm{P}_\C(x- \alpha_{\mathrm{strict}} \nabla f (x))}_2^2}{\alpha_{\mathrm{strict}}} + \delta^*/4 
\leq - \delta^*/2 \enspace ,
\end{align*}
for $n^*$ sufficiently large and $\rho^*$ sufficiently small. 
\end{proof}
In~\cite{Pironneau-ApproxGrad}, the property of the lemma was proved for exact minimization
in the line search. We proved it for Algorithm~\ref{alg:ArmijoApprox}.

We shall also need the following result
\begin{prop}[Theorem 25 in \cite{mayer-eigenVerification}] \label{prop:boundVecNonsym}
 Let $M \in M_{n,n}(\mathbb{R})$, $\tilde{\lambda} \in \mathbb{R}$,
$\tilde{x} \in \mathbb{R}^n$, $C \in M_{n+1,n+1}(\mathbb{R})$ and $p \in \mathbb{R}^n$ such that $p^T \tilde{x}=1$.
Denote 
\[
B= \begin{bmatrix}A-\tilde{\lambda}I_n & -\tilde{x} \\ p^T  & 0 \end{bmatrix} \enspace , \quad 
\eta= \left \lVert C \begin{bmatrix}A \tilde{x}-\tilde{\lambda}\tilde{x} \\ 0\end{bmatrix} \right \lVert_{\infty}\enspace ,
\] 
$\sigma=\norm{I_{n+1}-CB}$ and $\tau=\norm{C}_{\infty}$.
If $\sigma <1$ and $\Delta=(1-\sigma)^2-4\eta \tau \geq 0$, then
$\beta=\frac{2 \eta}{1-\sigma + \sqrt{\Delta}}$ is nonnegative and
there exists a unique eigenpair $(x^*,\lambda^*)$ of $M$ such
that $p^T x^*=1$, $\abs{\lambda^*-\tilde{\lambda}}\leq \beta$ and 
 $\norm{x^*-\tilde{x}}_{\infty} \leq \beta$.
\end{prop}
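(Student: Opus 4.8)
The plan is to rewrite the eigenpair equation, together with the normalization $p^{T}x=1$, as a single \emph{quadratic} system on $\mathbb{R}^{n+1}$, and then to run a Banach fixed point argument on the Newton-type operator obtained by preconditioning with $C$. Write $z=(x,\lambda)\in\mathbb{R}^{n+1}$, equip $\mathbb{R}^{n+1}$ with $\norm{(x,\lambda)}_\infty=\max(\norm{x}_\infty,\abs{\lambda})$ and matrices with the subordinate operator norm, and set
\[
 F(z)=\begin{bmatrix}(A-\lambda I_n)x\\ p^{T}x-1\end{bmatrix}\enspace,
\]
so that the zeros of $F$ are exactly the eigenpairs of $M$ normalized by $p^{T}x=1$ (here I write $A$ for $M$, as in the displayed formulas of the statement). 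Since $p^{T}\tilde{x}=1$ we have $F(\tilde{z})=\begin{bmatrix}A\tilde{x}-\tilde{\lambda}\tilde{x}\\0\end{bmatrix}$, the Jacobian $F'(\tilde{z})$ is precisely the matrix $B$ of the statement, and the only nonlinearity of $F$ is bilinear: with $h=z-\tilde{z}=(h_x,h_\lambda)$ one has the \emph{exact} identity $F(z)=F(\tilde{z})+Bh+r(h)$, where $r(h)=\begin{bmatrix}-h_\lambda h_x\\0\end{bmatrix}$. In particular $\eta=\norm{CF(\tilde{z})}_\infty$ and $\norm{r(h)}_\infty=\abs{h_\lambda}\,\norm{h_x}_\infty\le\norm{h}_\infty^{2}$.

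Next I would analyse $G(z)=z-CF(z)$, whose fixed points are the zeros of $CF$. The hypothesis $\sigma=\norm{I_{n+1}-CB}<1$ forces $CB$, hence $C$, to be invertible (and also forces $\tau=\norm{C}_\infty>0$, since $\tau=0$ would give $\sigma=\norm{I_{n+1}}\ge1$), so fixed points of $G$ are exactly zeros of $F$. Using the quadratic expansion,
\[
 G(z)-\tilde{z}=(I_{n+1}-CB)h-CF(\tilde{z})-C\,r(h)\enspace,
\]
so on the closed ball $\overline{\mathrm{B}}(\tilde{z},\beta)$ we get $\norm{G(z)-\tilde{z}}_\infty\le\sigma\beta+\eta+\tau\beta^{2}$. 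The value $\beta$ is chosen exactly as the smaller root of $\tau t^{2}-(1-\sigma)t+\eta=0$; rationalizing the quadratic formula shows this root equals $2\eta/\big((1-\sigma)+\sqrt{\Delta}\big)$, which is real and nonnegative because $\Delta\ge0$ and $\tau>0$. Hence $\sigma\beta+\eta+\tau\beta^{2}=\beta$ and $G$ maps $\overline{\mathrm{B}}(\tilde{z},\beta)$ into itself. For the Lipschitz estimate, bilinearity of $r$ yields $\norm{r(h_1)-r(h_2)}_\infty\le2\beta\norm{h_1-h_2}_\infty$ on the ball, so $\norm{G(z_1)-G(z_2)}_\infty\le(\sigma+2\tau\beta)\norm{z_1-z_2}_\infty$; and $\sigma+2\tau\beta<1$ because $\tau\beta=(1-\sigma)-\eta/\beta$ (from the quadratic) gives $\sigma+2\tau\beta=2-\sigma-2\eta/\beta$, which is $<1$ precisely when $\beta<2\eta/(1-\sigma)$, i.e.\ when $\sqrt{\Delta}>0$.

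Banach's fixed point theorem then produces a unique fixed point $z^{*}=(x^{*},\lambda^{*})\in\overline{\mathrm{B}}(\tilde{z},\beta)$; invertibility of $C$ gives $F(z^{*})=0$, so $(A-\lambda^{*}I)x^{*}=0$ and $p^{T}x^{*}=1$, the latter forcing $x^{*}\ne0$, so $(x^{*},\lambda^{*})$ is a genuine eigenpair, and $\norm{z^{*}-\tilde{z}}_\infty\le\beta$ yields both $\norm{x^{*}-\tilde{x}}_\infty\le\beta$ and $\abs{\lambda^{*}-\tilde{\lambda}}\le\beta$. Uniqueness among eigenpairs satisfying all three bounds is automatic, since any such pair has $p^{T}x^{*}=1$, hence is a fixed point of $G$ inside the ball. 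The one point that needs care is the boundary case $\Delta=0$ (where the subcase $\beta=\eta=0$ is trivial, and otherwise $\eta>0$ but the contraction factor $\sigma+2\tau\beta$ equals $1$): here I would either appeal to a sharper fixed point statement — the map is into the \emph{closed} ball and strictly contractive on its interior, which still pins down a unique fixed point — or recover $\Delta=0$ as a limit of the strict cases $\Delta>0$. Keeping the bookkeeping so that the into-the-ball radius and the contraction threshold are both governed by the \emph{same} quadratic $\tau t^{2}-(1-\sigma)t+\eta$ is really the crux of the argument.
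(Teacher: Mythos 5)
The paper does not prove this proposition at all: it is imported verbatim as Theorem~25 of \cite{mayer-eigenVerification}, so there is no in-paper argument to compare yours against. Your reconstruction is the standard one behind such a~posteriori verification theorems, and it is essentially correct: writing the normalized eigenproblem as $F(z)=0$ with $z=(x,\lambda)$, noting that $F$ is exactly quadratic with Jacobian $B$ at $\tilde z$ and bilinear remainder $r(h)=(-h_\lambda h_x,0)$, and running Banach on the preconditioned map $G=\mathrm{id}-CF$ over the closed $\beta$-ball. The bookkeeping checks out: $\beta$ is indeed the smaller root of $\tau t^2-(1-\sigma)t+\eta$, so $\sigma\beta+\eta+\tau\beta^2=\beta$ gives the self-map property, and $\sigma+2\tau\beta=1-\sqrt{\Delta}$ gives strict contraction when $\Delta>0$; the invertibility of $C$ (forced by $\sigma<1$) identifies fixed points of $G$ with zeros of $F$, and $p^Tx^*=1$ rules out $x^*=0$. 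The one soft spot is the boundary case $\Delta=0$ with $\eta>0$, which you rightly flag: existence still follows (Brouwer, since $G$ is a continuous self-map of a compact convex set), but uniqueness in the \emph{closed} ball is not delivered by ``strictly contractive on the interior'' alone (two boundary fixed points are not excluded by that observation), and the ``limit of strict cases'' repair does not apply as stated since $\eta$, $\sigma$, $\tau$ are fixed data rather than free parameters. That degenerate case is precisely where the cited theorem's own proof must work harder; for the use the paper makes of the proposition inside the proof of Theorem~\ref{thm:approxGrad}, $\eta\to0$ while $\sigma$ stays bounded away from $1$, so $\Delta>0$ eventually and your argument covers everything that is actually needed.
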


\begin{thm} \label{thm:approxGrad}
 Let $(x_i)_{i\geq 0}$ be a sequence constructed by the Master Algorithm Model (Algorithm~\ref{alg:Master})
for the resolution of the Perron vector optimization problem~\eqref{eq:perronVecOpt} 
such that $A_{n}(x)$ is the Approximate Armijo line search along the projected arc
(Algorithm~\ref{alg:ArmijoApprox})
and $\Delta(n)=(\Delta_0)^n$ for $\Delta_0 \in (0,1)$.
Then every accumulation point of $(x_i)_{i\geq 0}$ is
a stationary point of~\eqref{eq:perronVecOpt}.
\end{thm}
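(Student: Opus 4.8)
The plan is to invoke the general convergence theorem for the Master Algorithm Model (Theorem~3.3.17 or the surrounding development in~\cite{Polak-consistent}), and to verify that the two ingredients it requires are satisfied in our setting: first, that the approximate objective functions $J_n$ and the approximate gradient maps $g_n$ converge, uniformly on compact sets, to $J=f\circ u\circ h$ and to $\nabla J$ respectively; and second, that the approximate line search $A_n$ enjoys the "sufficient decrease near non-stationary points'' property, which is exactly the content of Lemma~\ref{lem:ArmijoApproxGood}. Granting these, the abstract theory guarantees that the method does not halt prematurely (the set $\mathcal{N}$ of admissible precision levels is cofinal, so $n_i\to\infty$), that $\Delta(n_i)\to 0$, and that every accumulation point of $(x_i)$ is stationary for~\eqref{eq:perronVecOpt}.

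The first step is the consistency of the approximations. Here I would argue as follows: for $x\in\C$, the matrix $h(x)$ is nonnegative irreducible (aperiodic), so by Theorem~\ref{thm:fastDer} the iterates $(u_{k},v_{k},\tilde{w}_{k})=(\mathrm{POWERDERIVATIVE}_{h(x)})^k(u_0,v_0,\tilde{w}_0)$ converge linearly, with rate $\abs{\lambda_2}/\rho$, to the quantities $(u,v,w)$ of Corollary~\ref{prop:gcool}. Consequently the stopping index $k_n$ defined by~\eqref{eq:kn} is finite and, since $\Delta(n)=(\Delta_0)^n\to 0$, the approximate quantities $(u_{k_n},v_{k_n},\tilde{w}_{k_n})$ converge to $(u,v,w)$; feeding this through the continuous functions $f$ and $\nabla h$ gives $J_n(x)\to J(x)$ and $g_n(x)=\sum_{i,j\in[n]}\tilde{w}_{k_n}(i)\nabla h_{i,j}(x)u_{k_n}(j)\to \nabla J(x)$, with the convergence locally uniform because the spectral gap and the eigendata depend continuously on $x$ over the compact pieces of $\C$ traversed by the algorithm. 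Proposition~\ref{prop:boundVecNonsym} is the tool I would use to make the uniformity quantitative: it provides, from the computable residual of the approximate eigenpair, an explicit a~posteriori bound $\beta$ on the distance to the true eigenpair, which lets one control $\abs{J_n(x)-J(x)}$ and $\norm{g_n(x)-\nabla J(x)}$ uniformly on a neighbourhood once $n$ is large.

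The second step is to feed Lemma~\ref{lem:ArmijoApproxGood} into the abstract machinery. That lemma already says: for every non-stationary $x^*$ there are $\rho^*,\delta^*>0$ and $n^*$ so that for $n\geq n^*$ and $x\in\mathrm{B}(x^*,\rho^*)\cap\C$ the line search succeeds and $J_n(A_n(x))-J_n(x)\leq-\delta^*$. This is precisely the hypothesis needed so that a non-stationary accumulation point is impossible: if $x^*$ were a non-stationary accumulation point of $(x_i)$, then infinitely many $x_i$ lie in $\mathrm{B}(x^*,\rho^*)\cap\C$ with $n_i\geq n^*$, each contributing a decrease of at least $\delta^*$ in the (asymptotically convergent) sequence of objective values $J_{n_i}(x_i)$; combined with the bound $\abs{J_{n_i}(x)-J(x)}\to 0$ from Step~1 this forces $J$ to decrease without bound along a bounded trajectory, a contradiction. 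One also checks, exactly as in~\cite{Polak-consistent}, that the extra test $J_{n_i}(x_{i+1})-J_{n_i}(x_i)\leq-\sigma'\Delta(n_i)^\omega$ in Algorithm~\ref{alg:Master} is compatible with the $-\delta^*$ decrease (since $\Delta(n_i)^\omega\to 0$), so the algorithm is well defined and $n_i\to\infty$.

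The main obstacle I anticipate is the \emph{local uniformity} in Step~1: Theorem~\ref{thm:fastDer} gives convergence of $\mathrm{POWERDERIVATIVE}$ for a fixed matrix, whereas the Master Algorithm evaluates $J_n,g_n$ at a moving point $x$, and one must ensure the rate does not degenerate as $x$ varies — i.e.\ that the spectral gap $\rho-\abs{\lambda_2}$ of $h(x)$ stays bounded away from $0$ on the relevant compact set, and that the auxiliary iterates $\tilde{w}_{k}$ stay uniformly bounded (the boundedness argument inside the proof of Theorem~\ref{thm:fastDer}, via Lemma~5.6.10 of~\cite{HornJohnson-MatrixAnalysis}, must be made uniform in $x$). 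This is where Proposition~\ref{prop:boundVecNonsym} does the real work; everything else is a matter of transcribing Theorem~3.3.17 of~\cite{Polak-consistent} to the present data.
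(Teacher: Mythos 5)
Your proposal is correct and follows essentially the same route as the paper: it invokes the consistent-approximation convergence theorem of Polak (the paper cites Theorem~3.3.19 there), verifies the quantitative error hypothesis $\abs{J(x)-J_n(x)}\leq K\Delta(n)$ by chaining the a~posteriori eigenpair bound of Proposition~\ref{prop:boundVecNonsym} with the stopping rule~\eqref{eq:kn}, and supplies the sufficient-decrease hypothesis via Lemma~\ref{lem:ArmijoApproxGood}. The only cosmetic difference is that you re-derive the final contradiction at a non-stationary accumulation point by hand, where the paper simply cites the abstract theorem.
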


\begin{proof}
The proof of the theorem is based on Theorem~3.3.19 in~\cite{Polak-consistent}. 
This theorem shows that if continuity assumptions hold (they trivially hold in our case), if for all bounded subset $S$ of $\C$ there exist $K>0$
such that for all $x\in \C$
\[
\abs{J(x)-J_n(x)} \leq K \Delta(n) \enspace,
\]
and if for all $x^* \in \C$ which is not stationary, there exists $\rho^*>0$, $\delta^*>0$ and $n^* \in \mathbb{N}$ such that for all $n \geq n^*$,
for all $x \in \mathrm{B}(x^*, \rho^*) \cap \C$ and for all $y\in A_n(x)$,
\[
 J_n(y)- J_n(x) \leq - \delta^* \enspace ,
\]
then every accumulation point of a sequence $(x_i)_{i\geq 0}$ generated by the
Master Algorithm Model (algorithm~\ref{alg:Master}) is
a stationary point of the problem of minimizing $J(x)$.

We first remark that for $x\in \C$, $u=u(h(x))$ and $J_n$~defined in~\eqref{eq:approx}, 
as $f$~is continuously differentiable,
 we have $\abs{J(x)-J_n(x)} \leq \norm{\nabla f(u)} \; \norm{u-u_{k_n}}$.

We shall now show that for all matrix $M$,
there exists $K>0$ such that $\norm{u-u_n} \leq K \norm{u_{n+1}-u_{n}}$.
Remark that Theorem 16 in \cite{mayer-eigenVerification} gives the result with $K>\frac{N(u_n)}{\rho-\abs{\lambda_2}}$ when $M$ is symmetric.
When $M$ is not necessarily symmetric, we use Proposition~\ref{prop:boundVecNonsym}
with 
 $\tilde{x}=u$, $\tilde{\lambda}=\rho$ and $C$~is the inverse of~$B$. 
Let $\epsilon>0$, by continuity of the inverse, for $n$ sufficiently large,
if we define $B_n$ to be the matrix of Proposition~\ref{prop:boundVecNonsym} with
$\tilde{x}=u_n$ and $\tilde{\lambda}=N(M u_n)$,
we still have $\sigma:=\norm{I_{n+1}-CB_n} < \epsilon$.
We also have: 
\begin{align*}
 \eta:=\left \lVert C \begin{bmatrix} M \tilde{x}-\tilde{\lambda}\tilde{x} \\ 0\end{bmatrix} \right \lVert_{\infty} 
&\leq \norm{C}_{\infty} \norm{M u_n- N(M u_n)u_n}_{\infty} \\
&\leq \norm{C}_{\infty} N(M u_n) \norm{u_{n+1} - u_n} \enspace .
\end{align*}
The conclusion of Proposition~\ref{prop:boundVecNonsym} tells us that if
$\Delta := (1-\sigma)^2-4 \eta \tau \geq 0$, then
\[
 \norm{u-u_n} \leq \beta := \frac{2 \eta}{1-\sigma + \sqrt{\Delta}} \leq \frac{2 \eta}{1-\sigma} 
\leq 3 \eta \leq 3 \norm{C}_{\infty} N(M u_n) \norm{u_{n+1} - u_n} \enspace .
\]
Now, as the inversion is a continuous operation, 
for all compact subset $S$ of $\C$,
there exists $K>0$ such that for all $M=h(x) \in h(\C)$ and for all $n$ sufficiently big, $\norm{u-u_n} \leq K \norm{u_{n+1}-u_{n}}$.

By definition of $k_n$~\eqref{eq:kn}, we get
\[
\abs{J(x)-J_n(x)} \leq \norm{\nabla f(u)} \norm{u-u_{k_n}} \leq K \norm{u_{k_n+1}-u_{k_n}} \leq K \Delta(n) \enspace .
\]

By Lemma~\ref{lem:ArmijoApproxGood}, the other hypothesis of Theorem~3.3.19 in~\cite{Polak-consistent}
is verified and the conclusion holds: every accumulation point of $(x_i)_{i\geq 0}$ is
a stationary point of~\eqref{eq:perronVecOpt}.
\end{proof}

\section{Application to HITS optimization} \label{sec:HITS}

In the last two sections, we have developped scalable algorithms for the computation
of the derivative of a scalar function of the Perron vector of a matrix and for
the searching of stationary points of Perron vector optimization problems~\eqref{eq:perronVecOpt}.
We now apply these results to two web ranking optimization problems, namely
HITS authority optimization and HOTS optimization.

HITS algorithm for ranking web pages has been described by Kleinberg in~\cite{Kleinberg-HITS}.
The algorithm has two phases: first, given a query, it produces a subgraph $G$ of the whole web graph
such that in contains relevant pages, pages linked to relevant pages and the hyperlinks between them.
The second phase consists in computing a principal eigenvector called authority vector and to sort the pages
with respect to their corresponding entry in the eigenvector. If we denote by $A$ the adjacency
matrix of the directed graph $G$, then the authority vector is the principal eigenvector of $A^T A$.

It may however happen that $A^T A$ is reducible and then the authority vector is
not uniquely defined. Following~\cite{LanMey-Beyond}, we remedy this by defining the HITS authority score
to be the principal eigenvector of $A^T A + \xi e e^T$, for a given small positive real $\xi$.
We then normalize the HITS vector with the 2-norm as proposed be Kleinberg~\cite{Kleinberg-HITS}.

Given a subgraph associated to a query, we study in this section the optimization of
the authority of a set of pages.
We partition the set of potential links $(i,j)$ into three subsets, consisting respectively of the set of obligatory links~$\mathcal{O}$, the set of
prohibited links~$\mathcal{I}$ and the set of facultative links~$\mathcal{F}$.
Some authors consider that links between pages of a website, called intra-links,
should not be considered in the computation of HITS. This results in considering
these links as prohibited because this is as if they did not exist.

Then, we must select the subset $J$ of the set of
facultative links~$\mathcal{F}$ which are effectively included
in this page. Once this choice is made for every page, we get
a new webgraph, and define the adjacency matrix $A=A(J)$.
We make the simplificating assumption that the construction of the focused graph $G$ is
independent of the set of facultative links chosen.

Given a utility function $f$, the HITS authority optimization problem is:
\begin{equation} \label{eqn:HITSoptim}
  \max_{J \subseteq \mathcal{F},u \in \mathbb{R}^n,\lambda \in \mathbb{R}} \{ f(u) \; \; ; \; (A(J)^T A(J) +\xi e e^T) u =\lambda u \;, \; \norm{u}_2=1 \;, \; u \geq 0 
 \} 
\end{equation} 

The set of admissible adjacency matrices is a combinatorial set with a number of matrices exponential in
the number of facultative links. Thus we shall consider instead a relaxed version of the HITS authority optimization problem
which consists in accepting weighted adjacency matrices. It can be written as
\begin{equation} \label{eqn:HITSoptimRelaxed}
\begin{split}
  \max_{A\in \mathbb{R}^{n \times n},u \in \mathbb{R}^n,\rho \in \mathbb{R}} & f(u) \\
(A^T A +\xi e e^T) u =\rho u \;&, \;\ \norm{u}_2=1 \;, \; u \geq 0  \\
 A_{i,j}=1 \; &,\; \forall (i,j) \in \mathcal{O} \\
 A_{i,j}=0\; &,\; \forall (i,j) \in \mathcal{I} \\
0\leq A_{i,j} \leq 1\;&,\;  \forall (i,j) \in \mathcal{F}
\end{split}
\end{equation} 

The relaxed HITS authority optimization problem~\eqref{eqn:HITSoptimRelaxed}
is a Perron vector optimization problem~\eqref{eq:perronVecOpt}
with $h(A)=A^T A + \xi e e^T$ and the normalization $N(u)=\sqrt{\sum_i u_i^2}=1$.
Hence $\nabla N(u(M))= u(M)$.
Remark that $\norm{u}=\norm{v}=1$.
Now $\frac{\partial h}{\partial A}(A).H = H^T A + A^T H$
so the derivative of the criterion with respect to the weighted adjacency matrix is
$(Aw)u^T+(Au) w^T$ with $w=(\nabla f^T - (\nabla f \cdot u) \nabla N^T) (A^T A +\xi e e^T-\rho I)^{\#}$.

Thanks to $\xi>0$, the matrix is irredutible and aperiodic. Thus, it has only one eigenvalue
of maximal modulus and we can apply Theorem~\ref{thm:approxGrad}.

The next proposition shows that, as is the case for PageRank optimization~\cite{NinKer-PRopt, Fercoq-PRopt},
 optimal strategies have a rather simple structure.
\begin{prop}[Threshold property] \label{prop:shapeHits}
 Let $A$ be a locally maximal linking strategy of
 the relaxed HITS authority optimization problem~\eqref{eqn:HITSoptimRelaxed}
with associated authority vector $u$ and derivative at optimum $(Aw)u^T+(Au) w^T$.
For all controlled page~$i$ denote $b_i=\frac{-(Aw)_i}{(Au)_i}$ if it has at least one
outlink.
Then all facultative hyperlinks $(i,j)$ such that $\frac{w_j}{u_j}>b_i$  get a weight of~1 and
 all facultative hyperlinks $(i,j)$ such that $\frac{w_j}{u_j}<b_i$ get a weight of~0.

In particular, if two pages with different $b_i$'s have the same sets of facultative outlinks, then
their set of activated outlinks are included one in the other.
\end{prop}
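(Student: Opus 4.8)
The plan is to treat the relaxed HITS authority problem~\eqref{eqn:HITSoptimRelaxed} as the instance of the Perron vector optimization problem~\eqref{eq:perronVecOpt} identified just above — with $h(A) = A^T A + \xi e e^T$ and $N(u) = \norm{u}_2$ — over the box-shaped convex set $\C$ in which the obligatory entries of $A$ are fixed to $1$, the prohibited entries to $0$, and each facultative entry ranges over $[0,1]$. The threshold structure will then come out of the first-order optimality conditions for this box constraint, combined with the fact (established right before the statement) that the gradient of $f\circ u\circ h$ has the rank-$2$ form $(Aw)u^T + (Au)w^T$, so that its sign along a coordinate $(i,j)$ reduces to a single scalar comparison.

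First I would write down the first-order conditions at a local maximizer $A$. Since $f\circ u\circ h$ is continuously differentiable (Corollary~\ref{prop:gcool}, with $\nabla N(u)=u$ and $w=(\nabla f^T - (\nabla f\cdot u)u^T)(A^T A + \xi e e^T-\rho I)^{\#}$), and since the feasible set is a box, maximality forces, for each facultative pair $(i,j)$: if the partial derivative $g_{ij} := (Aw)_i u_j + (Au)_i w_j$ is strictly positive, then $A_{i,j}$ can be neither in the open interval $(0,1)$ (where the partial derivative would have to vanish) nor at the lower endpoint $0$, hence $A_{i,j}=1$; symmetrically $g_{ij}<0$ forces $A_{i,j}=0$.

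It remains to determine the sign of $g_{ij}$. Because $\xi>0$, the matrix $h(A)=A^T A+\xi e e^T$ is irreducible and aperiodic, so its Perron vector satisfies $u>0$; in particular $u_j>0$ for every $j$, and for every controlled page $i$ that has at least one outlink the row $A_{i,\cdot}\ge 0$ is nonzero, whence $(Au)_i=\sum_j A_{i,j}u_j>0$. Factoring out $(Au)_i u_j>0$ gives $g_{ij} = (Au)_i u_j\big(\tfrac{w_j}{u_j}-b_i\big)$ with $b_i=\tfrac{-(Aw)_i}{(Au)_i}$, so $g_{ij}$ has the same sign as $\tfrac{w_j}{u_j}-b_i$. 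Together with the previous paragraph this yields exactly the two claimed implications: $\tfrac{w_j}{u_j}>b_i$ forces weight $1$ and $\tfrac{w_j}{u_j}<b_i$ forces weight $0$.

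For the last assertion, suppose pages $i$ and $i'$ have the same set of facultative outlinks and, say, $b_i<b_{i'}$. If $(i',j)$ is facultative with $A_{i',j}>0$, then $g_{i'j}$ cannot be negative, so $\tfrac{w_j}{u_j}\ge b_{i'}>b_i$, hence $g_{ij}>0$ and $A_{i,j}=1$; thus the activated outlinks of $i'$ are contained in those of $i$. I do not expect a genuine obstacle: everything hinges on recognizing that the box first-order conditions apply (legitimate because the differentiability of $f\circ u\circ h$ and the explicit gradient were obtained in Section~\ref{sec:fastDer}) and that the gradient is rank $2$, so along each coordinate its sign collapses to the comparison of $w_j/u_j$ with the page-dependent number $b_i$. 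The one point that actually uses a hypothesis is the strict positivity of $(Au)_i$, which is why the statement and the definition of $b_i$ are restricted to controlled pages possessing at least one outlink.
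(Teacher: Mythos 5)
Your proof is correct and takes essentially the same route as the paper's: first-order optimality over the box constraints, the rank-two form $g_{ij}=(Aw)_iu_j+(Au)_iw_j$ of the derivative, positivity of $u$ (from irreducibility via $\xi>0$) and of $(Au)_i$ for pages with an outlink so that one can divide and compare $w_j/u_j$ with $b_i$, and the same implication chain for the nesting of activated outlinks. You merely spell out steps the paper leaves implicit (the justification that $(Au)_i>0$ and the sign bookkeeping at a maximum), and your sign conventions are internally consistent with the proposition's conclusion.
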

\begin{proof}
 As the problem only has box constraints, 
a nonzero value of the derivative at the maximum
determines whether the upper bound is saturated ($g_{i,j}<0$) or the lower bound is
saturated ($g_{i,j}>0$). If the derivative is zero, the weight of the link can take
any value.

We have $g_{i,j}=(Aw)_i u_j+(Au)_i w_j$ with $u_j>0$ and $(Au)_i \geq 0$.
If Page~$i$ has at least one outlink, then $(Au)_i>0$ and we simply divide 
by $(Au)_i$ to get the result thanks to the first part of the proof.
If two pages~$i_1$ and~$i_2$ have the same sets of facultative outlinks
and if $b_{i_1}<b_{i_2}$, then $\frac{w_j}{u_j} \geq b_{i_2}$ implies $\frac{w_j}{u_j}>b_{i_1}$ all the pages pointed by $i_2$ are also pointed
by $i_1$.
\end{proof}
\begin{remark}
 If a page~$i$ has no outlink, then $(Aw)_i=(Au)_i=0$ and $g_{i,j}=0$ for all 
$j\in [n]$, so we cannot conclude with the argument of the proof.
\end{remark}
\begin{remark}
This proposition shows that $\frac{w_j}{u_j}$ gives a total order of preference
in pointing to a page or another.
\end{remark}

Then we give on Figures~\ref{fig:GraphOptHits1} and~\ref{fig:GraphOptHits2} 
a simple HITS authority optimization problem and two local solutions.
They show the following properties for this problem.

\begin{figure} 
\centering
\small{
\begin{tikzpicture}[>=latex', scale=0.26]
\pgfsetlinewidth{0.5bp}
\pgfsetcolor{black}
  \pgfsetcolor{red}
  \draw [->,very thick] (441bp,156bp) .. controls (472bp,159bp) and (518bp,163bp)  .. (562bp,168bp);
  \pgfsetcolor{blue}
  \draw [->,very thick] (429bp,178bp) .. controls (435bp,187bp) and (442bp,197bp)  .. (448bp,207bp) .. controls (467bp,238bp) and (484bp,276bp)  .. (499bp,314bp);
  \pgfsetcolor{black}
  \draw [->] (254bp,330bp) .. controls (289bp,310bp) and (348bp,275bp)  .. (396bp,247bp);
  \draw [->] (499bp,314bp) .. controls (488bp,285bp) and (469bp,242bp)  .. (448bp,207bp) .. controls (444bp,200bp) and (439bp,193bp)  .. (429bp,178bp);
  \draw [->] (580bp,141bp) .. controls (570bp,121bp) and (558bp,94bp)  .. (543bp,63bp);
  \pgfsetcolor{red}
  \draw [->,very thick] (397bp,184bp) .. controls (383bp,223bp) and (359bp,287bp)  .. (340bp,336bp);
  \pgfsetcolor{black}
  \draw [->] (505bp,376bp) .. controls (498bp,410bp) and (488bp,464bp)  .. (479bp,509bp);
  \pgfsetcolor{blue}
  \draw [->,very thick] (415bp,185bp) .. controls (415bp,186bp) and (416bp,188bp)  .. (418bp,199bp);
  \pgfsetcolor{black}
  \draw [->] (213bp,504bp) .. controls (188bp,479bp) and (147bp,438bp)  .. (112bp,403bp);
  \draw [->] (140bp,168bp) .. controls (168bp,145bp) and (217bp,104bp)  .. (257bp,70bp);
  \draw [->] (588bp,408bp) .. controls (565bp,434bp) and (524bp,479bp)  .. (492bp,516bp);
  \draw [->] (500bp,534bp) .. controls (536bp,533bp) and (599bp,532bp)  .. (649bp,531bp);
  \draw [->] (318bp,384bp) .. controls (302bp,411bp) and (276bp,457bp)  .. (252bp,499bp);
  \pgfsetcolor{red}
  \draw [->,very thick] (405bp,257bp) .. controls (390bp,278bp) and (368bp,309bp)  .. (346bp,340bp);
  \pgfsetcolor{black}
  \draw [->] (195bp,354bp) .. controls (175bp,359bp) and (151bp,365bp)  .. (120bp,372bp);
  \pgfsetcolor{red}
  \draw [->,very thick] (432bp,130bp) .. controls (451bp,111bp) and (478bp,84bp)  .. (506bp,56bp);
  \pgfsetcolor{black}
  \draw [->] (65bp,538bp) .. controls (100bp,535bp) and (155bp,532bp)  .. (204bp,529bp);
  \draw [->] (439bp,202bp) .. controls (457bp,167bp) and (489bp,109bp)  .. (514bp,62bp);
  \draw [->] (140bp,202bp) .. controls (178bp,234bp) and (260bp,302bp)  .. (311bp,344bp);
  \draw [->] (348bp,381bp) .. controls (373bp,412bp) and (422bp,471bp)  .. (457bp,515bp);
  \draw [->] (311bp,344bp) .. controls (273bp,312bp) and (191bp,244bp)  .. (140bp,202bp);
  \draw [->] (228bp,379bp) .. controls (230bp,409bp) and (232bp,452bp)  .. (234bp,494bp);
  \pgfsetcolor{red}
  \draw [->,dotted,very thick] (393bp,222bp) .. controls (358bp,213bp) and (302bp,196bp)  .. (255bp,183bp);
  \pgfsetcolor{black}
  \draw (321bp,216bp) node {0.18};
  \draw [->] (412bp,525bp) .. controls (421bp,526bp) and (430bp,528bp)  .. (448bp,531bp);
  \draw [->] (234bp,494bp) .. controls (232bp,464bp) and (230bp,421bp)  .. (228bp,379bp);
  \draw [->] (664bp,507bp) .. controls (652bp,484bp) and (634bp,447bp)  .. (617bp,412bp);
  \pgfsetcolor{blue}
  \draw [->,very thick] (444bp,257bp) .. controls (456bp,272bp) and (472bp,293bp)  .. (491bp,318bp);
  \pgfsetcolor{black}
  \draw [->] (454bp,220bp) .. controls (482bp,210bp) and (523bp,196bp)  .. (563bp,182bp);
  \draw [->] (492bp,370bp) .. controls (469bp,400bp) and (431bp,452bp)  .. (399bp,494bp);
  \draw [->] (348bp,522bp) .. controls (327bp,523bp) and (301bp,524bp)  .. (268bp,526bp);
  \draw [->] (78bp,411bp) .. controls (69bp,436bp) and (57bp,472bp)  .. (44bp,509bp);
  \draw [->] (204bp,178bp) .. controls (189bp,180bp) and (172bp,181bp)  .. (146bp,183bp);
  \draw [->] (204bp,529bp) .. controls (169bp,532bp) and (114bp,535bp)  .. (65bp,538bp);
  \draw [->] (107bp,353bp) .. controls (134bp,315bp) and (182bp,246bp)  .. (215bp,198bp);
  \draw [->] (449bp,543bp) .. controls (430bp,548bp) and (404bp,554bp)  .. (380bp,554bp) .. controls (345bp,555bp) and (305bp,547bp)  .. (267bp,536bp);
  \draw [->] (302bp,75bp) .. controls (327bp,107bp) and (370bp,162bp)  .. (404bp,205bp);
  \draw [->] (580bp,200bp) .. controls (566bp,229bp) and (544bp,274bp)  .. (525bp,315bp);
  \draw [->] (587bp,369bp) .. controls (573bp,356bp) and (553bp,336bp)  .. (535bp,320bp) .. controls (510bp,298bp) and (481bp,274bp)  .. (450bp,251bp);
  \draw [->] (268bp,525bp) .. controls (289bp,524bp) and (315bp,523bp)  .. (348bp,521bp);
  \pgfsetcolor{blue}
  \draw [->,very thick] (418bp,199bp) .. controls (418bp,198bp) and (417bp,196bp)  .. (415bp,185bp);
  \pgfsetcolor{black}
  \draw [->] (424bp,763bp) .. controls (395bp,781bp) and (344bp,814bp)  .. (302bp,840bp);
  \draw [->] (270bp,79bp) .. controls (262bp,98bp) and (252bp,123bp)  .. (240bp,152bp);
  \pgfsetcolor{blue}
  \draw [->,very thick] (491bp,318bp) .. controls (479bp,303bp) and (463bp,282bp)  .. (444bp,257bp);
  \pgfsetcolor{black}
  \draw [->] (649bp,532bp) .. controls (613bp,533bp) and (550bp,534bp)  .. (500bp,535bp);
  \draw [->] (488bp,557bp) .. controls (507bp,587bp) and (543bp,641bp)  .. (571bp,684bp);
  \draw [->] (571bp,684bp) .. controls (552bp,654bp) and (516bp,600bp)  .. (488bp,557bp);
  \draw [->] (540bp,358bp) .. controls (551bp,363bp) and (562bp,368bp)  .. (582bp,377bp);
  \draw [->] (112bp,403bp) .. controls (137bp,428bp) and (178bp,469bp)  .. (213bp,504bp);
  \draw [->] (228bp,559bp) .. controls (219bp,594bp) and (204bp,651bp)  .. (191bp,699bp);
  \pgfsetcolor{red}
  \draw [->,very thick] (377bp,157bp) .. controls (346bp,162bp) and (299bp,168bp)  .. (256bp,173bp);
  \pgfsetcolor{black}
  \draw [->] (560bp,714bp) .. controls (538bp,721bp) and (505bp,730bp)  .. (471bp,741bp);
  \draw [->] (449bp,723bp) .. controls (455bp,685bp) and (463bp,614bp)  .. (471bp,561bp);
\begin{scope}
  \pgfsetstrokecolor{black}
  \draw (236bp,527bp) ellipse (32bp and 33bp);
  \draw (236bp,527bp) node {11};
\end{scope}
\begin{scope}
  \pgfsetstrokecolor{black}
  \draw (282bp,49bp) ellipse (32bp and 33bp);
  \draw (282bp,49bp) node {10};
\end{scope}
\begin{scope}
  \pgfsetstrokecolor{black}
  \draw (226bp,346bp) ellipse (32bp and 33bp);
  \draw (226bp,346bp) node {13};
\end{scope}
\begin{scope}
  \pgfsetstrokecolor{black}
  \draw (380bp,520bp) ellipse (32bp and 33bp);
  \draw (380bp,520bp) node {12};
\end{scope}
\begin{scope}
  \pgfsetstrokecolor{black}
  \pgfsetfillcolor{lightgray}
  \filldraw (511bp,344bp) ellipse (32bp and 33bp);
  \draw (511bp,344bp) node {20};
\end{scope}
\begin{scope}
  \pgfsetstrokecolor{black}
  \pgfsetfillcolor{lightgray}
  \filldraw (409bp,153bp) ellipse (32bp and 33bp);
  \draw (409bp,153bp) node {21};
\end{scope}
\begin{scope}
  \pgfsetstrokecolor{black}
  \pgfsetfillcolor{lightgray}
  \filldraw (424bp,231bp) ellipse (32bp and 33bp);
  \draw (424bp,231bp) node {17};
\end{scope}
\begin{scope}
  \pgfsetstrokecolor{black}
  \draw (33bp,540bp) ellipse (32bp and 33bp);
  \draw (33bp,540bp) node {16};
\end{scope}
\begin{scope}
  \pgfsetstrokecolor{black}
  \draw (529bp,33bp) ellipse (32bp and 33bp);
  \draw (529bp,33bp) node {19};
\end{scope}
\begin{scope}
  \pgfsetstrokecolor{black}
  \draw (594bp,171bp) ellipse (32bp and 33bp);
  \draw (594bp,171bp) node {18};
\end{scope}
\begin{scope}
  \pgfsetstrokecolor{black}
  \draw (183bp,730bp) ellipse (32bp and 33bp);
  \draw (183bp,730bp) node {15};
\end{scope}
\begin{scope}
  \pgfsetstrokecolor{black}
  \draw (474bp,535bp) ellipse (26bp and 26bp);
  \draw (474bp,535bp) node {1};
\end{scope}
\begin{scope}
  \pgfsetstrokecolor{black}
  \draw (585bp,706bp) ellipse (26bp and 26bp);
  \draw (585bp,706bp) node {3};
\end{scope}
\begin{scope}
  \pgfsetstrokecolor{black}
  \draw (675bp,531bp) ellipse (26bp and 26bp);
  \draw (675bp,531bp) node {2};
\end{scope}
\begin{scope}
  \pgfsetstrokecolor{black}
  \draw (446bp,749bp) ellipse (26bp and 26bp);
  \draw (446bp,749bp) node {5};
\end{scope}
\begin{scope}
  \pgfsetstrokecolor{black}
  \draw (606bp,388bp) ellipse (26bp and 26bp);
  \draw (606bp,388bp) node {4};
\end{scope}
\begin{scope}
  \pgfsetstrokecolor{black}
  \draw (230bp,176bp) ellipse (26bp and 26bp);
  \draw (230bp,176bp) node {7};
\end{scope}
\begin{scope}
  \pgfsetstrokecolor{black}
  \draw (280bp,854bp) ellipse (26bp and 26bp);
  \draw (280bp,854bp) node {6};
\end{scope}
\begin{scope}
  \pgfsetstrokecolor{black}
  \draw (331bp,361bp) ellipse (26bp and 26bp);
  \draw (331bp,361bp) node {9};
\end{scope}
\begin{scope}
  \pgfsetstrokecolor{black}
  \draw (120bp,185bp) ellipse (26bp and 26bp);
  \draw (120bp,185bp) node {8};
\end{scope}
\begin{scope}
  \pgfsetstrokecolor{black}
  \draw (89bp,380bp) ellipse (32bp and 33bp);
  \draw (89bp,380bp) node {14};
\end{scope} 
\end{tikzpicture}
}
\caption{Strict local maximum for relaxed HITS authority optimization on 
a~small web graph of 21~pages with 3~controlled pages (colored) representing the website $I$. Obligatory links are
the thin arcs, facultative links are all the other outlinks from the controlled pages except self links.
The locally optimal solution for the maximization of $f(u)=\sum_{i\in I} u_i^2$ is to select the bold arcs with weight~$1$ and the dotted arc with weight~$0.18$.
Selected internal links are dark blue, selected external links are light red.
We checked numerically the second order optimality conditions~\cite{Bert-nonlinear}.
}\label{fig:GraphOptHits1}
\end{figure}
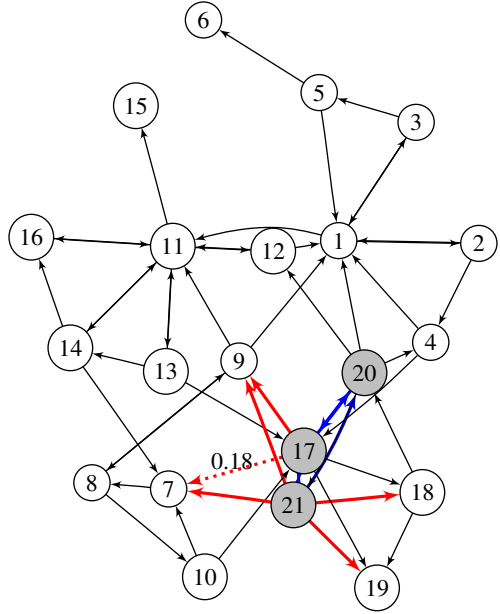

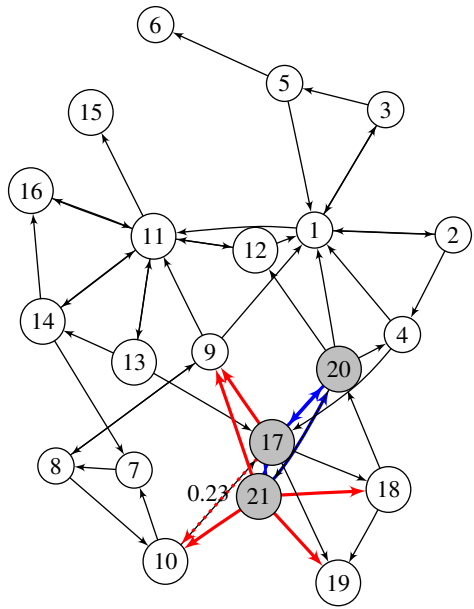
\begin{figure}
\centering
\small{
\begin{tikzpicture}[>=latex', join=bevel, scale=0.26]
\pgfsetlinewidth{0.5bp}
\pgfsetcolor{black}
  \pgfsetcolor{red}
  \draw [->,very thick] (392bp,160bp) .. controls (423bp,162bp) and (470bp,164bp)  .. (514bp,166bp);
  \pgfsetcolor{blue}
  \draw [->,very thick] (381bp,183bp) .. controls (389bp,192bp) and (396bp,202bp)  .. (403bp,212bp) .. controls (423bp,242bp) and (442bp,277bp)  .. (461bp,313bp);
  \pgfsetcolor{black}
  \draw [->] (209bp,336bp) .. controls (244bp,316bp) and (304bp,281bp)  .. (351bp,252bp);
  \pgfsetcolor{black}
  \draw [->] (248bp,88bp) .. controls (277bp,121bp) and (328bp,179bp)  .. (357bp,212bp);
  \pgfsetcolor{red}
  \draw [->,dotted,very thick] (357bp,212bp) .. controls (323bp,174bp) and (282bp,127bp)  .. (248bp,88bp);
  \pgfsetstrokecolor{black}
  \pgfsetcolor{black}
  \draw (288bp,163bp) node {0.23};
  \draw [->] (461bp,313bp) .. controls (447bp,286bp) and (425bp,245bp)  .. (403bp,212bp) .. controls (398bp,205bp) and (393bp,198bp)  .. (381bp,183bp);
  \draw [->] (531bp,139bp) .. controls (521bp,119bp) and (506bp,93bp)  .. (489bp,62bp);
  \pgfsetcolor{red}
  \draw [->,very thick] (350bp,189bp) .. controls (337bp,228bp) and (315bp,293bp)  .. (298bp,342bp);
  \pgfsetcolor{black}
  \draw [->] (469bp,374bp) .. controls (463bp,411bp) and (453bp,470bp)  .. (444bp,517bp);
  \pgfsetcolor{blue}
  \draw [->,very thick] (368bp,190bp) .. controls (368bp,191bp) and (369bp,193bp)  .. (371bp,204bp);
  \pgfsetcolor{black}
  \draw [->] (183bp,514bp) .. controls (157bp,493bp) and (114bp,460bp)  .. (76bp,431bp);
  \draw [->] (89bp,185bp) .. controls (116bp,162bp) and (163bp,120bp)  .. (202bp,86bp);
  \draw [->] (549bp,412bp) .. controls (527bp,439bp) and (488bp,485bp)  .. (457bp,523bp);
  \pgfsetcolor{red}
  \draw [->,very thick] (334bp,139bp) .. controls (313bp,125bp) and (283bp,104bp)  .. (252bp,82bp);
  \pgfsetcolor{black}
  \draw [->] (466bp,542bp) .. controls (501bp,541bp) and (563bp,538bp)  .. (613bp,537bp);
  \draw [->] (279bp,391bp) .. controls (265bp,418bp) and (243bp,463bp)  .. (223bp,505bp);
  \pgfsetcolor{red}
  \draw [->,very thick] (361bp,263bp) .. controls (346bp,284bp) and (326bp,314bp)  .. (305bp,345bp);
  \pgfsetcolor{black}
  \draw [->] (152bp,365bp) .. controls (133bp,374bp) and (109bp,385bp)  .. (79bp,398bp);
  \pgfsetcolor{red}
  \draw [->,very thick] (382bp,134bp) .. controls (400bp,114bp) and (426bp,86bp)  .. (452bp,57bp);
  \pgfsetcolor{black}
  \draw [->] (63bp,589bp) .. controls (93bp,578bp) and (138bp,562bp)  .. (179bp,545bp);
  \draw [->] (393bp,206bp) .. controls (410bp,171bp) and (437bp,111bp)  .. (460bp,63bp);
  \draw [->] (90bp,218bp) .. controls (130bp,247bp) and (216bp,312bp)  .. (269bp,351bp);
  \draw [->] (307bp,387bp) .. controls (334bp,419bp) and (386bp,480bp)  .. (423bp,523bp);
  \draw [->] (269bp,351bp) .. controls (229bp,322bp) and (143bp,257bp)  .. (90bp,218bp);
  \draw [->] (186bp,384bp) .. controls (191bp,414bp) and (197bp,459bp)  .. (204bp,502bp);
  \draw [->] (386bp,524bp) .. controls (393bp,527bp) and (399bp,529bp)  .. (415bp,535bp);
  \draw [->] (204bp,502bp) .. controls (199bp,472bp) and (193bp,427bp)  .. (186bp,384bp);
  \draw [->] (627bp,513bp) .. controls (615bp,489bp) and (596bp,451bp)  .. (578bp,416bp);
  \pgfsetcolor{blue}
  \draw [->,very thick] (401bp,260bp) .. controls (415bp,275bp) and (432bp,294bp)  .. (453bp,318bp);
  \pgfsetcolor{black}
  \draw [->] (409bp,224bp) .. controls (436bp,212bp) and (477bp,196bp)  .. (516bp,180bp);
  \draw [->] (456bp,369bp) .. controls (436bp,398bp) and (402bp,446bp)  .. (374bp,487bp);
  \draw [->] (323bp,518bp) .. controls (302bp,521bp) and (274bp,525bp)  .. (241bp,530bp);
  \draw [->] (47bp,443bp) .. controls (44bp,475bp) and (40bp,523bp)  .. (36bp,567bp);
  \draw [->] (155bp,197bp) .. controls (140bp,198bp) and (121bp,199bp)  .. (95bp,201bp);
  \draw [->] (179bp,545bp) .. controls (149bp,556bp) and (104bp,572bp)  .. (63bp,589bp);
  \draw [->] (67bp,383bp) .. controls (91bp,343bp) and (136bp,269bp)  .. (167bp,218bp);
  \draw [->] (414bp,546bp) .. controls (397bp,547bp) and (375bp,549bp)  .. (355bp,548bp) .. controls (320bp,547bp) and (280bp,543bp)  .. (241bp,539bp);
  \pgfsetcolor{black}
  \draw [->] (534bp,198bp) .. controls (522bp,227bp) and (504bp,271bp)  .. (487bp,312bp);
  \draw [->] (550bp,372bp) .. controls (537bp,356bp) and (518bp,334bp)  .. (499bp,318bp) .. controls (473bp,296bp) and (440bp,273bp)  .. (407bp,253bp);
  \draw [->] (241bp,530bp) .. controls (262bp,527bp) and (290bp,523bp)  .. (323bp,518bp);
  \pgfsetcolor{blue}
  \draw [->,very thick] (371bp,204bp) .. controls (371bp,203bp) and (370bp,201bp)  .. (368bp,190bp);
  \pgfsetcolor{black}
  \draw [->] (373bp,766bp) .. controls (340bp,781bp) and (282bp,808bp)  .. (236bp,829bp);
  \draw [->] (215bp,95bp) .. controls (208bp,115bp) and (200bp,141bp)  .. (189bp,171bp);
  \pgfsetcolor{blue}
  \draw [->,very thick] (453bp,318bp) .. controls (439bp,303bp) and (422bp,284bp)  .. (401bp,260bp);
  \pgfsetcolor{black}
  \draw [->] (613bp,537bp) .. controls (578bp,538bp) and (516bp,541bp)  .. (466bp,542bp);
  \draw [->] (453bp,566bp) .. controls (471bp,596bp) and (503bp,651bp)  .. (529bp,695bp);
  \draw [->] (529bp,694bp) .. controls (511bp,664bp) and (479bp,609bp)  .. (453bp,565bp);
  \draw [->] (503bp,358bp) .. controls (513bp,363bp) and (524bp,369bp)  .. (543bp,379bp);
  \draw [->] (76bp,431bp) .. controls (102bp,452bp) and (145bp,485bp)  .. (183bp,514bp);
  \draw [->] (194bp,563bp) .. controls (179bp,593bp) and (155bp,641bp)  .. (134bp,684bp);
  \draw [->] (517bp,724bp) .. controls (493bp,730bp) and (458bp,739bp)  .. (422bp,748bp);
  \draw [->] (402bp,729bp) .. controls (410bp,692bp) and (425bp,621bp)  .. (435bp,569bp);
\begin{scope}
  \pgfsetstrokecolor{black}
  \draw (209bp,534bp) ellipse (32bp and 33bp);
  \draw (209bp,534bp) node {11};
\end{scope}
\begin{scope}
  \pgfsetstrokecolor{black}
  \draw (226bp,64bp) ellipse (32bp and 33bp);
  \draw (226bp,64bp) node {10};
\end{scope}
\begin{scope}
  \pgfsetstrokecolor{black}
  \draw (181bp,352bp) ellipse (32bp and 33bp);
  \draw (181bp,352bp) node {13};
\end{scope}
\begin{scope}
  \pgfsetstrokecolor{black}
  \draw (355bp,514bp) ellipse (32bp and 33bp);
  \draw (355bp,514bp) node {12};
\end{scope}
\begin{scope}
  \pgfsetstrokecolor{black}
  \pgfsetfillcolor{lightgray}
  \filldraw (475bp,342bp) ellipse (32bp and 33bp);
  \draw (475bp,342bp) node {20};
\end{scope}
\begin{scope}
  \pgfsetstrokecolor{black}
  \pgfsetfillcolor{lightgray}
  \filldraw (360bp,158bp) ellipse (32bp and 33bp);
  \draw (360bp,158bp) node {21};
\end{scope}
\begin{scope}
  \pgfsetstrokecolor{black}
  \pgfsetfillcolor{lightgray}
  \filldraw (379bp,236bp) ellipse (32bp and 33bp);
  \draw (379bp,236bp) node {17};
\end{scope}
\begin{scope}
  \pgfsetstrokecolor{black}
  \draw (33bp,600bp) ellipse (32bp and 33bp);
  \draw (33bp,600bp) node {16};
\end{scope}
\begin{scope}
  \pgfsetstrokecolor{black}
  \draw (474bp,33bp) ellipse (32bp and 33bp);
  \draw (474bp,33bp) node {19};
\end{scope}
\begin{scope}
  \pgfsetstrokecolor{black}
  \draw (546bp,168bp) ellipse (32bp and 33bp);
  \draw (546bp,168bp) node {18};
\end{scope}
\begin{scope}
  \pgfsetstrokecolor{black}
  \draw (119bp,713bp) ellipse (32bp and 33bp);
  \draw (119bp,713bp) node {15};
\end{scope}
\begin{scope}
  \pgfsetstrokecolor{black}
  \draw (440bp,543bp) ellipse (26bp and 26bp);
  \draw (440bp,543bp) node {1};
\end{scope}
\begin{scope}
  \pgfsetstrokecolor{black}
  \draw (542bp,717bp) ellipse (26bp and 26bp);
  \draw (542bp,717bp) node {3};
\end{scope}
\begin{scope}
  \pgfsetstrokecolor{black}
  \draw (639bp,536bp) ellipse (26bp and 26bp);
  \draw (639bp,536bp) node {2};
\end{scope}
\begin{scope}
  \pgfsetstrokecolor{black}
  \draw (397bp,755bp) ellipse (26bp and 26bp);
  \draw (397bp,755bp) node {5};
\end{scope}
\begin{scope}
  \pgfsetstrokecolor{black}
  \draw (566bp,392bp) ellipse (26bp and 26bp);
  \draw (566bp,392bp) node {4};
\end{scope}
\begin{scope}
  \pgfsetstrokecolor{black}
  \draw (181bp,196bp) ellipse (26bp and 26bp);
  \draw (181bp,196bp) node {7};
\end{scope}
\begin{scope}
  \pgfsetstrokecolor{black}
  \draw (212bp,840bp) ellipse (26bp and 26bp);
  \draw (212bp,840bp) node {6};
\end{scope}
\begin{scope}
  \pgfsetstrokecolor{black}
  \draw (290bp,367bp) ellipse (26bp and 26bp);
  \draw (290bp,367bp) node {9};
\end{scope}
\begin{scope}
  \pgfsetstrokecolor{black}
  \draw (69bp,202bp) ellipse (26bp and 26bp);
  \draw (69bp,202bp) node {8};
\end{scope}
\begin{scope}
  \pgfsetstrokecolor{black}
  \draw (50bp,411bp) ellipse (32bp and 33bp);
  \draw (50bp,411bp) node {14};
\end{scope} 
\end{tikzpicture}
}
\caption{Another strict local maximum for the same HITS authority optimization problem as in Figure~\ref{fig:GraphOptHits1}.}
\label{fig:GraphOptHits2}
\end{figure}

\begin{countex} \label{prop:HitsNotQuasiConvexConcave}
The relaxed HITS authority optimization problem is in general not quasi-convex nor quasi-concave.
\end{countex}
\begin{proof}
 Any strict local maximum of a quasi-convex problem is necessarily an extreme of the admissible polyhedral set
(this is a simple extension of Theorem~3.5.3 in~\cite{Bazaraa-Nonlinopt}).
The example on Figure~\ref{fig:GraphOptHits1} shows that this is not the case here.

A quasi-concave problem can have only one strict local maximum (although it may have many local maxima).
The examples on Figures~\ref{fig:GraphOptHits1} and~\ref{fig:GraphOptHits2}
show two distinct strict local maxima for a HITS authority optimization problem.
\end{proof}

\begin{heuristic}
These examples also show that the relaxed HITS authority optimization problem~\eqref{eqn:HITSoptim} does not give
binary solutions that would be then solutions of the initial HITS authority optimization problem~\eqref{eqn:HITSoptimRelaxed}.
Hence we propose the following heuristic to get ``good'' binary solutions.
From a stationary point of the relaxed problem, define the function $\phi : [0,1] \to \mathbb{R}$
such that $\phi(x)$ is the value of the objective function when we select in~\eqref{eqn:HITSoptim} all the 
links with weight bigger than $x$ in the stationary point of~\eqref{eqn:HITSoptimRelaxed}.
We only need to compute it at a finite number of points since $\phi$ is piecewise constant.
We then select the best threshold.
For instance, with the stationary point of Figure~\ref{fig:GraphOptHits1}, this heuristic suggests not to
select the weighted link.
\end{heuristic}

\section{Optimization of HOTS} \label{sec:HOTS}

\subsection{Tomlin's HOTS algorithm}

HOTS algorithm was introduced by Tomlin in~\cite{Tomlin-HOTS}.
In this case, the ranking is the vector of dual variables of an optimal flow problem. The flow
represents an optimal distribution of web surfers on the web graph in the sense of
entropy minimization. It only takes into account the web graph, so
like PageRank, it is query independent link-based search algorithm. 
The dual variable, one by page, is interpreted as the ``temperature'' of
the page, the hotter a page the better.
Tomlin showed that this vector is solution of a nonlinear fix point equation: it may be seen
as a nonlinear eigenvector. Indeed, most of the arguments available in the case
of Perron vector optimization can be adapted to HOTS optimization: we show that
the matrix of partial derivatives of the objective has a low rank and
that it can be evaluated by a power-type algorithm. Also, as for PageRank and HITS authority optimization,
we show that a threshold property holds.

Denote $G=(V,E)$ the web graph with adjacency matrix $A=(A_{i,j})$ and consider a modified graph
$G'=(V',E')$ where $V'=V\cup \{n+1\}$ and $E'=E \cup (\cup_{i\in V} \{i, n+1\}) \cup (\cup_{j \in V} \{n+1, j\})$.
Fix $\alpha \in (1/2,1)$.
Then the maximum entropy flow problem considered in~\cite{Tomlin-HOTS} is given by
\begin{align*}
\max_{\rho \geq 0} & -\sum_{ e \in E'} \rho_e \log(\rho_e)  \\
\sum_{j \in V'} \rho_{i,j} = &\sum_{j \in V'} \rho_{j,i} \; , \; \forall i \in V'  & (p_i) \\
\sum_{i,j \in V'} \rho_{ij} = & 1        & (\mu) \\
\sum_{j \in V'} \rho_{n+1,j} = & 1-\alpha & (a_{n+1}) \\
1-\alpha =& \sum_{i \in V'} \rho_{i,n+1}&  (b_{n+1})
\end{align*}

The dual of this optimization problem is
\begin{equation} \label{eqn:dualHots}
 \begin{split}
   \min_{(p, \mu, a_{n+1}, b_{n+1}) \in \mathbb{R}^n \times \mathbb{R}^3} \tilde{\theta}(p,\mu,a_{n+1},b_{n+1}) :=  \sum_{i,j \in [n]} A_{ij} e^{p_i - p_j+\mu} + \sum_{i \in [n]} e^{-b_{n+1}+p_i+\mu} \\
+ \sum_{j \in [n]} e^{a_{n+1}-p_j +\mu} -(1-\alpha)a_{n+1}-\mu+(1-\alpha)b_{n+1} \enspace .
 \end{split}
\end{equation}

This problem is a form of matrix balancing (see~\cite{Schneider-scaling}).
For $p$ being an optimum of Problem~\eqref{eqn:dualHots}, 
the \NEW{HOTS value} of page~$i$ is defined to be $\exp(p_i)$.
 It is interpreted as the temperature of the page and we sort pages
from hotter to cooler.

The dual problem~\eqref{eqn:dualHots} consists in the non constrained minimization of a convex function,
thus a necessary and sufficient optimality condition is the cancelation of the gradient.
From these equations, we can recover a dual form of the fix point equation described in~\cite{Tomlin-HOTS}. Note that we take the convention of~\cite{Schneider-scaling}
so that our dual variables are the opposite of Tomlin's.

From the expressions of $a_{n+1}$, $b_{n+1}$ and $\mu$ at the optimum, respectively given by $e^{a_{n+1}} = \frac{1-\alpha}{\sum_{j \in [n]}e^{-p_j}}e^{-\mu}$, $e^{-b_{n+1}} = \frac{1-\alpha}{\sum_{i \in [n]}e^{p_i}}e^{-\mu}$ and $e^{\mu}=\frac{2\alpha-1}{\sum_{i,j \in [n]} A_{ij} e^{p_i - p_j}}$,
we can write $\tilde{\theta}$ as a function of $p$ only:
\begin{equation} \label{eqn:theta}
\begin{split}
 \theta(p) & = \min_{a_{n+1}, b_{n+1},\mu} \tilde{\theta}(p,\mu,a_{n+1},b_{n+1})\\
&=C(\alpha)+\phi(-p) +\phi(p) + (2\alpha -1) \log(\sum_{i,j\in [n]} A_{i,j} e^{p_i-p_j})
\end{split}
\end{equation}
where $C(\alpha)=1 -2(1-\alpha) \log(1-\alpha)-(2\alpha-1) \log(2\alpha-1)$ and where $\phi(p) = (1-\alpha) \log(\sum_{i\in [n]} e^{p_i})$. Its gradient is given by
\begin{equation*}
 \frac{\partial \theta}{\partial p_l}(p)=-(1-\alpha) \frac{e^{-p_l}}{\sum_j e^{-p_j}} +(1-\alpha) \frac{e^{p_l}}{\sum_i e^{p_i}} 
-(2\alpha-1) \frac{\sum_i A_{il}e^{p_i-p_l}}{\sum_{i,j} A_{ij} e^{p_i-p_j}} + (2\alpha-1) \frac{\sum_j A_{lj}e^{p_l-p_j}}{\sum_{i,j} A_{ij} e^{p_i-p_j}}
\end{equation*}
This equality can be also written as
\begin{equation*}
e^{2 p_l} \left( (2\alpha-1) \frac{\sum_j A_{lj}e^{-p_j}}{\sum_{i,j} A_{ij} e^{p_i-p_j}} +\frac{1-\alpha}{\sum_i e^{p_i}} \right ) = 
(2\alpha-1) \frac{\sum_i A_{il}e^{p_i}}{\sum_{i,j} A_{ij} e^{p_i-p_j}} + \frac{1-\alpha}{\sum_j e^{-p_j}} + e^{p_l}\frac{\partial \theta}{\partial p_l}(p)
\end{equation*}
which yields for all $p$ in $\mathbb{R}^n$,
\begin{multline*}
 p_l=\frac{1}{2} [\log ( (\sum_i A_{il}e^{p_i}) (\sum_j e^{-p_j}) + \frac{\sum_{i,j} A_{ij} e^{p_i-p_j}}{2\alpha -1}( 1-\alpha 
+e^{ p_l}\frac{\partial \theta}{\partial p_l}(p)) ) \\
		- \log(\sum_j e^{-p_j}) -\log ( (\sum_j A_{lj}e^{-p_j}) (\sum_i e^{p_i}) + \frac{1-\alpha}{2\alpha -1}\sum_{i,j} A_{ij} e^{p_i-p_j} )
		+ \log(\sum_i e^{p_i})] .
\end{multline*}

Let $u$ be the function from $\mathbb{R}^n$ to $\mathbb{R}^n$ defined by
\begin{multline*}
u_l(p)=\frac{1}{2} [\log ( (\sum_i A_{il}e^{p_i}) (\sum_j e^{-p_j}) + \frac{1-\alpha}{2\alpha -1}\sum_{i,j} A_{ij} e^{p_i-p_j} )
		- \log(\sum_j e^{-p_j})\\ -\log ( (\sum_j A_{lj}e^{-p_j}) (\sum_i e^{p_i}) + \frac{1-\alpha}{2\alpha -1}\sum_{i,j} A_{ij} e^{p_i-p_j} )
		+ \log(\sum_i e^{p_i})] \enspace .
\end{multline*}
Using the formula $\log(A)=\log(A+B)-\log(1+B/A)$, we may also write it as
\begin{align*}
 u_l(p)=p_l-\frac{1}{2} \log(1+ d_l \frac{\partial \theta}{\partial p_l}(p)) \enspace .
\end{align*}
where 
\begin{equation} \label{eqn:def-d}
d_l=\frac{e^{ p_l}(\sum_j e^{-p_j})(\sum_{i,j} A_{ij} e^{p_i-p_j})}{(2\alpha -1)(\sum_i A_{il}e^{p_i}) (\sum_j e^{-p_j}) + (1-\alpha)\sum_{i,j} A_{ij} e^{p_i-p_j}} >0\enspace . 
\end{equation}

We can see that the equation $u(p)=p$ is equivalent to
 $\frac{\partial \theta}{\partial p_l}(p)=0$
but also that successively applying the function $u$ corresponds to a descent algorithm.
This is the dual form of Tomlin's algorithm for the computation of HOTS values. 
Note that we do not compute the values of $a_{n+1}$, $b_{n+1}$ and $\mu$
 but give an explicit formula of the fix point operator.

The next proposition gives information on the spectrum of the hessian of the function $\theta$.
\begin{prop} \label{prop:rankD2theta}
 The hessian of the function $\theta$~\eqref{eqn:theta} is symmetric semi-definite with spectral norm smaller that $4$. Its nullspace has dimension~1 exactly for all~$p$ 
and a basis of this nullspace is given by the vector $e$, with all entries equal to~1.
\end{prop}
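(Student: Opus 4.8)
The plan is to use the decomposition $\theta(p)=C(\alpha)+\phi(-p)+\phi(p)+(2\alpha-1)\log Z(p)$ obtained in~\eqref{eqn:theta}, where $Z(p)=\sum_{i,j}A_{ij}e^{p_i-p_j}$ and $\phi(p)=(1-\alpha)\log\sum_i e^{p_i}$, and to analyse the Hessian term by term. Each of the three summands is a log-sum-exp (the middle one precomposed with the linear map $p\mapsto -p$), so $\theta$ is convex and $\nabla^2\theta(p)\succeq 0$; equivalently, $\theta$ is a partial minimum of the jointly convex $\tilde\theta$. This already gives symmetry and positive semidefiniteness.

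First I would write the three Hessians explicitly. Setting $s_i=e^{p_i}/\sum_k e^{p_k}$ and $t_i=e^{-p_i}/\sum_k e^{-p_k}$, a direct differentiation gives that the Hessian of $p\mapsto\phi(p)$ is $(1-\alpha)(\diag(s)-ss^T)$ and that of $p\mapsto\phi(-p)$ is $(1-\alpha)(\diag(t)-tt^T)$. For the last term, let $R$ be the matrix whose row indexed by an arc $(i,j)$ of $G$ equals $(e_i-e_j)^T$, and let $q$ be the probability vector on arcs with $q_{ij}=A_{ij}e^{p_i-p_j}/Z(p)$; since $\log Z(p)=F(Rp)$ for a log-sum-exp function $F$, the chain rule yields that the Hessian of $p\mapsto(2\alpha-1)\log Z(p)$ is $(2\alpha-1)\,R^T(\diag(q)-qq^T)R$. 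Each matrix $\diag(r)-rr^T$ (for a probability vector $r$) is the covariance of a categorical law, hence symmetric positive semidefinite, and this is preserved by the congruence $M\mapsto R^TMR$, so $\nabla^2\theta(p)$ is a sum of symmetric positive semidefinite matrices.

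Next I would bound the spectral norm through the quadratic form. For any $x\in\mathbb{R}^n$, $x^T(\diag(s)-ss^T)x=\sum_i s_ix_i^2-(\sum_i s_ix_i)^2\le\sum_i s_ix_i^2\le\norm{x}_2^2$, and likewise for $t$; for the third term, $x^TR^T(\diag(q)-qq^T)Rx\le\sum_{(i,j)}q_{ij}(x_i-x_j)^2\le 2\sum_{(i,j)}q_{ij}(x_i^2+x_j^2)\le 4\norm{x}_2^2$, where the last step uses $(x_i-x_j)^2\le 2x_i^2+2x_j^2$ together with the fact that the arc marginals $\sum_j q_{ij}$ and $\sum_i q_{ij}$ are each at most $1$. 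Adding the three contributions, $0\le x^T\nabla^2\theta(p)x\le\bigl(2(1-\alpha)+4(2\alpha-1)\bigr)\norm{x}_2^2=(6\alpha-2)\norm{x}_2^2$, which is $<4\norm{x}_2^2$ since $\alpha<1$; hence $\norm{\nabla^2\theta(p)}_2\le 6\alpha-2<4$.

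Finally, for the nullspace I would use that $\nabla^2\theta(p)$ is a sum of positive semidefinite matrices: if $\nabla^2\theta(p)x=0$ then each summand annihilates $x$, so in particular $(\diag(s)-ss^T)x=0$, i.e. $s_i(x_i-s^Tx)=0$ for every $i$; since $s_i>0$ this forces $x$ to be constant, that is $x\in\mathbb{R}e$. Conversely $(\diag(s)-ss^T)e=0$, $(\diag(t)-tt^T)e=0$ and $Re=0$, so $e\in\ker\nabla^2\theta(p)$; therefore $\ker\nabla^2\theta(p)=\mathbb{R}e$, of dimension exactly one, for every $p$. The only point needing a little care is the constant $4$ in the estimate of the dominant third term — it is precisely what the inequality $(x_i-x_j)^2\le 2x_i^2+2x_j^2$ and the sub-stochasticity of the marginals of $q$ deliver — while everything else is routine differentiation of log-sum-exp functions.
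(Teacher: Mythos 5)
Your proposal is correct and follows essentially the same route as the paper: decompose $\nabla^2\theta$ into the three log-sum-exp Hessians, get positive semidefiniteness from convexity, pin the kernel down to $\mathbb{R}e$ via the strict positivity of the quadratic form of $\diag(s)-ss^T$ on non-constant vectors (the Cauchy--Schwarz equality case), and bound the norm by $(1-\alpha)+(1-\alpha)+4(2\alpha-1)=6\alpha-2<4$. The only cosmetic difference is how the factor $4$ on the third term is obtained — you use $(x_i-x_j)^2\le 2x_i^2+2x_j^2$ plus sub-stochasticity of the marginals of $q$, while the paper uses $\norm{Zy}_\infty\le 2\norm{y}_\infty\le 2\norm{y}_2$ — both are routine and equivalent in effect.
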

\begin{proof}
As $\theta$ is convex, its hessian matrix is clearly symmetric semi-definite.

Now, let $\phi: x\mapsto \log (\sum_i e^{x_i})$ be the log-sum-exp function. 

We have $y^T \nabla^2 \phi(x) y =\frac{\sum_i y_i^2 e^{x_i}}{\sum_k e^{x_k}} - \frac{(\sum_i y_i e^{x_i})^2}{(\sum_k e^{x_k})^2}$. This expression is strictly positive for any non constant $y$,
because a constant $y$ is the only equality case of the Cauchy Schwartz inequality $\sum_i y_i e^{x_i/2} e^{x_i/2} \leq (\sum_i e^{x_k})^{1/2} (\sum_i y_i^2 e^{x_i})^{1/2}$.
As the function $\theta$ is the sum of $\phi$ (the third term of~\eqref{eqn:theta}) and of convex functions, it inherits the 
 strict convexity property on spaces without constants.
This development even shows that the kernel of $\nabla^2 \theta(p)$ is of dimension at most~1 for all~$p$.
Finally, as $\theta$ is invariant by addition of a constant, the vector $e$ is clearly part of the nullspace of its hessian. 

For the norm of the hessian matrix, we introduce the linear function $Z: \mathbb{R}^n \to \mathbb{R}^{n \times n}$ such that $(Zp)_{i,j}=p_i -p_j$
and $\tilde{\phi}: z \mapsto \log (\sum_{k\in [n]\times[n]} A_k e^{z_k})$.
Then $\theta(p)=C(\alpha)+(1-\alpha)\phi(p)+(1-\alpha)\phi(-p)+(2\alpha -1)\tilde{\phi}(Zp)$.

$0\leq y^T \nabla^2 \phi (x) y = \frac{\sum_i y_i^2 e^{x_i}}{\sum_k e^{x_k}} - \frac{(\sum_i y_i
 e^{x_i})^2}{(\sum_k e^{x_k})^2} \leq \frac{\sum_i y_i^2 e^{x_i}}{\sum_k e^{x_k}} \leq \norm{y}_{\infty}^2 \leq \norm{y}_{2}^2$.
Thus $\norm{\nabla^2 \phi}_2 \leq 1$. By similar calculations, one gets $y^T Z^T \nabla^2 \tilde{\phi}(x) Z y \leq \norm{Z y}_{\infty}^2$.
As $\norm{Zy}_{\infty}=\max_{i,j} \abs{y_i - y_j} \leq 2 \norm{y}_{\infty}\leq 2 \norm{y}_{2}$,
we have that $\norm{\nabla^2 \theta}_2\leq (1-\alpha) +(1-\alpha) +(2\alpha -1) \times 4 = 6 \alpha -2 < 4$.
Finally, for symmetric matrices, the spectral norm and the operator 2-norm are equal.
\end{proof}

\subsection{Optimization of a scalar function of the HOTS vector} \label{sec:HOTSder}

As for HITS authority in Section~\ref{sec:HITS},
we now consider sets of obligatory links, prohibited links and facultative links.
From now on, the adjacency matrix $A$ may change, so we define $\theta$ and $u$ as functions of $p$ and $A$.
For all $A$, the HOTS vector is uniquely defined up to an additive constant for $\alpha<1$, so we shall set a normalization, like for instance $\sum_i p_i =0$ or 
$\log(\sum_i \exp(p_i))=0$. Thus, given a normalization function $N$, we can define the function 
$p : A \mapsto p(A)$. For all $A$, $i$, $j$, the normalization function $N$ may verify $\frac{\partial N}{\partial p}(p(A)) \frac{\partial p}{\partial A_{i,j}}(A)=0$ 
and $N(p+\lambda)=N(p)+\lambda$ for all $\lambda \in \mathbb{R}$, so that $\frac{\partial N}{\partial p}(p) e=1$.

The HOTS authority optimization problem is:
\begin{equation} \label{eqn:HOTSoptim}
  \max_{J \subseteq \mathcal{F},p \in \mathbb{R}^n,\lambda \in \mathbb{R}} \{ f(p) \; \; ; \;  u(A(J),p) = p \;, \; N(p)=0 \;, 
 \} 
\end{equation} 
We shall mainly study instead the relaxed HOTS authority optimization problem which can be written as:
\begin{equation} \label{eqn:HOTSoptimRelaxed}
\begin{split}
  \max_{A\in \mathbb{R}^{n \times n},p \in \mathbb{R}^n} & f(u) \\
u(A,p) =p \;&, \;\ N(p)=0 \\
 A_{i,j}=1 \; &,\; \forall (i,j) \in \mathcal{O} \\
 A_{i,j}=0\; &,\; \forall (i,j) \in \mathcal{I} \\
0\leq A_{i,j} \leq 1\;&,\;  \forall (i,j) \in \mathcal{F}
\end{split}
\end{equation} 

where 
$f:\mathbb{R}^n \to \mathbb{R}$ is the objective function. 
We will assume that $f$ is differentiable with gradient $\nabla f$.

It is easy to see that $u(A,\cdot)$ is additively homogeneous of degree 1, so the solution $p$ of the equation $u(A,p)=p$ may be seen
as a nonlinear additive eigenvector of $u(A,\cdot)$. In this section, we give the derivative of the HOTS vector with respect to the adjacency matrix.

\begin{prop} \label{prop:hotsDer}
The derivative of $f \circ p$ is given by $g_{i,j}=\sum_l w_l c_{i,j}^l$ where
\[
 w=(-\nabla f^T+ (\nabla f^T e) \nabla N^T) (\frac{\partial^2 \theta}{\partial p^2})^{\#}
\]
and $c_{i,j}^l = \frac{\partial^2 \theta}{\partial p_l \partial A_{i,j}}$.
Moreover, the matrix $(g_{i,j})_{i,j}$ has rank at most 3.
\end{prop}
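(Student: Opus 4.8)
The plan is to reproduce, in the HOTS setting, the same differentiation-through-a-fixed-point argument used for the Perron vector in Corollary~\ref{prop:gcool}, replacing the eigenvalue equation $Mu=\lambda u$ by the stationarity equation $\nabla\theta(p,A)=0$ and replacing the Drazin inverse $(M-\lambda I)^{\#}$ by the group inverse $(\partial^2\theta/\partial p^2)^{\#}$ of the Hessian. First I would observe that, by Proposition~\ref{prop:rankD2theta}, the Hessian $H(p):=\frac{\partial^2\theta}{\partial p^2}(p,A)$ is symmetric positive semidefinite with one-dimensional nullspace spanned by $e$; hence $H$ restricted to $e^{\perp}$ is invertible and the group inverse $H^{\#}$ is well defined, with $HH^{\#}=H^{\#}H=I-\frac{1}{n}ee^{T}$ (the spectral projector onto $e^{\perp}$ along $e$).

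Next I would differentiate the implicit relation $\frac{\partial\theta}{\partial p_l}(p(A),A)=0$ with respect to $A_{i,j}$, together with the normalization constraint $N(p(A))=0$. Differentiating the stationarity condition gives $\sum_m H_{lm}\,\frac{\partial p_m}{\partial A_{i,j}} + c_{i,j}^{l}=0$, i.e. $H\,\frac{\partial p}{\partial A_{i,j}} = -c_{i,j}$ where $c_{i,j}=(c_{i,j}^{l})_l$; differentiating $N$ gives $\nabla N^{T}\frac{\partial p}{\partial A_{i,j}}=0$. Since $H$ is only invertible on $e^{\perp}$, I would write $\frac{\partial p}{\partial A_{i,j}} = -H^{\#}c_{i,j} + \gamma_{i,j}\,e$ for some scalar $\gamma_{i,j}$, solve for $\gamma_{i,j}$ from the normalization equation using $\nabla N^{T}e=1$ (noted in the paragraph preceding the statement), and obtain
\[
\frac{\partial p}{\partial A_{i,j}} = -H^{\#}c_{i,j} + (\nabla N^{T}H^{\#}c_{i,j})\,e \enspace .
\]
Then the chain rule yields $g_{i,j}=\nabla f^{T}\frac{\partial p}{\partial A_{i,j}} = \big(-\nabla f^{T}+(\nabla f^{T}e)\nabla N^{T}\big)H^{\#}c_{i,j} = \sum_l w_l c_{i,j}^{l}$ with $w$ exactly as stated. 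This is the direct analogue of the computation in the proof of Corollary~\ref{prop:gcool}, with $H^{\#}$ playing the role of $S$ and $e$ the role of $u$.

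For the rank bound, I would compute $c_{i,j}^{l}=\frac{\partial^2\theta}{\partial p_l\partial A_{i,j}}$ explicitly from the formula for $\theta$ in~\eqref{eqn:theta}: only the last term $(2\alpha-1)\tilde\phi(Zp)$ with $\tilde\phi(z)=\log(\sum_k A_k e^{z_k})$ depends on $A$, so $\frac{\partial\theta}{\partial A_{i,j}} = (2\alpha-1)\frac{e^{p_i-p_j}}{\sum_{k,m}A_{km}e^{p_k-p_m}}$, and differentiating this once more in $p_l$ shows that $c_{i,j}^{l}$ is a combination of a term supported on $l\in\{i,j\}$ (coefficients $\pm(2\alpha-1)e^{p_i-p_j}/\Sigma$, where $\Sigma=\sum_{k,m}A_{km}e^{p_k-p_m}$) and a term proportional to $\frac{\partial}{\partial p_l}\log\Sigma$ (independent of $(i,j)$ up to the scalar factor $e^{p_i-p_j}/\Sigma$). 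Consequently, as a function of $l$, $c_{i,j}^{\cdot}$ lies in the span of the three vectors $e_i$, $e_j$ and $\nabla_p\log\Sigma$; writing $g_{i,j}=w^{T}c_{i,j}$ then expresses $g$ as a sum of three rank-one matrices in $(i,j)$ — namely $w_i$ times $\mathbf 1$-type patterns in the first index, $w_j$ in the second, and a global rank-one piece from $\nabla_p\log\Sigma$ — whence $\operatorname{rank}(g)\le 3$. The main obstacle is purely bookkeeping: carefully carrying out the second mixed partial of $\theta$ and organizing the resulting terms so that the three-dimensional span is manifest; the conceptual steps (group inverse, implicit differentiation, chain rule) are routine once Proposition~\ref{prop:rankD2theta} is invoked.
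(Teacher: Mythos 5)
Your proof is correct and follows essentially the same route as the paper: implicit differentiation of the optimality condition to get $\frac{\partial^2\theta}{\partial p^2}\,\frac{\partial p}{\partial A_{i,j}}=-c_{i,j}$, resolution of the component along $e$ via the group/Drazin inverse and the normalization identity $\nabla N^{T}e=1$, and the explicit computation of $c_{i,j}^{l}$ showing it lies in the span of $e_i$, $e_j$ and $\nabla_p\log\Sigma$, whence the three rank-one pieces. The only cosmetic difference is that the paper differentiates the fixed-point form $p=u(A,p)$ and then uses $\frac{\partial\theta}{\partial p_l}=0$ and $d_l>0$ to reduce to the same linear system you obtain directly from $\nabla_p\theta=0$.
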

\begin{proof}
 Let us differentiate 
 with respect to $A_{i,j}$ the equation
\begin{align*}
 p_l(A) &= u_l(A,p(A))=p_l(A)-\frac{1}{2} \log(1+ d_l(A,p(A)) \frac{\partial \theta}{\partial p_l}(A,p(A))) \enspace :\\
 \frac{\partial p_l}{\partial A_{i,j}} &=\frac{\D u_l(A,p(A))}{\D A_{i,j}} \\
&= \frac{\partial p_l}{\partial A_{i,j}} - \frac{1}{2}
\frac{1}{1+ d_l \frac{\partial \theta}{\partial p_l}} 
(\frac{\D d_l}{\D A_{i,j}} \frac{\partial \theta}{\partial p_l}
+d_l \sum_k \frac{\partial^2 \theta}{\partial p_l \partial A_{i,j}}
 +d_l\frac{\partial^2 \theta}{\partial p_l \partial p_k} \frac{\partial p_k}{ \partial A_{i,j}})
\end{align*}
But as $\frac{\partial \theta}{\partial p_l}(A,p(A))=0$ and $d_l(A,p(A))>0$, we get for all $A$, $i$, $j$, $l$:
\[
 \sum_k \frac{\partial^2 \theta}{\partial p_l \partial p_k}(A,p(A)) \frac{\partial p_k}{ \partial A_{i,j}}(A) = -\frac{\partial^2 \theta}{\partial p_l \partial A_{i,j}}(A,p(A))
\]
Or more simply, $ \frac{\partial^2 \theta}{\partial p^2} \frac{\partial p}{ \partial A_{i,j}} = -c_{i,j}$.
Multiplying by $(\frac{\partial^2 \theta}{\partial p^2})^{\#}$, using the fact that the nullspace of $\frac{\partial^2 \theta}{\partial p^2}$ is the multiples of
$e$ (Proposition~\ref{prop:rankD2theta}) and using~\eqref{eqn:S} yields:
\begin{equation} \label{eqn:drazinApplied}
 (I - \frac{e e^T}{n}) \frac{\partial p}{ \partial A_{i,j}} = -(\frac{\partial^2 \theta}{\partial p^2})^{\#}c_{i,j} \enspace .
\end{equation}

Multiplying by $\nabla N^T$ gives $\frac{\nabla N^T e}{n} e^T \frac{\partial p}{ \partial A_{i,j}}= \nabla N^T (\frac{\partial^2 \theta}{\partial p^2})^{\#}c_{i,j} $. We then use $\nabla N^T e=1$, we reinject in~\eqref{eqn:drazinApplied}
 and we multiply by $\nabla f^T$ to get the result $\nabla f^T \frac{\partial p}{ \partial A_{i,j}} = (-\nabla f^T+ (\nabla f^T e) \nabla N^T) (\frac{\partial^2 \theta}{\partial p^2})^{\#}c_{i,j}$.

Finally, the equality
\begin{equation*}
\begin{split}
c_{i,j}^l= \frac{\partial^2 \theta}{\partial p_l \partial A_{i,j}}  = \frac{2\alpha -1}{\sum_{i',j'} A_{i',j'} e^{p_{i'}-p_{j'}}}( -e^{p_i}\delta_{lj}e^{-p_j} + \delta_{li} e^{p_i}e^{-p_j} + B_l e^{p_i}e^{-p_j}) 
\end{split}
\end{equation*}
where
$B_l=\frac{\sum_{i'} A_{i',l} e^{p_{i'}-p_l}-\sum_{j'} A_{l,j'}e^{p_l-p_{j'}}}{\sum_{i',j'} A_{i',j'} e^{p_{i'}-p_{j'}}}$,
 shows that the matrix $(g_{i,j})_{i,j}$ has rank at most~3 since we can write it as the sum of three rank one matrices.
\end{proof}

This proposition is the analog of Corollary~\ref{prop:gcool}, the latter being for Perron vector optimization problems.
It both cases, the derivative has a low rank and one can compute it thanks to a Drazin inverse. 
Moreover, thanks to Proposition~\ref{prop:rankD2theta}, one can apply
Proposition~\ref{prop:wFast} to $M=I_n -\frac{1}{2} \nabla^2\theta$
and $z'=-\frac{1}{2}z$.
Indeed, $M$ has all its eigenvalues within $(-1,1]$, $1$ is a single eigenvalue
with $e e^T/n$ being the associated eigenprojector.
So, for HOTS optimization problems as well as for Perron vector optimization problems, the derivative is easy to compute as soon as the second eigenvalue of $\nabla^2\theta$
is not too small.

Remark that Proposition~\ref{prop:hotsDer} is still true if we replace $\nabla^2\theta$ by $\diag(d) \nabla^2\theta$
and $c_{i,j}$ by $\diag(d) c_{i,j}$. We conjecture that $1$ is still the principal eigenvalue of $I_n -\frac{1}{2} \diag(d) \nabla^2\theta$ and that the spectral gap is larger than the one of $I_n -\frac{1}{2} \nabla^2\theta$. Numerical experiments seem to confirm this conjecture.

For HOTS optimization also, we have a threshold property.
\begin{prop}[Threshold property] \label{prop:HotsOptStrats}
 Let $A$ be a stationary point for the relaxed HOTS optimization problem~\eqref{eqn:HOTSoptimRelaxed} with associated HOTS vector $p$ and
let $w$ be defined as in
Proposition~\ref{prop:hotsDer}. Let $B=\frac{\sum_{k,l} A_{k,l} e^{p_k - p_l}w_l - \sum_{k,l} w_k A_{k,l} e^{p_k - p_l}}{\sum_{k,l}A_{k,l}e^{p_k - p_l}}$.
Then for all facultative link $(i,j)$, $w_j > w_i + B$ implies that $A_{i,j}=1$
and $w_j < w_i + B$ implies that $A_{i,j}=0$.
\end{prop}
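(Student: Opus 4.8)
The plan is to handle the relaxed HOTS problem~\eqref{eqn:HOTSoptimRelaxed} exactly as its HITS counterpart. Once the equality constraints $u(A,p)=p$ and $N(p)=0$ are used to express $p$ as the implicit map $A\mapsto p(A)$ --- the very object differentiated in Proposition~\ref{prop:hotsDer} --- the only constraints left on the facultative entries of $A$ are the box constraints $0\le A_{i,j}\le 1$. Hence, at a stationary point, the first-order optimality argument in the proof of Proposition~\ref{prop:shapeHits} applies: a nonzero value of the partial derivative $g_{i,j}=\sum_l w_l c_{i,j}^l$ of the objective determines which side of the box is active, with $g_{i,j}<0$ forcing $A_{i,j}=1$ and $g_{i,j}>0$ forcing $A_{i,j}=0$. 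So the whole statement reduces to reading off the sign of $g_{i,j}$.

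First I would insert into $g_{i,j}=\sum_l w_l c_{i,j}^l$ the explicit expression for $c_{i,j}^l$ obtained in the proof of Proposition~\ref{prop:hotsDer}. Writing $D:=\sum_{k,l}A_{k,l}e^{p_k-p_l}$ (which is positive, since $A\neq 0$ is what makes $\theta$ in~\eqref{eqn:theta} well defined), that expression reads $c_{i,j}^l=\tfrac{2\alpha-1}{D}\,e^{p_i-p_j}\bigl(\delta_{li}-\delta_{lj}+B_l\bigr)$ with $B_l=\tfrac1D\bigl(\sum_k A_{k,l}e^{p_k-p_l}-\sum_k A_{l,k}e^{p_l-p_k}\bigr)$. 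Summing against $w$ makes the Kronecker terms contribute $w_i-w_j$, so that
\[
 g_{i,j}=\frac{(2\alpha-1)\,e^{p_i-p_j}}{D}\Bigl(w_i-w_j+\sum_l w_l B_l\Bigr)\enspace.
\]

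The next step is a one-line index manipulation showing $\sum_l w_l B_l=B$, with $B$ as in the statement. Indeed $D\sum_l w_l B_l=\sum_{k,l}w_l A_{k,l}e^{p_k-p_l}-\sum_{k,l}w_l A_{l,k}e^{p_l-p_k}$, and swapping the dummy indices $k\leftrightarrow l$ in the second double sum rewrites it as $\sum_{k,l}w_k A_{k,l}e^{p_k-p_l}$; thus $D\sum_l w_l B_l=\sum_{k,l}A_{k,l}e^{p_k-p_l}w_l-\sum_{k,l}w_k A_{k,l}e^{p_k-p_l}$, i.e.\ $\sum_l w_l B_l=B$. Hence $g_{i,j}=\frac{(2\alpha-1)\,e^{p_i-p_j}}{D}\,(w_i-w_j+B)$.

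To finish, I would use $\alpha\in(1/2,1)$, so $2\alpha-1>0$, together with $e^{p_i-p_j}>0$ and $D>0$: the prefactor is strictly positive, so $g_{i,j}$ has the sign of $w_i-w_j+B$. Therefore $w_j>w_i+B$ gives $g_{i,j}<0$, which forces $A_{i,j}=1$, and $w_j<w_i+B$ gives $g_{i,j}>0$, which forces $A_{i,j}=0$ (when $w_j=w_i+B$ the derivative vanishes and the weight is unconstrained, exactly as in the corresponding HITS remark). The only place needing real care is the index bookkeeping identifying $\sum_l w_l B_l$ with $B$; note that, unlike for HITS, there is no degenerate case coming from a page with no out-link, since $e^{p_i-p_j}$ and $D$ are strictly positive irrespective of the out-degree of~$i$.
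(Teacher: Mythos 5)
Your proposal is correct and follows the same route as the paper: reduce to the sign of $g_{i,j}$ via the box-constraint first-order conditions (exactly as in Proposition~\ref{prop:shapeHits}), then carry out the ``simple development'' of $c_{i,j}^l$ to obtain $g_{i,j}=\frac{(2\alpha-1)e^{p_i-p_j}}{\sum_{k,l}A_{k,l}e^{p_k-p_l}}(w_i-w_j+B)$, which is precisely the formula the paper's proof asserts. The only difference is that you spell out the index swap identifying $\sum_l w_l B_l$ with $B$, which the paper leaves implicit.
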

\begin{proof}
A simple development of $c_{i,j}^l$ in Proposition~\ref{prop:hotsDer}, 
shows that the derivative of the objective is
given by $g_{i,j}=\frac{2\alpha-1}{\sum_{k,l}A_{k,l}e^{p_k - p_l}}e^{p_i-p_j}(w_i-w_j+B)$.
 
The result follows from the fact that the problem has only box constraints.
Indeed, a nonzero value of the derivative at the maximum
determines whether the upper bound is saturated ($g_{i,j}<0$) or the lower bound is
saturated ($g_{i,j}>0$). If the derivative is zero, then the weight of the link can take
any value.
\end{proof}

\begin{remark}
 This proposition shows that $w$ gives a total order of preference in pointing to a page or another. 
\end{remark}

\subsection{Example}

We take the same web site as in Figures~\ref{fig:GraphOptHits1} and~\ref{fig:GraphOptHits2} with the same admissible actions.
We choose the objective function $f(p)=\sum_{i\in I} \exp(p_i)$
and the normalization $N(u)=\log(\sum_{i\in I} \exp(p_i))=0$.
The initial value of the objective is 0.142
and we present a local solution with value 0.169 on Figure~\ref{fig:GraphOptHots}.

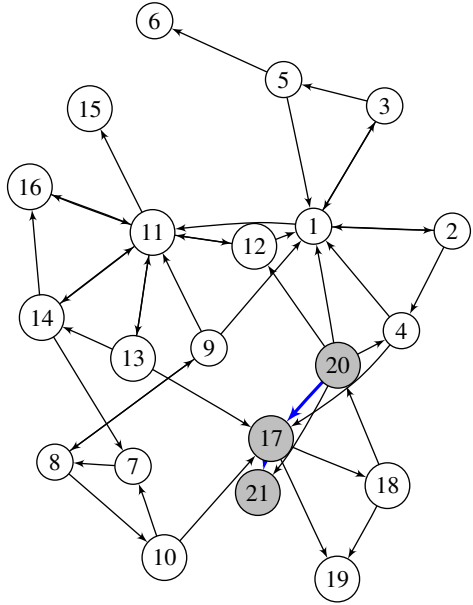
\begin{figure} 
\centering
\small{
\begin{tikzpicture}[>=latex', scale=0.26]
\pgfsetlinewidth{0.5bp}
\pgfsetcolor{black}
  \draw [->] (209bp,336bp) .. controls (244bp,316bp) and (304bp,281bp)  .. (351bp,252bp);
  \draw [->] (461bp,313bp) .. controls (447bp,286bp) and (425bp,245bp)  .. (403bp,212bp) .. controls (398bp,205bp) and (393bp,198bp)  .. (381bp,183bp);
  \draw [->] (531bp,139bp) .. controls (521bp,119bp) and (506bp,93bp)  .. (489bp,62bp);
  \draw [->] (469bp,374bp) .. controls (463bp,411bp) and (453bp,470bp)  .. (444bp,517bp);
  \draw [->] (183bp,514bp) .. controls (157bp,493bp) and (114bp,460bp)  .. (76bp,431bp);
  \draw [->] (89bp,185bp) .. controls (116bp,162bp) and (163bp,120bp)  .. (202bp,86bp);
  \draw [->] (549bp,412bp) .. controls (527bp,439bp) and (488bp,485bp)  .. (457bp,523bp);
  \draw [->] (466bp,542bp) .. controls (501bp,541bp) and (563bp,538bp)  .. (613bp,537bp);
  \draw [->] (279bp,391bp) .. controls (265bp,418bp) and (243bp,463bp)  .. (223bp,505bp);
  \draw [->] (152bp,365bp) .. controls (133bp,374bp) and (109bp,385bp)  .. (79bp,398bp);
  \draw [->] (63bp,589bp) .. controls (93bp,578bp) and (138bp,562bp)  .. (179bp,545bp);
  \draw [->] (393bp,206bp) .. controls (410bp,171bp) and (437bp,111bp)  .. (460bp,63bp);
  \draw [->] (90bp,218bp) .. controls (130bp,247bp) and (216bp,312bp)  .. (269bp,351bp);
  \draw [->] (307bp,387bp) .. controls (334bp,419bp) and (386bp,480bp)  .. (423bp,523bp);
  \draw [->] (269bp,351bp) .. controls (229bp,322bp) and (143bp,257bp)  .. (90bp,218bp);
  \draw [->] (186bp,384bp) .. controls (191bp,414bp) and (197bp,459bp)  .. (204bp,502bp);
  \draw [->] (386bp,524bp) .. controls (393bp,527bp) and (399bp,529bp)  .. (415bp,535bp);
  \draw [->] (204bp,502bp) .. controls (199bp,472bp) and (193bp,427bp)  .. (186bp,384bp);
  \draw [->] (627bp,513bp) .. controls (615bp,489bp) and (596bp,451bp)  .. (578bp,416bp);
  \draw [->] (409bp,224bp) .. controls (436bp,212bp) and (477bp,196bp)  .. (516bp,180bp);
  \draw [->] (456bp,369bp) .. controls (436bp,398bp) and (402bp,446bp)  .. (374bp,487bp);
  \draw [->] (323bp,518bp) .. controls (302bp,521bp) and (274bp,525bp)  .. (241bp,530bp);
  \draw [->] (47bp,443bp) .. controls (44bp,475bp) and (40bp,523bp)  .. (36bp,567bp);
  \draw [->] (155bp,197bp) .. controls (140bp,198bp) and (121bp,199bp)  .. (95bp,201bp);
  \draw [->] (179bp,545bp) .. controls (149bp,556bp) and (104bp,572bp)  .. (63bp,589bp);
  \draw [->] (67bp,383bp) .. controls (91bp,343bp) and (136bp,269bp)  .. (167bp,218bp);
  \draw [->] (414bp,546bp) .. controls (397bp,547bp) and (375bp,549bp)  .. (355bp,548bp) .. controls (320bp,547bp) and (280bp,543bp)  .. (241bp,539bp);
  \draw [->] (248bp,88bp) .. controls (275bp,119bp) and (321bp,171bp)  .. (357bp,212bp);
  \draw [->] (534bp,198bp) .. controls (522bp,227bp) and (504bp,271bp)  .. (487bp,312bp);
  \draw [->] (550bp,372bp) .. controls (537bp,356bp) and (518bp,334bp)  .. (499bp,318bp) .. controls (473bp,296bp) and (440bp,273bp)  .. (407bp,253bp);
  \draw [->] (241bp,530bp) .. controls (262bp,527bp) and (290bp,523bp)  .. (323bp,518bp);
  \pgfsetcolor{blue}
  \draw [->,very thick] (371bp,204bp) .. controls (371bp,203bp) and (370bp,201bp)  .. (368bp,190bp);
  \pgfsetcolor{black}
  \draw [->] (373bp,766bp) .. controls (340bp,781bp) and (282bp,808bp)  .. (236bp,829bp);
  \draw [->] (215bp,95bp) .. controls (208bp,115bp) and (200bp,141bp)  .. (189bp,171bp);
  \pgfsetcolor{blue}
  \draw [->,very thick] (453bp,318bp) .. controls (439bp,303bp) and (422bp,284bp)  .. (401bp,260bp);
  \pgfsetcolor{black}
  \draw [->] (613bp,537bp) .. controls (578bp,538bp) and (516bp,541bp)  .. (466bp,542bp);
  \draw [->] (453bp,566bp) .. controls (471bp,596bp) and (503bp,651bp)  .. (529bp,695bp);
  \draw [->] (529bp,694bp) .. controls (511bp,664bp) and (479bp,609bp)  .. (453bp,565bp);
  \draw [->] (503bp,358bp) .. controls (513bp,363bp) and (524bp,369bp)  .. (543bp,379bp);
  \draw [->] (76bp,431bp) .. controls (102bp,452bp) and (145bp,485bp)  .. (183bp,514bp);
  \draw [->] (194bp,563bp) .. controls (179bp,593bp) and (155bp,641bp)  .. (134bp,684bp);
  \draw [->] (517bp,724bp) .. controls (493bp,730bp) and (458bp,739bp)  .. (422bp,748bp);
  \draw [->] (402bp,729bp) .. controls (410bp,692bp) and (425bp,621bp)  .. (435bp,569bp);
\begin{scope}
  \pgfsetstrokecolor{black}
  \draw (209bp,534bp) ellipse (32bp and 33bp);
  \draw (209bp,534bp) node {11};
\end{scope}
\begin{scope}
  \pgfsetstrokecolor{black}
  \draw (226bp,64bp) ellipse (32bp and 33bp);
  \draw (226bp,64bp) node {10};
\end{scope}
\begin{scope}
  \pgfsetstrokecolor{black}
  \draw (181bp,352bp) ellipse (32bp and 33bp);
  \draw (181bp,352bp) node {13};
\end{scope}
\begin{scope}
  \pgfsetstrokecolor{black}
  \draw (355bp,514bp) ellipse (32bp and 33bp);
  \draw (355bp,514bp) node {12};
\end{scope}
\begin{scope}
  \pgfsetstrokecolor{black}
  \pgfsetfillcolor{lightgray}
  \filldraw (475bp,342bp) ellipse (32bp and 33bp);
  \draw (475bp,342bp) node {20};
\end{scope}
\begin{scope}
  \pgfsetstrokecolor{black}
  \pgfsetfillcolor{lightgray}
  \filldraw (360bp,158bp) ellipse (32bp and 33bp);
  \draw (360bp,158bp) node {21};
\end{scope}
\begin{scope}
  \pgfsetstrokecolor{black}
  \pgfsetfillcolor{lightgray}
  \filldraw (379bp,236bp) ellipse (32bp and 33bp);
  \draw (379bp,236bp) node {17};
\end{scope}
\begin{scope}
  \pgfsetstrokecolor{black}
  \draw (33bp,600bp) ellipse (32bp and 33bp);
  \draw (33bp,600bp) node {16};
\end{scope}
\begin{scope}
  \pgfsetstrokecolor{black}
  \draw (474bp,33bp) ellipse (32bp and 33bp);
  \draw (474bp,33bp) node {19};
\end{scope}
\begin{scope}
  \pgfsetstrokecolor{black}
  \draw (546bp,168bp) ellipse (32bp and 33bp);
  \draw (546bp,168bp) node {18};
\end{scope}
\begin{scope}
  \pgfsetstrokecolor{black}
  \draw (119bp,713bp) ellipse (32bp and 33bp);
  \draw (119bp,713bp) node {15};
\end{scope}
\begin{scope}
  \pgfsetstrokecolor{black}
  \draw (440bp,543bp) ellipse (26bp and 26bp);
  \draw (440bp,543bp) node {1};
\end{scope}
\begin{scope}
  \pgfsetstrokecolor{black}
  \draw (542bp,717bp) ellipse (26bp and 26bp);
  \draw (542bp,717bp) node {3};
\end{scope}
\begin{scope}
  \pgfsetstrokecolor{black}
  \draw (639bp,536bp) ellipse (26bp and 26bp);
  \draw (639bp,536bp) node {2};
\end{scope}
\begin{scope}
  \pgfsetstrokecolor{black}
  \draw (397bp,755bp) ellipse (26bp and 26bp);
  \draw (397bp,755bp) node {5};
\end{scope}
\begin{scope}
  \pgfsetstrokecolor{black}
  \draw (566bp,392bp) ellipse (26bp and 26bp);
  \draw (566bp,392bp) node {4};
\end{scope}
\begin{scope}
  \pgfsetstrokecolor{black}
  \draw (181bp,196bp) ellipse (26bp and 26bp);
  \draw (181bp,196bp) node {7};
\end{scope}
\begin{scope}
  \pgfsetstrokecolor{black}
  \draw (212bp,840bp) ellipse (26bp and 26bp);
  \draw (212bp,840bp) node {6};
\end{scope}
\begin{scope}
  \pgfsetstrokecolor{black}
  \draw (290bp,367bp) ellipse (26bp and 26bp);
  \draw (290bp,367bp) node {9};
\end{scope}
\begin{scope}
  \pgfsetstrokecolor{black}
  \draw (69bp,202bp) ellipse (26bp and 26bp);
  \draw (69bp,202bp) node {8};
\end{scope}
\begin{scope}
  \pgfsetstrokecolor{black}
  \draw (50bp,411bp) ellipse (32bp and 33bp);
  \draw (50bp,411bp) node {14};
\end{scope} 
\end{tikzpicture}
}
\caption{Strict local maximum for HOTS optimization on 
the~small web graph of Figure~\ref{fig:GraphOptHits1}. 
The locally optimal solution for the problem of maximizing $f(u)=\sum_{i\in I} \exp(p_i)$ presented here is to select the bold arcs with weight~$1$.
If one replaces the arc from 20 to 17 by the arc from 17 to 20
one gets another strict local optimal solution but with a smaller value (0.166 instead of 0.169).
This shows that the problem is not quasi-concave.}\label{fig:GraphOptHots}
\end{figure}

\section{Numerical results} \label{sec:num}

By performing a~crawl of our laboratory website and its surrounding pages with 1,500~pages,
we obtained a~fragment of the web graph. We have
selected 49~pages representing a website~$I$.
We set $r_i=1$ if $i \in I$ and $r_i=0$ otherwise.
The set of obligatory links were the initial links
already present at time of the crawl, the facultative links
are all other links from controlled pages except self-links.

We launched our numerical experiments on a~personal computer with Intel Xeon CPU at 2.98~Ghz
and 8~GB RAM. We wrote the code in Matlab language.
We refer to~\cite{Fercoq-PRopt} for numerical experiments for PageRank optimization.

\subsection{HITS authority optimization}

As in Section~\ref{sec:HITS}, we maximize the sum of HITS authority scores on the web site,
that is we maximize $f(u)=\sum_{i \in I} r_i u_i^2$ under the normalization
$N(u)=(\sum_{i \in [n]} r_i u_i^2)^{1/2}=1$.

We use the coupled power and grandient iterations described in Section~\ref{sec:approxGradient}.
We show the progress of the objective on Figure~\ref{fig:hits1500} and we compare 
coupled power and grandient iterations with classical gradient in 
Table \ref{tab:hits1500}.

The best strategy of links found has lots of binary values:
only 4569 values different from~1 among 11,516 nonnegative controled values.
Moreover, the heuristic described in Section~\ref{sec:HITS}
gives a 0-1 matrix with a value at 0.07\% from the weighted local optimum found.
It consists in adding all possible links between controlled pages (internal-links)
and some external links. Following Proposition~\ref{prop:shapeHits},
as the controlled pages share many facultative outlinks,
we can identify growing sequences in the sets of activated outlinks.


\begin{figure} 
\centering
\includegraphics[width=30em]{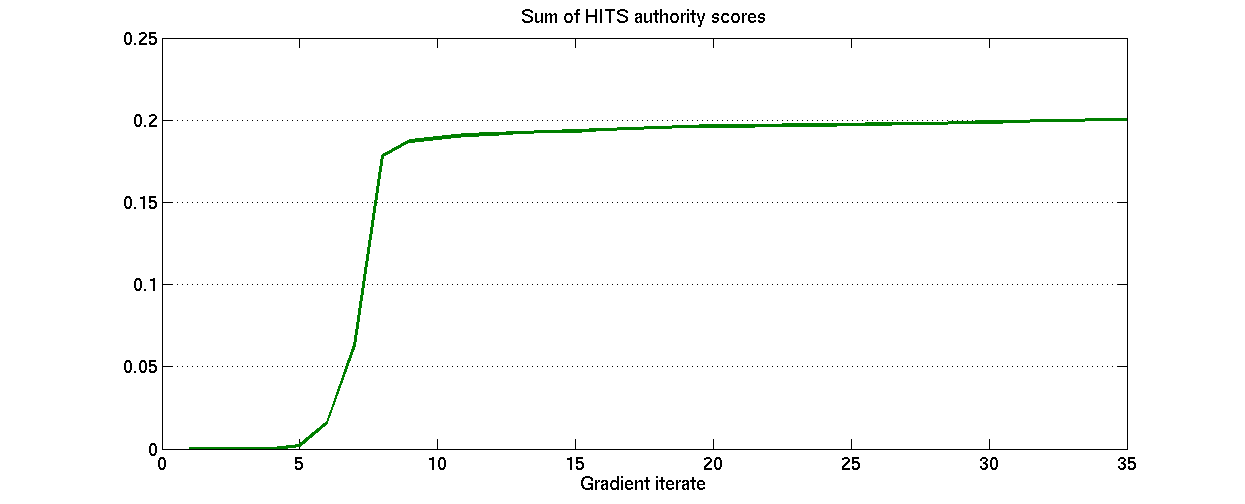}
\caption{Optimization of the sum of HITS authority scores. 
The figure shows that the objective is
increasing during the course of the algorithm. The sum of authority values jumps from 3.5e-6 to 
0.19. However, despite this big progress at the beginning,
 convergence is slow. This is a typical situation with first order methods and nonconvex optimization. 
}\label{fig:hits1500}
\end{figure}

\begin{table}
\centering
\begin{tabular}{|l|c|c|c|}
\hline 
	    & Matrix assemblings & Power iterations & Time \\
\hline
Gradient (Equation \eqref{eqn:wSys}) & 545 & - & 304 s \\
\hline
Gradient (Remark~\ref{rem:grad}) & 324 & 56,239 & 67 s\\
\hline
Coupled iterations (Th.~\ref{thm:approxGrad}) & 589 & 14,289 & 15 s \\
\hline
\end{tabular}
 \caption{Comparison of gradient algorithm with the evaluation of the gradient done by 
direct resolution of Equation \eqref{eqn:wSys} by Matlab ``mrdivide'' function,
gradient algorithm with hot started power iterations described in Remark~\ref{rem:grad}
 (precision $10^{-9}$) and coupled gradient and power iterations. 
The goal was to reach the value 0.22 on our laboratory dataset (the best value we found was 0.2285). 
For this problem, coupling the power and gradient iterations
makes a speedup of more than four.} \label{tab:hits1500}
\end{table}


\subsection{HOTS optimization}

Here, we cannot use the coupled power and gradient iterations
since we do not have any effective bound for the distance
between the actual iterate and the true HOTS vector.
We thus solve the HOTS optimization problem with classical gradient.
We computed the derivative by assuming the conjecture at
the end of Section~\ref{sec:HOTSder}. Indeed, the second
eigenvalue of $\nabla^2 \theta$ is small while
the second eigenvalue of $\diag(d) \nabla^2 \theta$ is larger.
We also never encountered a case where the largest eigenvalue of
$\diag(d) \nabla^2 \theta$ is bigger than 4.

We consider the same website as for the numerical 
experiments for HITS authority.
We take as objective function $f(p)=\sum_{i\in I} \exp(p_i)$ under the normalization
$N(u)=\log(\sum_{i \in [n]} \exp(p_i))=0$.

Here, a stationnary point is reached after 55 gradient iterations
and 8.2 s.
The initial value of the objective was 0.0198
and the final value returned by the optimization algorithm was 0.0567.
We give a local optimal adjacency matrix on Figure~\ref{fig:optimHOTS1500}.

\begin{figure}
 \centering
\includegraphics[width=17em]{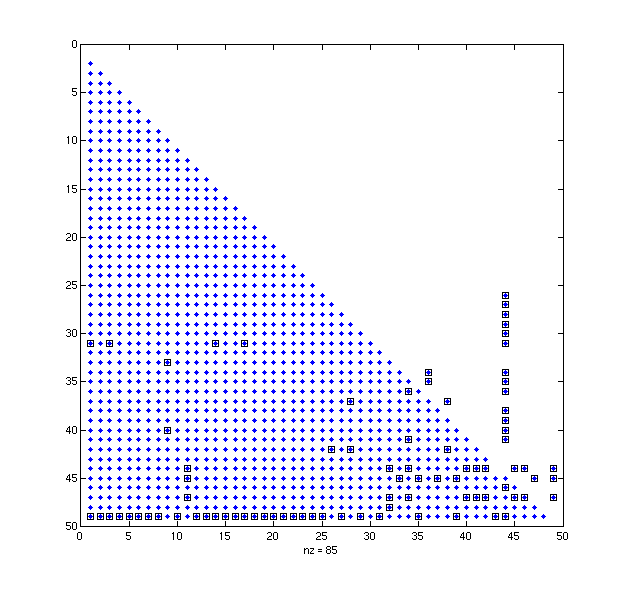}
 \caption{Strict local optimal solution of the HOTS optimization problem. 
We present the adjacency matrix restricted to the set of controlled pages. 
There are no facultative external links added. We sorted pages by their $w$ value. 
The rounded black dots are the obligatory links and the blue dots are the 
activated facultative links. We can see that the higher the $w$ value, 
the bigger the number of controlled pages pointing to that page and 
the smaller the number of pages that page points to (Proposition~\ref{prop:HotsOptStrats}).
It is worth noting that this local solution of the relaxed problem has only binary
weights.
 }
\label{fig:optimHOTS1500}
\end{figure}

\subsection{Scalability of the algorithms}

\begin{table}[tbph]
\begin{tabular}{|l|c|c|c|c|} 
\hline 
	 			 	           &  HITS (CMAP) &  HOTS (CMAP) & HITS (NZU) & HOTS (NZU) \\
\hline
Gradient (Eq. \eqref{eqn:wSys})		      &  1.01 s/it & 0.82 s/it     & out of memory & out of memory \\
\hline
Gradient (Rem.~\ref{rem:grad})    		      &  0.23 s/it & 0.12 s/it     & 20 s/it & 62 s/it \\
\hline
Coupled iter. (Th.~\ref{thm:approxGrad})        &  0.038 s/it & 0.04 s/it   & 1.0 s/it & 2.9 s/it \\
\hline
\end{tabular}
 \caption{Mean execution times by gradient iterations for HITS authority and HOTS optimization for 2 fragments of the web graph:
our laboratory web site and surrounding web pages (CMAP dataset, 1,500 pages) and New Zealand Universities websites (NZU dataset, 413,639 pages).
The execution time with a direct resolution of Equation~\eqref{eqn:wSys} by Matlab ``mrdivide''
 function becomes very memory consuming when the size of the problem grows.
It is still acceptable with matrices of size 1,500 but fails for larger matrices.
The other two algorithms scale well. For, HOTS optimization problems,
as they are not Perron vector optimization problems, Theorem~\ref{thm:approxGrad} does not apply.
However, we have implemented the algorithm anyway.
The execution time mainly depends 
on the spectral gap of the matrix, which is similar in the four cases and on the cost by
matrix-vector product, which is growing only linearly thanks to the sparsity of the matrix. 
The number of gradient iterations is in general not dependent on the size of the problem: 
in our experiments for HITS authority optimization, there were even less gradient iterations with the larger dataset.
When the coupled iterations is available, it gives a speedup between 3 and 20.
} \label{tab:hitsBench}
\end{table}

We launched our algorithms on two test sets and we give the execution times on Table~\ref{tab:hitsBench}.
Our laboratory's website is described at the beginning of this section.
The crawl of New Zealand Universities websites is available at~\cite{NZ2006}. 
We selected the 1,696 pages containing the keyword ``maori'' in their url
and 1,807 possible destination pages for the facultative hyperlinks,
which yields 3,048,798 facultative hyperlinks.
In both cases, we maximize the sum of the scores of the controlled pages.

We remark that for a problem more that 300 times bigger,
the computational cost does not increase that much.
Indeed, the spectral gap is similar and the cost by
matrix-vector product is growing only linearly thanks to the sparsity of the matrix.

\bibliographystyle{alpha}
\bibliography{perronOpt,pagerank}

\newcommand{\etalchar}[1]{$^{#1}$}
\begin{thebibliography}{dKNvD08}

\bibitem[AL06]{AvrLit-OptStrat}
Konstantin Avrachenkov and Nelly Litvak.
\newblock The {E}ffect of {N}ew {L}inks on {G}oogle {P}age{R}ank.
\newblock {\em Stochastic Models}, 22(2):319--331, 2006.

\bibitem[BCF{\etalchar{+}}11]{BiCFerGLOu-short}
Fr\'ed\'erique Billy, Jean Clairambault, Olivier Fercoq, St\'ephane Gaubert,
  Thomas Lepoutre, and Thomas Ouillon.
\newblock Proliferation in cell population models with age structure.
\newblock In {\em Proceedings of ICNAAM 2011, Kallithea Chalkidis (Greece)},
  pages 1212--1215. American Institute of Physics, 2011.

\bibitem[Ber76]{Ber-gradientAlg}
Dimitri~P. Bertsekas.
\newblock On the {G}oldstein - {L}evitin - {P}olyak gradient projection method.
\newblock {\em IEEE transactions on automatic control}, AC-21(2):174--184,
  1976.

\bibitem[Ber95]{Bert-nonlinear}
Dimitri~P. Bertsekas.
\newblock {\em Nonlinear Programming}.
\newblock Athena Scientific, 1995.

\bibitem[BGLS06]{BGLS-numericalOpt}
J.F. Bonnans, J.Ch. Gilbert, C.~Lemar\'echal, and C.~Sagastiz\'abal.
\newblock {\em Numerical Optimization -- Theoretical and Practical Aspects}.
\newblock Universitext. Springer Verlag, Berlin, 2006.

\bibitem[BP94]{BermanPlemmons-nonnegMat}
Abraham Berman and Robert~J. Plemmons.
\newblock {\em Nonnegative Matrices in the Mathematical Sciences}.
\newblock Number~9 in Classics in Applied Mathematics. Cambrige University
  Press, 1994.

\bibitem[BP98]{Brin-Anatomy}
Serguey Brin and Larry Page.
\newblock The anatomy of a large-scale hypertextual web search engine.
\newblock {\em Computer Networks and ISDN Systems}, 30(1-7):107--117, 1998.
\newblock Proc. 17th International World Wide Web Conference.

\bibitem[BS06]{Bazaraa-Nonlinopt}
Mokhtar~S. Bazaraa and Hanif~D Sherali.
\newblock {\em Nonlinear Programming: Theory and Algorithms, 3rd ed}.
\newblock Jon Wiley, New York, 2006.

\bibitem[BS07]{Boche-perronvalueOpt}
H.~Boche and M.~Schubert.
\newblock Multiuser interference balancing for general interference functions -
  a convergence analysis.
\newblock In {\em IEEE International Conference on Communications}, pages
  4664--4669, 2007.

\bibitem[BT00]{BlondelTsitsiklis}
Vincent~D. Blondel and John~N. Tsitsiklis.
\newblock A survey of computational complexity results in systems and control.
\newblock {\em Automatica}, 36(9):1249--1274, 2000.

\bibitem[BTN01]{Nem-modernConvOpt}
Aharon Ben-Tal and Arkadi Nemirovski.
\newblock {\em Lectures on Modern Convex Optimization, Analysis, Algorithms and
  Engineering Applications}.
\newblock MPS/SIAM Series on Optimization. SIAM, 2001.

\bibitem[CDW75]{cullum-eigenValueOpt}
Jane Cullum, W.~E. Donath, and P.~Wolfe.
\newblock The minimization of certain nondifferentiable sums of eigenvalues of
  symmetric matrices.
\newblock In {\em Nondifferentiable Optimization}, volume~3 of {\em
  Mathematical Programming Studies}, pages 35--55. Springer Berlin Heidelberg,
  1975.

\bibitem[CJB10]{Blondel-PRopt}
Bal{\'a}zs~Csan{\'a}d Cs{\'a}ji, Rapha{\"e}l~M. Jungers, and Vincent~D.
  Blondel.
\newblock Pagerank optimization in polynomial time by stochastic shortest path
  reformulation.
\newblock In {\em Proc. 21st International Conference on Algorithmic Learning
  Theory}, volume 6331 of {\em Lecture Notes in Computer Science}, pages
  89--103. Springer, 2010.

\bibitem[dKNvD08]{NinKer-PRopt}
Christobald de~Kerchove, Laure Ninove, and Paul van Dooren.
\newblock Maximizing pagerank via outlinks.
\newblock {\em Linear Algebra and its Applications}, 429(5-6):1254--1276, 2008.

\bibitem[DN84]{DeuNeu-PerronRootDer}
Emeric Deutsch and Michael Neumann.
\newblock Derivatives of the perron root at an essentially nonnegative matrix
  and the group inverse of an m-matrix.
\newblock {\em Journal of Mathematical Analysis and Applications},
  102(1):1--29, 1984.

\bibitem[DN85]{DeuNeu-PerronVectorDer}
Emeric Deutsch and Michael Neumann.
\newblock On the first and second order derivatives of the perron vector.
\newblock {\em Linear algebra and its applications}, 71:57--76, 1985.

\bibitem[FABG10]{Fercoq-PRopt}
Olivier Fercoq, Marianne Akian, Mustapha Bouhtou, and St\'ephane Gaubert.
\newblock Ergodic control and polyhedral approaches to {P}agerank optimization.
\newblock {\em arXiv preprint:1011.2348}, 2010.

\bibitem[GB82]{Ber-gradientAlgImproved}
Eli~M Gafni and Dimitri~P. Bertsekas.
\newblock Convergence of a gradient projection method.
\newblock Technical report, LIDS, MIT, 1982.

\bibitem[GLS88]{Lovasz-geomAlgo}
Martin Groetschel, L\'aszl\'o Lov\'asz, and Alexander Schrijver.
\newblock {\em Geometric Algorithms and Combinatorial Optimization}, volume~2
  of {\em Algorithms and Combinatorics}.
\newblock Springer-Verlag, 1988.

\bibitem[HA89]{Hafta-sensitivity}
R.~T. Haftka and H.~M. Adelman.
\newblock Recent developments in structural sensitivity analysis.
\newblock {\em Structural and Multidisciplinary Optimization}, 1:137--151,
  1989.

\bibitem[HJ90]{HornJohnson-MatrixAnalysis}
R.~A. Horn and C.~R. Johnson.
\newblock {\em Matrix analysis}.
\newblock Cambridge University Press, 1990.
\newblock Corrected reprint of the 1985 original.

\bibitem[IT09]{IshiiTempo-FragileDataPRcomp}
Hideaki Ishii and Roberto Tempo.
\newblock Computing the pagerank variation for fragile web data.
\newblock {\em SICE J. of Control, Measurement, and System Integration},
  2(1):1--9, 2009.

\bibitem[Kat66]{Kato}
Tosio Kato.
\newblock {\em Perturbation Theory for Linear Operators}.
\newblock Springer-Verlag Berlin and Heidelberg GmbH \& Co. K, 1966.

\bibitem[Kee93]{Keener-FootbalTeams}
James~P. Keener.
\newblock The perron-frobenius theorem and the ranking of football teams.
\newblock {\em SIAM Review}, 35(1):80--93, 1993.

\bibitem[Kin61]{Kingman-logconv}
JFC Kingman.
\newblock A convexity property of positive matrices.
\newblock {\em The Quarterly Journal of Mathematics}, 12(1):283--284, 1961.

\bibitem[Kle99]{Kleinberg-HITS}
Jon Kleinberg.
\newblock Authoritative sources in a hyperlinked environment.
\newblock {\em Journal of the ACM}, 46:604--632, 1999.

\bibitem[LM00]{Lempel-Salsa}
R.~Lempel and S.~Moran.
\newblock The stochastic approach for link-structure analysis ({SALSA}) and the
  {TKC} effect.
\newblock {\em Computer Networks}, 33(1-6):387--401, 2000.

\bibitem[LM05]{LangvilleMeyer-SurveyEigenvectorMethods}
Amy~N. Langville and Carl~D. Meyer.
\newblock A survey of eigenvector methods for web information retrieval.
\newblock {\em SIAM Review}, 47(1):135--161, 2005.

\bibitem[LM06]{LanMey-Beyond}
Amy~N. Langville and Carl~D. Meyer.
\newblock {\em Google's {P}age{R}ank and beyond: the science of search engine
  rankings}.
\newblock Princeton University Press, 2006.

\bibitem[LO96]{LewisOverton-eigenvalueOpt}
Adrian~S. Lewis and Michael~L. Overton.
\newblock Eigenvalue optimization.
\newblock {\em Acta Numerica}, 5:149--190, 1996.

\bibitem[Log08]{Logofet-calibration}
Dmitrii~O. Logofet.
\newblock Convexity in projection matrices: Projection to a calibration
  problem.
\newblock {\em Ecological Modelling}, 216(2):217--228, 2008.

\bibitem[Mat96]{Matsui-NPhardMultOpt}
Tomomi Matsui.
\newblock {NP}-hardness of linear multiplicative programming and related
  problems.
\newblock {\em Journal of Global Optimization}, 9:113--119, 1996.

\bibitem[May94]{mayer-eigenVerification}
G.~Mayer.
\newblock Result veriﬁcation for eigenvectors and eigenvalues.
\newblock In {\em Proceedings of the IMACS-GAMM international workshop}, pages
  209--276. Elsevier, 1994.

\bibitem[MS88]{Meyer-eigenvectorDer}
Carl~D. Meyer and G.~W. Stewart.
\newblock Derivatives and perturbations of eigenvectors.
\newblock {\em SIAM J. on Numerical Analysis}, 25(3):679--691, 1988.

\bibitem[MV06]{Viennot-2006}
Fabien Mathieu and Laurent Viennot.
\newblock Local aspects of the global ranking of web pages.
\newblock In {\em 6th International Workshop on Innovative Internet Community
  Systems (I2CS)}, pages 1--10, Neuch{\^a}tel, 2006.

\bibitem[Nel76]{Nelson-eigenvectorDer}
R~B Nelson.
\newblock Simplified calculation of eigenvector derivatives.
\newblock {\em AIAA Journal}, 14:1201--1205, 1976.

\bibitem[NW99]{Nocedal-numericalOpt}
Jorge Nocedal and Stephen~J. Wright.
\newblock {\em Numerical Optimization}.
\newblock Nueva York, EUA : Springer, 1999.

\bibitem[Ost55]{Ostrowski}
Alexander Ostrowski.
\newblock \"uber normen von matrizen.
\newblock {\em Mathematische Zeitschrift}, 63:2--18, 1955.

\bibitem[Ove91]{Overton-largeScale}
Michael~L. Overton.
\newblock Large-scale optimization of eigenvalues.
\newblock {\em SIAM J. on Optimization}, 2(1):88--120, 1991.

\bibitem[OW88]{Overton-nonsymEigenValueOpt}
Michael~L. Overton and Robert~S. Womersley.
\newblock On minimizing the spectral radius of a nonsymmetric matrix function:
  optimality conditions and duality theory.
\newblock {\em SIAM J. on Matrix Analysis and Applications}, 9:473--498, 1988.

\bibitem[Pol97]{Polak-consistent}
Elijah Polak.
\newblock {\em Optimization: algorithms and consistent approximations}.
\newblock Springer-Verlag New York, 1997.

\bibitem[PP73]{ParlettPoole-power}
B.~N. Parlett and Jr. Poole, W.~G.
\newblock A geometric theory for the {QR}, {LU} and power iterations.
\newblock {\em SIAM J. on Numerical Analysis}, 10(2):389--412, 1973.

\bibitem[PP02]{Pironneau-ApproxGrad}
Olivier Pironneau and Elijah Polak.
\newblock Consistent approximations and approximate functions and gradients in
  optimal control.
\newblock {\em SIAM J. Control Optim.}, 41:487--510, 2002.

\bibitem[Pro06]{NZ2006}
Academic Web Link~Database Project.
\newblock New zealand university web sites, January 2006.

\bibitem[RW82]{Rothblum1982}
Uriel~G. Rothblum and Peter Whittle.
\newblock Growth optimality for branching markov decision chains.
\newblock {\em Mathematics of Operations Research}, 7(4):582--601, 1982.

\bibitem[Saa87]{Saaty-PerronRank}
Thomas~L. Saaty.
\newblock Rank according to perron: A new insight.
\newblock {\em Mathematics Magazine}, 60(4):211--213, 1987.

\bibitem[Sch90]{Schneider-scaling}
M.H. Schneider.
\newblock Matrix scaling, entropy minimization and conjugate duality (ii): The
  dual problem.
\newblock {\em Math. Prog.}, 48:103--124, 1990.

\bibitem[SF95]{shapiro-eigenValueOpt}
Alexander Shapiro and Michael K.~H. Fan.
\newblock On eigenvalue optimization.
\newblock {\em SIAM J. on Optimization}, 5(3):552--569, 1995.

\bibitem[Tom03]{Tomlin-HOTS}
John~A. Tomlin.
\newblock A new paradigm for ranking pages on the world wide web.
\newblock In {\em Proc. 12th international conference on World Wide Web}, WWW
  '03, pages 350--355, New York, 2003. ACM.

\bibitem[Vig09]{Vigna-spectralRanking}
Sebastiano Vigna.
\newblock Spectral ranking.
\newblock {\em arXiv preprint:0912.0238}, 2009.

\end{thebibliography}

\end{document}